\theoremstyle{plain}
\newtheorem{thm}{Theorem}
\newtheorem{prop}{Proposition}[section]
\newtheorem{lem}[prop]{Lemma}
\newtheorem{defi}[prop]{Definition}
\newtheorem{rmk}[prop]{Remark}
\newcommand {\R} {\mathbb{R}} 
 \newcommand {\N} {\mathbb{N}}
\newcommand {\C} {\mathbb{C}} 
\newcommand {\Sp} {\mathbb{S}}
\newcommand {\p} {\partial}
\newcommand {\supp} {\text{supp}}
\newcommand {\diam} {\text{diam}}
\newcommand {\conv} {\text{conv}}
\newcommand {\uaux}{u_{\rm aux}}
\newcommand {\chiaux}{\chi_{\rm aux}}
\definecolor{BudGreen}{RGB}{112, 174, 110}
\definecolor{orange}{rgb}{1, 0.5, 0}
\DeclareMathOperator{\Per}{Per}
\DeclareMathOperator {\dist} {dist}
\DeclareMathOperator{\F} {\mathcal{F}}
\DeclareMathOperator{\B} {\mathcal{B}}
\title[Scaling of the Cubic-to-Tetragonal Phase Transformation with Dirichlet Data]{On the Scaling of the Cubic-to-Tetragonal Phase Transformation with Displacement Boundary Conditions}
\author{Angkana Rüland}
\address{Institute for Applied Mathematics and Hausdorff Center for Mathematics, University of Bonn, Endenicher Allee 60, 53115 Bonn, Germany}
\email{rueland@uni-bonn.de}
\author{Antonio Tribuzio}
\address{Institute for Applied Mathematics, University of Bonn, Endenicher Allee 60, 53115 Bonn, Germany}
\email{tribuzio@iam.uni-bonn.de}
\date{}
\begin{document}

\begin{abstract}
We provide (upper and lower) scaling bounds for a singular perturbation model for the cubic-to-tetragonal phase transformation with (partial) displacement boundary data. We illustrate that the \emph{order of lamination} of the affine displacement data determines the complexity of the microstructure. As in \cite{RT21} we heavily exploit careful Fourier space localization methods in distinguishing between the different lamination orders in the data.
\end{abstract}

\maketitle
\tableofcontents
\addtocontents{toc}{\setcounter{tocdepth}{1}}

\section{Introduction}

Motivated by complex microstructures in shape-memory alloys, in this article we study the scaling behaviour of a variational problem of singular perturbation type modelling the cubic-to-tetragonal phase transformation in the low temperature regime with prescribed displacement boundary conditions. The cubic-to-tetragonal phase transformation is a frequently occurring phase transformation in metal alloys such as NiMn and has been investigated in the mathematical literature as a prototypical, vector-valued non-quasiconvex variational problem, displaying complex microstructure \cite{B, Ball:ESOMAT, M1}. Here both qualitative and quantitative rigidity estimates have been deduced \cite{DM2, K, CDK, CO09, CO12, KKO13, KO19,TS21a, TS21b}. Quantitative results have been obtained in settings with periodic \cite{CO09} and nucleation data \cite{KKO13, KO19} as well as in the form of ``interior rigidity estimates'' \cite{CO12}. 

It is the purpose of this note to investigate the setting with \emph{displacement data}, i.e., Dirichlet data, and to illustrate that some of the main findings from \cite{RT21} (which build on the earlier works from \cite{CO09, CO12, KKO13, RT21,KW16} and systematize these in settings without gauge invariances) are also true in settings \emph{with non-trivial gauge groups}, i.e., in particular in settings with $Skew(3)$ invariance, which encodes infinitesimal frame indifference. More precisely, we show that also in the setting of the geometrically linearized cubic-to-tetragonal phase transformation with $Skew(3)$ invariance the relevant quantity determining the scaling behaviour and the complexity of the microstructure in the singular perturbation problem with displacement data is given by the \emph{order of lamination} of the Dirichlet data.

\subsection{The model}
Let us describe our mathematical model for the cubic-to-tetragonal phase transformation in more detail. In the exactly stress-free setting the cubic-to-tetragonal phase transformation corresponds to studying the structure of displacements satisfying the differential inclusion
\begin{equation}\label{eq:diff-incl}
e(\nabla u) \in \left\{\begin{pmatrix}-2&0&0\\0&1&0\\0&0&1\end{pmatrix},\begin{pmatrix}1&0&0\\0&-2&0\\0&0&1\end{pmatrix},\begin{pmatrix}1&0&0\\0&1&0\\0&0&-2\end{pmatrix}\right\}=:K \mbox{ a.e. in } \Omega.
\end{equation}
Here, $\Omega \subset \R^3$ is an open, Lipschitz domain. The matrix $e(M)=\frac{1}{2}(M+M^t)$ denotes the symmetric part of $M\in\R^{3\times 3}$ and the \emph{symmetrized gradient} $e(\nabla u)$ of the \emph{displacement} $u:\Omega \rightarrow \R^3$ physically corresponds to the (infinitesimal) \emph{strain}. The three matrices 
\begin{align*}
e^{(1)}:= \begin{pmatrix}-2&0&0\\0&1&0\\0&0&1\end{pmatrix}, \ e^{(2)}:= \begin{pmatrix}1&0&0\\0&-2&0\\0&0&1\end{pmatrix}, \ e^{(3)}:=\begin{pmatrix}1&0&0\\0&1&0\\0&0&-2\end{pmatrix},
\end{align*}
 model the \emph{variants of martensite}, i.e., the exactly stress-free states of the material in the low-temperature regime.

For later convenience, we note that this differential inclusion can also be expressed in the stress-free setting with the help of characteristic functions $\chi_j\in L^{\infty}(\Omega;\{0,1\})$, $j\in\{1,2,3\}$, with $\chi_1+\chi_2+\chi_3=1$, as follows
\begin{equation}
\label{eq:stress_free2}
\nabla u = \chi:=
\begin{pmatrix}
-2 \chi_1 + \chi_2 + \chi_3 & 0 & 0\\
0& \chi_1 - 2\chi_2 + \chi_3 & 0 \\
0 & 0 & \chi_1 + \chi_2 -2 \chi_3
\end{pmatrix}  \mbox{ a.e. in } \Omega.
\end{equation}
Here the functions $\chi_j $, $j\in\{1,2,3\}$, will be referred to as the \emph{phase indicators} corresponding to the respective variants of martensite. The structure of the set $K$ is well-known \cite{B}: The matrices $e^{(j)}$ are pairwise symmetrized rank-one connected, i.e., adopting the notation from \cite{KKO13}, for each $i, j \in \{1,2,3\}$, $i\neq j$, there exist vectors $b_{ij}, b_{ji} \in \mathbb{S}^2$ such that
\begin{align*}
e^{(i)}- e^{(j)} = 3\epsilon_{ijk}\left( b_{ij} \otimes b_{ji} + b_{ji} \otimes b_{ij} \right).
\end{align*}
Here $\epsilon_{ijk}$ denotes the antisymmetric $\epsilon$-tensor, i.e., 
\begin{align*}
\epsilon_{ijk}:=
\left\{ 
\begin{array}{ll}
1 &\mbox{ if } (i,j,k) \mbox{ is an even permutation of $(1,2,3)$},\\
-1 &\mbox{ if } (i,j,k) \mbox{ is an odd permutation of $(1,2,3)$},\\
0 &\mbox{ else}.
\end{array}
\right.
\end{align*}

Hence, in particular, so-called \emph{simple laminates} or \emph{twins} arise as solutions to \eqref{eq:diff-incl}. These correspond to one-dimensional displacements $u:\Omega \rightarrow \R^3$ of the form
\begin{align*}
u(x):= Ax+f(v \cdot x), \ v \in \{b_{ij},b_{ji}\}, \ i,j\in\{1,2,3\}, \ i\neq j,
\end{align*}
with some $f: \R \rightarrow \R^3$ and such that the infinitesimal strain alternates between the two values $e^{(i)}, e^{(j)}$.
The vectors $b_{ij}$ are given by
\begin{align}
\label{eq:normals}
\begin{split}
&b_{12} = \frac{1}{\sqrt{2}}\begin{pmatrix}
1 \\ 1 \\ 0 \end{pmatrix},\ b_{21} = \frac{1}{\sqrt{2}}\begin{pmatrix}\
-1 \\ 1 \\0 \end{pmatrix}, \ b_{31} = \frac{1}{\sqrt{2}}\begin{pmatrix}\
1 \\ 0 \\1 \end{pmatrix},\\
&b_{13} = \frac{1}{\sqrt{2}}\begin{pmatrix}
1 \\ 0 \\-1 \end{pmatrix}, \ 
b_{23} = \frac{1}{\sqrt{2}}\begin{pmatrix}
0 \\ 1 \\1 \end{pmatrix},\
b_{32} = \frac{1}{\sqrt{2}}\begin{pmatrix}
0 \\ -1 \\ 1 \end{pmatrix}.
\end{split}
\end{align}
Borrowing the notation from \cite{KKO13}, for $i,j,k\in\{1,2,3\}$, $i\neq j$, $j\neq k$ and $i\neq k$, we collect these into the sets 
\begin{align}
\label{eq:Bij}
\mathcal{B}_{ij}:= \{b_{ij}, b_{ji}\} \mbox{ and } \mathcal{B}_i:= \mathcal{B}_{ij}\cup \mathcal{B}_{ik},
\end{align}
consisting of all lamination normals between the strains $e^{(i)}, e^{(j)}$ and all lamination normals involving the martensitic variant $e^{(i)}$, respectively.

Motivated by the formulation \eqref{eq:stress_free2} and using the phase indicator energy models from \cite[Chapter 11]{B} and \cite{CO09, CO12}, we study singularly perturbed energies of the form
\begin{align}\label{eq:total-energy}
\begin{split}
E_{\epsilon}(\chi) &:= \inf_{u\in H^1_0(\Omega;\R^3)} (E_{el}(u,\chi) + \epsilon E_{surf}(\chi)), \\ 
\mbox{with }
E_{el}(u,\chi&):=\int_{\Omega} \big|e(\nabla u)-\chi\big|^2 dx, \ E_{surf}(\chi):=  |D\chi|(\Omega),
\end{split}
\end{align}
with $\chi$ as in \eqref{eq:stress_free2} and with $|D\chi|(\Omega)$ denoting the total variation norm of $D \chi$ in $\Omega$. The choice of the data space $u\in H^1_0(\Omega;\R^3)$ here physically corresponds to the prescription of fixed displacement data given by the \emph{austenite phase}, i.e., the high temperature phase. 
As our main objective, we seek to deduce a scaling law for $E_{\epsilon}(\chi) $ in the small parameter $\epsilon>0$ with these prescribed austenite displacement data and, more generally, with prescribed affine lamination boundary conditions (in the second order lamination convex hull).
We split the discussion of these bounds into two parts: In the next section, we first discuss and present lower bounds whose scaling in $\epsilon$ is determined by the lamination order of the displacement data. In Section \ref{sec:upperintro} we then turn to the upper bounds for which we consider slightly different boundary data (partially displacement, partially periodic) for technical reasons.

\subsection{Lower bounds}
\label{sec:lowerbounds}

We begin by discussing the lower bounds in the setting with austenite displacement boundary conditions. In this setting our main result reads as follows.

\begin{thm}
\label{thm:main_zero}
Let $\Omega \subset \R^3$ be an open, bounded Lipschitz domain.
Let $E_{\epsilon}(\chi)$ be as in \eqref{eq:total-energy}. Then, there exist $\epsilon_0=\epsilon_0(\Omega,K)>0$ and $C=C(\Omega,K)>1$ such that for any $\epsilon \in (0,\epsilon_0)$ it holds
\begin{align*}
C^{-1} \epsilon^{\frac{1}{2}} \leq \inf\limits_{\chi \in BV(\Omega; K)} E_{\epsilon}(\chi).
\end{align*}
\end{thm}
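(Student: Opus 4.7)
The plan is to follow the Fourier-space strategy of \cite{CO09, RT21}. First, I extend $u\in H^1_0(\Omega;\R^3)$ by zero and $\chi$ outside $\bar\Omega$ by a fixed constant matrix, paying at most $C\mathcal{H}^2(\partial\Omega)$ in extra total variation; since $u\equiv 0$ outside $\Omega$ the elastic energy is unchanged. Setting $\eta:=e(\nabla u)-\chi$ on $\R^3$, we have $\|\eta\|_{L^2(\R^3)}^2=E_{el}(u,\chi)$. Since $\widehat{e(\nabla u)}(\xi)=\tfrac{i}{2}(\xi\otimes\hat u+\hat u\otimes\xi)$ lies in the subspace $V(\xi):=\spa\{\xi\otimes v+v\otimes\xi: v\in\C^3\}\subset\mathrm{Sym}(3;\C)$, Plancherel gives the pointwise-in-Fourier elastic bound
\begin{equation*}
\int_{\R^3}|\Pi(\xi)\hat\chi(\xi)|^2\,d\xi \;\le\; E_{el}(u,\chi),
\end{equation*}
where $\Pi(\xi)$ is the orthogonal projection onto $V(\xi)^\perp$. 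The structural fact that each $e^{(j)}$ has full rank (hence cannot be written as $\xi\otimes v+v\otimes\xi$) yields, by compactness, $|\Pi(\xi)e^{(j)}|^2\ge c_0>0$ uniformly in $\xi/|\xi|\in\Sp^2$.

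The surface energy is brought in through the classical BV--Fourier bound $\||\xi|\hat\chi(\xi)\|_{L^\infty}\lesssim|D\chi|(\R^3)$ together with the trivial estimate $\|\hat\chi\|_{L^\infty}\lesssim|\Omega|$. These two bounds, combined via a dyadic decomposition in frequency, provide control of the ``compatible'' component $(I-\Pi(\xi))\hat\chi(\xi)$ (on which the elastic energy gives no direct information). The boundary condition enters via integration by parts: $u\equiv 0$ on $\partial\Omega$ forces $\int_\Omega\chi\,dx=-\int_\Omega\eta\,dx$, of order $\sqrt{|\Omega|E_{el}}$, so when $E_{el}$ is small the three volume fractions must each be approximately $|\Omega|/3$. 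In particular, using the discreteness identities $\chi_j^2=\chi_j$, the quantity $\|\chi\|_{L^2(\Omega)}^2$ is bounded below by a positive constant depending only on $|\Omega|$.

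The key balance---analogous to the Kohn--Müller/Capella--Otto estimate---would take the form $\|\chi\|_{L^2}^2\lesssim \sqrt{E_{el}\cdot E_{surf}}+\text{lower order}$, obtained by splitting the Plancherel identity $\|\chi\|_{L^2}^2=\int|\hat\chi|^2\,d\xi$ at a dyadic scale $R$ and optimizing. Combined with the uniform lower bound $\|\chi\|_{L^2}^2\gtrsim |\Omega|$, this gives $E_{el}\cdot E_{surf}\gtrsim 1$, and the AM--GM inequality yields $E_{el}(u,\chi)+\epsilon E_{surf}(\chi)\ge c\sqrt\epsilon$, uniformly in $u\in H^1_0(\Omega;\R^3)$ and $\chi\in BV(\Omega;K)$.

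The main obstacle is the careful treatment of the ``compatible'' modes, i.e.~the $V(\xi)$-component of $\hat\chi$, on which the elastic energy alone provides no control. Here one must combine the surface energy with the pointwise identities $\chi_j^2=\chi_j$ through a localized Fourier multiplier construction in the spirit of \cite{RT21, CO09}; the present $\mathrm{Skew}(3)$-gauge invariance of the symmetrized gradient is encoded exactly in the three-dimensional subspace $V(\xi)$, so the argument must be adapted to this larger ``compatible'' direction as compared to the gradient case.
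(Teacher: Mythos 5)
Your outline correctly identifies the Fourier-space strategy, the extension of $(u,\chi)$ to $\R^3$, the BV/decay input from the surface energy, and the role of the degenerate elastic multiplier. However, there is a genuine and fatal gap: the claimed interpolation $\|\chi\|_{L^2}^2\lesssim \sqrt{E_{el}\cdot E_{surf}}$ is \emph{false} in this setting, and without it the whole argument collapses to the weaker $\epsilon^{2/3}$ bound. To see this, take boundary data $F=\tfrac12 e^{(1)}+\tfrac12 e^{(2)}\in K^{(1)}$ and a branched simple laminate between $e^{(1)}$ and $e^{(2)}$ of coarse scale $r$: then $\|\chi-F\|_{L^2}^2\sim|\Omega|$ is bounded away from zero, while $E_{el}\sim r^2$ and $E_{surf}\sim r^{-1}$, so $\sqrt{E_{el}E_{surf}}\sim r^{1/2}\to 0$. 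The \emph{correct} one-split Fourier bound (absorbing the low-frequency term $(\mu\mu_2)^2$ and optimizing the remaining $\mu^{-2}E_{el}+\mu_2^{-1}E_{surf}$) yields $\|\tilde\chi\|_{L^2}^2\lesssim E_{el}^{1/3}E_{surf}^{2/3}$, which upon optimizing against $E_{el}+\epsilon E_{surf}$ gives $\epsilon^{2/3}$, i.e.\ exactly the scaling of Theorem~\ref{thm:main_general}(ii) for \emph{first}-order laminate data. A single dyadic frequency split, combined only with the fact that $\|\tilde\chi\|_{L^2}^2\gtrsim 1$, simply cannot distinguish second-order from first-order laminate boundary data.

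The missing ingredient is the trilinear quantity $\int_{\R^3}\tilde\chi_{11}\tilde\chi_{22}\tilde\chi_{33}\,dx$ (borrowed from \cite{KKO13}), which is the structural object that actually encodes second-order lamination. For austenite data each $|\tilde\chi_{jj}|\ge 1$ pointwise, so this integral is bounded below by $c|\Omega|>0$; crucially, in Proposition~\ref{prop:loc2} the paper shows that once each factor is localized to its respective truncated cone $C_{b_j,\mu,[\mu_3,\mu_2]}$, the triple convolution vanishes identically: the three cones around the distinct lamination directions cannot all overlap in Fourier space because the Minkowski sum $C_{b_2,\cdot}+C_{b_3,\cdot}$ stays $2\mu\mu_2$-close to $\mathrm{span}(b_2,b_3)$ while $C_{b_1,\mu,[\mu_3,\mu_2]}$ is strictly further away. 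This is what permits the stronger low-frequency term $(\mu^2\mu_2)^2$ in \eqref{eq:lower_bd1} in place of $(\mu\mu_2)^2$, and it is precisely this extra power of $\mu$ that pushes the optimization from $\epsilon^{2/3}$ to $\epsilon^{1/2}$. There is also a subsidiary mismatch in your proposal: the observation $|\Pi(\xi)e^{(j)}|\ge c_0>0$ uniformly (each $e^{(j)}$ has rank three, hence is never symmetrized rank-one) is true but irrelevant, since $\hat\chi(\xi)$ is a complex linear combination of the $e^{(j)}$, and the elastic energy degenerates exactly where the \emph{differences} $e^{(i)}-e^{(j)}$ are compatible — that is what the conical multipliers $m_j$ encode. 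Your diagnosis at the end that "the compatible modes must be controlled using the surface energy and $\chi_j^2=\chi_j$" is correct as a heuristic, but the specific nonlinear structure used is trilinear (not quadratic), and this is not recoverable from the steps you describe.
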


Let us comment on this scaling result: As in the results in \cite{RT21}, Theorem \ref{thm:main_zero} quantifies the lower bound scaling behaviour in the singular perturbation parameter $\epsilon>0$ in terms of the \emph{order of lamination} of the displacement data. Indeed, as in \cite{KKO13} we note that the austenite phase (corresponding to the zero matrix) is a \emph{second} order laminate (c.f. Definition \ref{defi:lam} in Section \ref{sec:not}) and can be expressed as
\begin{align*}
\textbf{0} = \frac{1}{2}\left(\frac{1}{3}e^{(i)} + \frac{2}{3}e^{(j)} \right) + \frac{1}{2}\left(\frac{1}{3}e^{(i)} + \frac{2}{3}e^{(k)} \right), \ i,j,k \in \{1,2,3\}, \ i\neq j, \ j\neq k, \ i \neq k.
\end{align*}
While a scaling of the order $\epsilon^{\frac{2}{3}}$ (as in \cite{CO09, CO12} and as in the seminal results \cite{KM1, KM2}) corresponds to \emph{simple (i.e., first order) laminate boundary conditions}, the scaling of the order $\epsilon^{\frac{1}{2}}$ indeed encodes the presence of laminates of \emph{second order}. This directly matches the results from \cite[Theorem 3]{RT21} in which $\epsilon^{\frac{1}{2}}$ matching upper and lower scaling results were proved for laminates of second order in settings without gauge invariances.
As in \cite{RT21, RT22, RT23} we deduce the result of Theorem \ref{thm:main_zero} by systematically localizing the possible domains of concentration of the Fourier support of the components of the phase indicator $\chi$. In particular, this shows that the arguments from \cite{RT21} in combination with the use of the central trilinear quantity which had been identified in \cite{KKO13} are sufficiently robust to be applied in settings \emph{with} geometrically linearized frame indifference, i.e., in the presence of the gauge group $Skew(3)$.

In order to substantiate the claim that, indeed, the order of lamination is the only determining factor in the scaling law for the singularly perturbed energies and that the scaling law of order $\epsilon^{\frac{1}{2}}$ is no co-incidence, we complement the physically most interesting setting of austenite displacement boundary conditions from Theorem \ref{thm:main_zero} by general affine boundary data in the first and second order lamination convex hulls:

\begin{thm}
\label{thm:main_general}
Let $\Omega \subset \R^3$ be an open, bounded Lipschitz domain.
Let $E_{el}(u,\chi), E_{surf}(\chi)$ be as in \eqref{eq:total-energy}. 
\begin{itemize}
\item[(i)] Let $F\in K^{(2)}\setminus K^{(1)}=\text{intconv}\{e^{(1)}, e^{(2)}, e^{(3)}\}$ (see Definition \ref{defi:lam} in Section \ref{sec:not}). Let 
\begin{equation}\label{eq:BCset}
\mathcal{A}_F:=\{u\in H^{1}_{loc}(\R^3; \R^3): \ u(x) = F x + b \mbox{ in } \R^3 \setminus \overline{\Omega} \mbox{ for some } b\in \R^3\}.
\end{equation}
Then, there exist $\epsilon_0>0$ (depending only on $\Omega$ and $\dist(F, K^{(1)})$) and $C=C(\Omega,F,K)>1$ such that for any $\epsilon \in (0,\epsilon_0)$ it holds
\begin{align*}
C^{-1} \epsilon^{\frac{1}{2}} \leq \inf\limits_{\chi \in BV(\Omega; K)} \inf\limits_{u \in \mathcal{A}_F} (E_{el}(u,\chi) + \epsilon E_{surf}(\chi)) .
\end{align*}
\item[(ii)]  Let 
\begin{align*}
F\in K^{(1)} \setminus K
&=(e^{(1)},e^{(2)})\cup (e^{(2)}, e^{(3)})\cup (e^{(1)}, e^{(3)})\\
&= \text{conv}\{e^{(1)}, e^{(2)}, e^{(3)}\} \setminus \big(\text{intconv}\{e^{(1)}, e^{(2)}, e^{(3)}\}\cup\{e^{(1)},e^{(2)},e^{(3)}\}\big)
\end{align*}
(see Definition \ref{defi:lam} in Section \ref{sec:not}).
Then, there exist $\epsilon_0>0$ (depending only on $\Omega$ and on $\dist(F,K)$) and $C=C(\Omega,F,K)>1$ such that for any $\epsilon \in (0,\epsilon_0)$ it holds
\begin{align*}
C^{-1} \epsilon^{\frac{2}{3}} \leq  \inf\limits_{\chi \in BV(\Omega; K)} \inf\limits_{u \in \mathcal{A}_F} (E_{el}(u,\chi) + \epsilon E_{surf}(\chi)).
\end{align*}
\end{itemize}
\end{thm}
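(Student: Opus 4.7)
The plan is to reduce both cases to whole-space Fourier-analytic estimates on $\chi$. For any admissible pair $(u,\chi)$, define $\tilde u(x) = u(x)$ in $\Omega$ and $\tilde u(x) = Fx + b$ on $\R^3 \setminus \overline{\Omega}$ (which is the content of $\mathcal{A}_F$), and extend $\chi$ to a $BV$ field on $\R^3$ taking the value $F$ outside $\Omega$ in case (i), or the value of an appropriate simple laminate between the two active wells in case (ii). The elastic energy then equals $\int_{\R^3} |e(\nabla \tilde u) - \tilde\chi|^2 dx$ up to a boundary contribution controlled by $|D\tilde\chi|(\partial \Omega)$, which is absorbed into a constant multiple of $E_{surf}(\chi)$; thereafter the Fourier transform of $\tilde\chi$ becomes available.

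For (i), since $F$ is strictly interior to $\conv\{e^{(1)}, e^{(2)}, e^{(3)}\}$, write $F = \sum_j \lambda_j e^{(j)}$ with $\lambda_j \geq c(F) > 0$ for each $j$. I would then follow the Fourier-localization strategy that proves Theorem \ref{thm:main_zero}, which itself is a $Skew(3)$-aware refinement of the KKO trilinear estimate from \cite{KKO13}. Concretely, partition frequency space into cones $C_j$ adapted to the admissible lamination normals $\mathcal{B}_j$ of the variant $e^{(j)}$, and establish: the $L^2$-mass of $\widehat{\chi_j}$ outside $C_j$ is controlled by $E_{el}(u,\chi)$, using the symmetrized rank-one structure $e^{(i)} - e^{(j)} = 3\epsilon_{ijk}(b_{ij}\otimes b_{ji}+b_{ji}\otimes b_{ij})$ together with an optimal choice of skew-symmetric gauge; the high-frequency mass of $\widehat{\chi_j}$ inside $C_j \cap \{|\xi| > R\}$ is controlled by $R^{-2}E_{surf}(\chi)^2$; and the low-frequency contribution has $L^2$-norm bounded below by a constant depending on $\lambda_j$. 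Interpolating these three estimates and optimizing in $R$ then yields the $\epsilon^{1/2}$ bound exactly as for the austenite case.

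For (ii), $F$ lies on an open edge between two wells, say $F = \alpha e^{(i)} + (1-\alpha) e^{(j)}$ with $\alpha \in (0,1)$ and no contribution from $e^{(k)}$. Here only two wells are active on average and the natural competitor is a simple laminate with normal $b_{ij}$. I would adapt the one-dimensional Kohn-Müller argument used in \cite{CO09}: restrict to frequencies adapted to the direction $b_{ij}$, use the symmetrized rank-one connection $e^{(i)} - e^{(j)}$ to control deviations of $\chi$ from a first-order laminate by $E_{el}$ in the transverse directions and by $\epsilon E_{surf}$ in the $b_{ij}$-direction, and optimize a slab-width parameter. Because $F$ has vanishing coordinate on $e^{(k)}$, the third well contributes no low-frequency mass in that component, so the trilinear step of part (i) degenerates and one recovers only the weaker first-order scaling $\epsilon^{2/3}$, matching the classical simple-laminate bound.

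The main obstacle is implementing the trilinear estimate of part (i) in the presence of the $Skew(3)$ gauge group: the elastic penalty sees only $e(\nabla u)$, not $\nabla u$, so one must absorb the antisymmetric part of the gradient by choosing an optimal rotation within each Fourier-localization step before invoking the symmetrized rank-one structure to bound $\widehat{\chi_j}$ outside its admissible cone. This gauge-aware commutator estimate is precisely what transfers the framework of \cite{RT21} to the cubic-to-tetragonal setting, and verifying its applicability for the affine Dirichlet data in $\mathcal{A}_F$ is the technical heart of the argument.
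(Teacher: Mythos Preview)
Your outline for part (i) has a genuine gap: the three-step $L^2$ argument you describe (elastic energy controls mass outside the cones $C_j$; surface energy controls mass at frequencies $|k|>R$; low-frequency $L^2$ mass bounded below by a constant) is precisely the argument that yields $\epsilon^{2/3}$, not $\epsilon^{1/2}$. Indeed this is exactly how the paper proves part (ii). With only these ingredients the localization error is of size $(\mu R)^2$ (from the low-frequency cone bound of Lemma~\ref{lem:low-freq}) against $\mu^{-2}E_{el}+R^{-1}E_{surf}$, and optimization forces $R\sim\mu^{-1}$, $\mu\sim\epsilon^{1/3}$.

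What is missing is the \emph{trilinear bootstrap} of Propositions~\ref{prop:loc1} and~\ref{prop:loc2}. The quantity that drives the $\epsilon^{1/2}$ bound is not an $L^2$ norm of an individual $\tilde\chi_{jj}$ but the pointwise-nonvanishing product
\[
\Big|\int_{\R^3}\tilde\chi_{11}\tilde\chi_{22}\tilde\chi_{33}\,dx\Big|\gtrsim \dist(F,K^{(1)})^3|\Omega|,
\]
valid precisely because $F\in\text{intconv}\,K$. One then localizes each factor $\tilde\chi_{jj}$ to a truncated cone $C_{b_j,\mu,[\mu_3,\mu_2]}$ with $b_j\in\mathcal B_j$, paying only the first-order errors, and observes that the \emph{localized} trilinear integral vanishes identically: the Fourier support of the product of two factors lies in a Minkowski sum $C_{b_2,\mu,\mu_2}+C_{b_3,\mu,\mu_2}\subset\{\dist(k,\mathrm{span}(b_2,b_3))\le 2\mu\mu_2\}$, which is disjoint from $C_{b_1,\mu,[\mu_3,\mu_2]}$ once $\mu_3\sim\mu\mu_2$ (since $b_1\notin\mathrm{span}(b_2,b_3)$ for all relevant triples after the cancellation in $\tilde{\mathcal B}$). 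This second localization improves the error term to $(\mu^2\mu_2)^2$ rather than $(\mu\mu_2)^2$, and it is this gain of an extra power of $\mu$ that, after choosing $\mu_2\sim\mu^{-2}$ and then $\mu_2\sim\epsilon^{-1/2}$, produces $\epsilon^{1/2}$. Your sketch alludes to the KKO trilinear estimate but never invokes the product structure or the Minkowski-sum disjointness argument; without it you cannot beat $\epsilon^{2/3}$.

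Two smaller points. First, your handling of the $Skew(3)$ gauge is not how the paper proceeds: rather than ``choosing an optimal rotation within each Fourier-localization step'', the antisymmetric part is eliminated once and for all at the outset via the explicit multiplier $M(k)$ of Lemma~\ref{lem:KKO41} (from \cite{KKO13}), after which the analysis is purely on the diagonal components $\tilde\chi_{jj}$. Second, the high-frequency bound is $\mu_2^{-1}(E_{surf}+\Per(\Omega))$, not $R^{-2}E_{surf}^2$. For part (ii) your intuition is correct that only the first-order localization is needed; the paper's implementation stays in Fourier space (Lemma~\ref{lem:loc0} plus Lemma~\ref{lem:low-freq}) rather than the real-space Kohn--M\"uller slab argument you describe, but either route gives $\epsilon^{2/3}$.
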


Let us comment on this result: We first note that part (i) highlights that the scaling found in Theorem \ref{thm:main_zero} holds for \emph{any} boundary data in the \emph{second} order lamination convex hull. Moreover, all boundary data in the \emph{first} order lamination convex hull result in an $\epsilon^{\frac{2}{3}}$ lower scaling bound as proved in part (ii).

As in \cite{RT21} the theorem thus gives strong indication that in the setting of affine displacement boundary data the scaling bounds and thus the complexity of microstructures for the cubic-to-tetragonal phase transformation are purely determined by the complexity of the displacement data encoded in their \emph{lamination order}. In the next section, we will substantiate that one should expect all of these bounds to be sharp by providing matching upper bound constructions (for slightly modified boundary data).

As in \cite{RT21, RT22, RT23}, the arguments for Theorems \ref{thm:main_zero}, \ref{thm:main_general} systematically use the reduction of the Fourier concentration domains by relying on a two-step bootstrap procedure for double and a single step Fourier localization argument for simple laminate data.

\subsection{Upper bounds}
\label{sec:upperintro}

In order to give evidence of the optimality of the deduced lower scaling bounds, we also discuss an upper bound construction. Due to technical difficulties, for boundary data which are laminates of second order we here focus on settings with only partial displacement data and -- since this is the physically most relevant setting -- only consider the case of zero boundary data (any boundary condition in the interior of the lamination convex hull could be treated analogously). 
Moreover, since due to the three-dimensionality of the problem at hand, the optimal upper bound constructions can be rather complex (c.f. the proof of Proposition 6.3 in \cite{RT21}), we further focus on a specific choice of domain. We refer to the beginning of Section \ref{sec:upper} for further comments on this and to Remark \ref{rmk:subopt} for observations on deducing ``direct'' but sub-optimal bounds in the full Dirichlet data setting.

Let us outline the precise set-up: Recalling that
\begin{align*} 
\textbf{0} = \frac{1}{2}\left(\frac{1}{3}e^{(1)} + \frac{2}{3}e^{(2)} \right) + \frac{1}{2}\left(\frac{1}{3}e^{(1)} + \frac{2}{3}e^{(3)} \right), 
\end{align*}
we seek to work with a second order lamination construction, laminating $e^{(A)}:= \frac{1}{3}e^{(1)} + \frac{2}{3}e^{(2)}$ and $e^{(B)}:=\frac{1}{3}e^{(1)} + \frac{2}{3}e^{(3)} $ in an outer (branched) laminate oriented in the direction $b_{32}$ first, and then filling these with a second order laminate between $e^{(1)}, e^{(2)}$ and $e^{(1)}, e^{(3)}$ in directions $b_{21}$ and $b_{13}$, respectively. Compared to the setting in \cite{RT21} these directions are not all orthogonal but only transversal. Moreover, the second lamination is necessary in \emph{both} of the outer, first order laminates.
In order to avoid additional technicalities originating from this, we fix the domain in which we carry out this construction as follows:
We define the orthonormal basis $\mathcal{R}:=\{n,b_{32},d\}$ where $n,d\in\mathbb{S}^2$ are given by
\begin{equation}
\label{eq:basis}
n:=\frac{1}{\sqrt{6}}\begin{pmatrix}-2\\1\\1\end{pmatrix},
\quad
d:=\frac{1}{\sqrt{3}}\begin{pmatrix}1\\1\\1\end{pmatrix}.
\end{equation}
We highlight that $d$ is orthogonal to all the chosen directions of lamination, i.e., $b_{21},b_{13},b_{32}\in \text{span}(d)^\perp$.
With this observation, we then define
\begin{align}
\label{eq:domain}
\Omega:=\Big(-\frac{1}{2},\frac{1}{2}\Big)n\times\Big(-\frac{1}{2},\frac{1}{2}\Big)b_{32}\times\Big(-\frac{1}{2},\frac{1}{2}\Big)d.
\end{align}

Further, working with partial Dirichlet and partial periodic boundary conditions, we introduce the following notation: For any $\Omega'\subseteq\Omega$ we write
\begin{align}
\label{eq:part_boundary}
\p_0\Omega' := \p\Omega' \setminus \Big\{x\cdot d=\pm\frac{1}{2}\Big\}.
\end{align}

With this notation in hand, we formulate our main result on the scaling of an upper bound construction.

\begin{thm}[Upper bound construction, second order laminates]
\label{thm:upper}
Let $\Omega \subset \R^3$ be as in \eqref{eq:domain}.
Let $E_{el}(u,\chi), E_{surf}(\chi)$ be as in \eqref{eq:total-energy}.  Then there exist $\epsilon_0=\epsilon_0(\Omega,K)>0$ and $C=C(\Omega,K)>1$ such that for every $\epsilon\in(0,\epsilon_0)$ there exist mappings $u_\epsilon\in C^{0,1}(\R^3; \R^{3})$, $\chi_{\epsilon} \in BV_{loc}(\R^3; K)$ with 
\begin{align*}
E_{el}(u_\epsilon,\chi_\epsilon) + \epsilon E_{surf}(\chi_\epsilon) \leq C \epsilon^{\frac{1}{2}}
\end{align*}
and such that $u_\epsilon, \chi_\epsilon$ are one-periodic in the $d$ direction and satisfy $u_\epsilon(x) = 0$ for every $x \in \p_0 \Omega$. 
\end{thm}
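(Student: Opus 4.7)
The plan is to build a hierarchical, self-similar branched construction realizing $\mathbf{0}=\tfrac12 e^{(A)}+\tfrac12 e^{(B)}$ as a second order laminate. Since $b_{32},b_{21},b_{13}\in \spa(d)^\perp$, the entire construction can be chosen independent of $x\cdot d$, which automatically yields the required $d$-periodicity and effectively reduces the task to a two-dimensional problem on the section $R:=(-1/2,1/2)n\times(-1/2,1/2)b_{32}$.

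\textbf{Outer level.} First I would set up a Kohn--Müller type branched laminate on $R$ between the strains $e^{(A)}$ and $e^{(B)}$ with lamination normal $b_{32}$, self-similarly refining towards the two faces $\{x\cdot b_{32}=\pm 1/2\}$ where the Dirichlet datum $u=0$ matches the mean $\tfrac12 e^{(A)}+\tfrac12 e^{(B)}=\mathbf{0}$. This is a classical austenite-to-twin scaling problem for the strain pair $\{e^{(A)},e^{(B)}\}$; it produces a piecewise affine macroscopic displacement $u^{\rm out}_\epsilon$, compatible across the outer interfaces by symmetrized rank-one matching with normal $b_{32}$, vanishing on $\partial_0\Omega$, and with total outer energy bounded by $C\epsilon^{2/3}$ after optimization in the cascade parameters.

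\textbf{Inner level.} Inside each $A$-cell of the outer cascade I would superimpose a simple laminate $w^{\rm in}_\epsilon$ with normal $b_{21}$, period $\lambda$, and values $e^{(1)},e^{(2)}$ with volume fractions $\tfrac13,\tfrac23$ so that the mean equals $e^{(A)}$; analogously in each $B$-cell with normal $b_{13}$ and values $e^{(1)},e^{(3)}$. The inner oscillation is cut off to zero by a piecewise affine interpolation in a transition strip of width $\lambda$ adjacent to every outer $b_{32}$-interface and to $\partial_0\Omega$. Because $b_{21}$ and $b_{13}$ are transversal to $b_{32}$ and both lie in $\spa(d)^\perp$, and because $|w^{\rm in}_\epsilon|\lesssim \lambda$ with $|\nabla w^{\rm in}_\epsilon|\lesssim 1$, the cut-off preserves the $L^\infty$ bound on $\nabla u_\epsilon$ and the vanishing trace on $\partial_0\Omega$. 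Defining $\chi_\epsilon\in BV_{loc}(\R^3;K)$ as the resulting pure-variant partition, the inner level contributes
\[
|D\chi_\epsilon|(\Omega)\lesssim \frac{1}{\lambda},\qquad E_{el}(u_\epsilon,\chi_\epsilon)\lesssim \lambda,
\]
where the elastic cost originates from the $O(\lambda)$-thick transition strips, which have total volume $O(\lambda)$ and inside which the strain mismatch is pointwise of order one.

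\textbf{Balancing and the main obstacle.} Summing the two levels gives
\[
E_{el}(u_\epsilon,\chi_\epsilon)+\epsilon E_{surf}(\chi_\epsilon)\lesssim \epsilon^{2/3}+\lambda+\frac{\epsilon}{\lambda},
\]
and the choice $\lambda\sim \epsilon^{1/2}$ balances the last two terms and absorbs the subdominant $\epsilon^{2/3}$ outer contribution, yielding the claimed $C\epsilon^{1/2}$. The delicate point will be to realize both levels simultaneously as the symmetric part of a single globally Lipschitz, $d$-independent displacement vanishing on $\partial_0\Omega$: since $b_{32},b_{21},b_{13}$ are pairwise transversal but not mutually orthogonal, the outer branching interfaces and the inner laminate interfaces do not form a rectangular mesh, so the interpolation linking the oscillatory inner contribution to its cut-off near every outer interface must be engineered carefully to avoid creating an extra $O(1)$ elastic contribution and so that the inner volume fractions adjust correctly inside each branched outer cell. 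The specific choice of domain \eqref{eq:domain}, of the orthonormal basis $\mathcal{R}=\{n,b_{32},d\}$, of the $d$-periodicity, and of the partial Dirichlet data \eqref{eq:part_boundary} is tailored precisely to keep this bookkeeping manageable, in the spirit of the analogous (but orthogonal) construction in the proof of Proposition~6.3 of \cite{RT21}.
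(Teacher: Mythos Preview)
Your accounting of the inner-level elastic cost is off, and this is not a bookkeeping detail but the whole point. You assert that the cut-off strips of width $\lambda$ adjacent to the outer $b_{32}$-interfaces have total volume $O(\lambda)$. But the total length of those outer $A$/$B$ interfaces is $\sim 1/r$ (this is exactly the quantity $|D\chiaux|(\Omega)$ in Lemma~\ref{lem:1tree}), so the strip volume is $\sim \lambda/r$, and the elastic energy picked up there (the strain sits at $e^{(A)}$ or $e^{(B)}$, hence at distance $\gtrsim 1$ from $K$) is $\sim \lambda/r$, not $\lambda$. With the correct count the total energy is
\[
r^{2}+\frac{\lambda}{r}+\frac{\epsilon}{\lambda},
\]
which optimizes to $r\sim\epsilon^{1/5}$, $\lambda\sim\epsilon^{3/5}$, and gives only $\epsilon^{2/5}$. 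If instead you first freeze $r\sim\epsilon^{1/3}$ to get the claimed outer $\epsilon^{2/3}$, the inner balance becomes $\lambda\epsilon^{-1/3}+\epsilon/\lambda\gtrsim\epsilon^{1/3}$, which is worse. Either way you do not reach $\epsilon^{1/2}$.

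The paper avoids this by making the \emph{inner} laminate branched as well: inside each outer cell $\Omega_i^{(j)}$ one builds a second Kohn--M\"uller cascade (with its own scale $r_2$) that refines towards the $A$/$B$ interfaces and matches $\uaux$ \emph{exactly} there, so no cut-off strip is needed at all (Lemmas~\ref{lem:2ord1} and~\ref{lem:2ord-triangle}). This two-scale branching produces $r^{2}+(r_2/r)^{2}+r_2+\epsilon/r_2$, and the choice $r_2\sim r^{2}$, $r\sim\epsilon^{1/4}$ yields the $\epsilon^{1/2}$ bound. Two further issues you glossed over but which require real work in the paper: the outer cascade must refine towards the $n$-faces (not the $b_{32}$-faces as you wrote), and the non-orthogonality of $b_{21},b_{13}$ with $n$ forces a separate treatment of triangular corner regions $T_l,T_r$ and of the triangular cross-sections of $\Omega_2^{(j)}$, each with its own self-similar refinement.
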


\begin{rmk}
\label{rmk:boundary}
We remark that our boundary conditions will in fact be slightly stronger: It will hold that $u_{\epsilon}, \chi_{\epsilon}$ are not only periodic, but actually constant in the $d$ direction.
\end{rmk}

Moreover, for completeness, we complement Theorem \ref{thm:upper} with an upper bound construction for a first order laminate. For simplicity, we here only consider a particular boundary datum from $K^{(1)}$; the scaling for general data in $K^{(1)}$ can be obtained similarly.

\begin{thm}[Upper bound construction, first order laminates]
\label{thm:upper2}
Let $\Omega \subset \R^3$ be as in \eqref{eq:domain} and let $F= \frac{1}{2} e^{(2)} + \frac{1}{2} e^{(3)}$.
Let $E_{el}(u,\chi), E_{surf}(\chi)$ be as in \eqref{eq:total-energy}.  Then there exist $\epsilon_0=\epsilon_0(\Omega,F,K)>0$ and $C=C(\Omega,F,K)>1$ such that for every $\epsilon\in(0,\epsilon_0)$ there exist mappings $u_\epsilon\in C^{0,1}(\Omega; \R^{3})\cap\mathcal{A}_F$, $\chi_{\epsilon} \in BV(\Omega; \{e^{(2)},e^{(3)}\})$ with 
\begin{align*}
E_{el}(u_\epsilon,\chi_\epsilon) + \epsilon E_{surf}(\chi_\epsilon) \leq C \epsilon^{\frac{2}{3}}. 
\end{align*}
\end{thm}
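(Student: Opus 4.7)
The plan is to execute the classical Kohn--Müller self-similar branching construction (\cite{KM1,KM2}; see also \cite{CO09,CO12,RT21}), adapted to the cubic-to-tetragonal setting with target strain $F=\tfrac12(e^{(2)}+e^{(3)})$. Since $e^{(2)}$ and $e^{(3)}$ are symmetrized rank-one connected with normal $b_{32}$, the natural interior ansatz is a simple laminate between $e^{(2)}$ and $e^{(3)}$ with normal $b_{32}$ and equal volume fractions. Including the skew correction required for pointwise rank-one Hadamard compatibility of the full gradient, one obtains a displacement of the form $u(x)=Fx+V(x\cdot b_{32})\, b_{23}$, with $V$ a periodic, zero-mean, piecewise-affine triangular wave of period $2h$ and amplitude $\sim h$; this is exactly stress-free in the bulk, $e(\nabla u)=\chi$.

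The full Dirichlet condition $u=Fx+b$ on $\partial\Omega$ splits into three tasks according to the three families of faces. On $\{x\cdot b_{32}=\pm\tfrac12\}$ I would pick the period $h_0=(2N)^{-1}$, $N\in\N$, and center the triangular wave so that $V(\pm\tfrac12)=0$, making the datum match automatically. On the remaining faces $\{x\cdot n=\pm\tfrac12\}$ and $\{x\cdot d=\pm\tfrac12\}$, the correction $V$ does not vanish, and the laminate must be refined by branching. Since both $n,d\in\mathrm{span}(b_{32})^\perp$, the branching is performed in the two-dimensional transverse plane spanned by $n$ and $d$: I would partition the transverse unit square into $K\sim\log(1/\epsilon)$ concentric dyadic shells of radial widths $\ell_k$, with shell $k$ carrying a laminate of period $h_k=2^{-k}h_0$; between two adjacent shells the period halves via the standard Y-shaped transition, in which two period-$h_{k+1}$ cells merge into one period-$h_k$ cell with the displacement interpolated affinely.

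Continuity across generations is enforced by the skew corrections and the choice of affine constants in each slab; extending by $Fx$ outside $\overline{\Omega}$ then yields a Lipschitz $u_\epsilon\in\mathcal{A}_F$. Summing the per-shell elastic and surface costs gives the familiar Kohn--Müller expression
\[
E_\epsilon(u_\epsilon,\chi_\epsilon)\;\lesssim\;\sum_{k=0}^{K}\Bigl(\frac{h_k^{2}}{\ell_k}+\epsilon\frac{\ell_k}{h_k}\Bigr),
\]
whose term-wise balance is achieved by $\ell_k\sim h_k^{3/2}\epsilon^{-1/2}$. With $h_k=2^{-k}h_0$ and $h_0\sim\epsilon^{1/3}$, the geometric series evaluates to order $\epsilon^{2/3}$, and the total branching thickness $\sum_k\ell_k\sim h_0^{3/2}\epsilon^{-1/2}\sim 1$ fits inside the transverse square.

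The main obstacle I foresee is the geometry of the \emph{two-dimensional} transverse branching. Unlike the partially periodic setting of Theorem~\ref{thm:upper}, in which the orthogonality $b_{32}\perp d$ allowed a $d$-independent construction, here $Fd\neq 0$ forces the Dirichlet datum $Fx$ to depend non-trivially on $x\cdot n$ also on the faces $\{x\cdot d=\pm\tfrac12\}$, so the laminate must branch simultaneously towards two orthogonal pairs of transverse faces. The per-shell elastic/surface balance carries over unchanged, but the corners of the transverse square and the consistency of the affine displacement across the resulting two-dimensional array of transition cells require careful bookkeeping. Modulo these technicalities, the construction delivers the announced $\epsilon^{2/3}$ upper bound.
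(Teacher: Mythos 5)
Your plan is essentially the paper's: a branched simple laminate between $e^{(2)}$ and $e^{(3)}$, oscillating in the $b_{32}$ direction with displacement of the form $u = Fx + V(x\cdot b_{32})\,b_{23}$ and refining towards the boundary in both transverse directions $n$ and $d$, balanced at the Kohn--M\"uller scaling $r\sim\epsilon^{1/3}$ to yield $\epsilon^{2/3}$.

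The two-dimensional branching technicalities you flag and leave open are exactly what the paper dispatches with the precomposition trick borrowed from \cite[Proposition 6.3]{RT21}: take the essentially two-dimensional first-order branching deformation $\tilde u(z_1,z_2)$ from Lemma \ref{lem:1tree} (refining only in $n$, oscillating in $b_{32}$, constant in $d$) and set $u(x)=\tfrac{3}{2}\,\tilde u\big(\max\{|z_1|,|z_3|\},z_2\big)+Fx$, noting that $F=\tfrac{3}{2}e(A)+C=\tfrac{3}{2}e(B)+C$ with $C=F$. This automatically produces the concentric square shells you describe, is Lipschitz, attains $u=Fx$ on all of $\partial\Omega$, and carries over the $r^2+\epsilon r^{-1}$ bound of Lemma \ref{lem:1tree} unchanged: since $A=2\,b_{23}\otimes b_{32}$ and $B=-2\,b_{23}\otimes b_{32}$ annihilate both $n$ and $d$, swapping the refinement direction from $n$ to $d$ across the diagonals $|z_1|=|z_3|$ leaves the gradient untouched in the laminate regions and only replaces the transition shear $A^{(j)}-A=-\tfrac{L_j}{H_j}\,b_{23}\otimes n$ by the equally small $-\tfrac{L_j}{H_j}\,b_{23}\otimes d$. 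This removes the corner and two-dimensional transition-cell bookkeeping you were worried about, and completes the step you left as ``modulo technicalities''.
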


The upper bound constructions from Theorems \ref{thm:upper} and \ref{thm:upper2} are based on the ones from \cite{RT21} (and the earlier versions from \cite{CC15,CO09, CO12}).

Let us comment on the role of these upper bound constructions:
Firstly, we emphasize that Theorems \ref{thm:upper}, \ref{thm:upper2} (together with Theorems \ref{thm:main_zero}, \ref{thm:main_general}) indeed give strong credence to the role of the order of lamination of the prescribed displacement Dirichlet data as the determining factor of the scaling law for the singularly perturbed energy from above. On the one hand, for the simple laminate regime for planar configurations the $\epsilon^{\frac{2}{3}}$ scaling is a well-known result in the literature (c.f. \cite{CO09,CO12}). In what follows, in the proof of Theorem \ref{thm:upper2} we modify this slightly by using the proof of Proposition 6.3 from \cite{RT21} to include the full Dirichlet conditions. 
On the other hand, our result from Theorem \ref{thm:upper} provides an $\epsilon^{\frac{1}{2}}$ scaling for the second order laminate setting (c.f. also \cite{KW16}).
Hence, in both regimes -- the one for simple laminate data and the one for data in the second order lamination convex hull -- the scaling behaviour exactly matches the behaviour for first and second order laminates which had been deduced in \cite{RT21} for singular perturbation models without gauge invariances. 
Hence, in spite of the fact that Theorem \ref{thm:upper} does not feature complete displacement data, the given construction does provide an $\epsilon^\frac{1}{2}$ scaling for a second order lamination.
We thus view Theorem \ref{thm:upper} as a strong indication that -- apart from technical difficulties -- the lower scaling exponent from Theorem \ref{thm:main_zero} is indeed optimal. 
As a consequence, this thus also underpins the role of simplified models as in \cite{RT21,RT22, RT23} in the study of more realistic non-quasiconvex multiwell energies.
Further, we highlight that to the best of the authors' knowledge, the construction from Theorem \ref{thm:upper} provides a first result of a scaling bound for \emph{(branched) second order laminates} in \emph{fixed domains} in models of linearized elasticity (\cite{KKO13} provides a nucleation-type second order laminate construction in which essentially first order laminates are prevalent, \cite{KW16} provides a similar second order construction as ours in the setting of compliance minimization).
Finally, also from an experimental point of view, due to the planarity of the interactions involved in second order laminate constructions in the cubic-to-tetragonal phase transformation, the mixed Dirichlet-periodic boundary conditions could give the right intuition for interesting experimental settings such as for situations involving laminates which only refine towards one habit plane (instead of investigating a nucleus which is completely immersed in a sea of austenite).

We further note that it is expected that this behaviour persists for other variants of singularly perturbed energies (e.g. singular perturbation models involving diffuse surface energies) whose exploration, for clarity of exposition, we however postpone to future work.

\subsection{Relation to the results in the literature}

As a prototypical non-quasiconvex multiwell problem in materials science and the calculus of variations, the cubic-to-tetragonal phase transformation has been investigated rather intensively: The foundational result \cite{DM2} provided the first qualitative rigidity result, showing that in the geometrically linearized theory the only stress-free solutions to the differential inclusion \eqref{eq:diff-incl} are (locally) given by \emph{simple laminates} or \emph{twins} with the normals from \eqref{eq:normals}. In this sense the differential inclusion \eqref{eq:diff-incl} is (rather) rigid in the \emph{geometrically linearized} theory. In the \emph{geometrically nonlinear theory of elasticity} a rather striking difference emerges: On the one hand, if for the associated differential inclusion the $Skew(3)$ invariance is replaced by a $SO(3)$ invariance and if no additional regularity condition is assumed for the deformation field, then the differential inclusion problem becomes extremely \emph{flexible} in that it permits substantially more complicated solutions than simple laminates \cite{CDK}. These are obtained by a convex integration scheme. If, on the other hand, in the nonlinear differential inclusion the deformation field is such that the associated deformation gradient is $BV$ regular, then, as shown in \cite{K}, the rigidity from \cite{DM2} is recovered in that again only local simple laminates emerge as possible solutions to the associated differential inclusion. As a consequence, while the geometrically linearized cubic-to-tetragonal phase transformation is \emph{rigid} without any additional regularity requirement, the geometrically nonlinear version displays a \emph{dichotomy between rigidity and flexibility} (c.f also \cite{DM1,R16,RTZ19, RZZ16,DPR20} for related results in between rigidity and flexibility).

Building on the seminal articles \cite{KM1, KM2} also the \emph{quantitative analysis} of the scaling behaviour of non-quasiconvex, singularly perturbed energies of the type \eqref{eq:total-energy} has attracted substantial activity as prototypical multi-well problems arising from materials science. Here, particularly the geometrically linearized cubic-to-tetragonal phase transformation has been studied with various objectives: The fundamental articles \cite{CO09, CO12} capture and quantify the rigidity results from \cite{DM2} in the periodic setting and in the form of ``localized interior'' rigidity estimates; the articles \cite{KKO13, KO19} study its nucleation behaviour in the form of a precise scaling law by establishing (anisotropic) isoperimetric type estimates and in \cite{BG15} nucleation behaviour in corner domains is considered.
Moreover, the articles \cite{TS21a, TS21b} explore the fine properties of this phase transformation in a regime displaying the $\epsilon^{\frac{2}{3}}$ scaling behaviour of the simple laminate scaling.

In this context, the results from \cite{CO09, CO12} and \cite{KKO13} are closely related to our setting. However, in contrast to our result, on the one hand the main results from \cite{CO09, CO12} seek to quantify the rigidity estimates from \cite{DM2} and thus focus on \emph{first order laminates}. On the other hand, the result from \cite{KKO13} focuses on the \emph{nucleation behaviour} for martensite in a matrix of austenite. It thus deals with a \emph{nucleation-type boundary condition} for a second order laminate. While it is thus quite close to our result, as an isoperimetric problem, it however contains a further degree of freedom in the choice of the nucleation domain (a lens-shaped domain satisfies this for instance) and thus displays a \emph{different (lower) bound} in the (non-dimensionalized) surface energy parameter. Contrary to the setting in \cite{KKO13}, by prescribing displacement (i.e., Dirichlet) conditions, microstructure is enforced \emph{more strongly} in our setting and thus becomes \emph{more expensive} in the Dirichlet case. A similar phenomenon had been deduced for simplified models without gauge invariances in \cite{RT21, RT23} in which even higher order laminates were treated systematically and in a unified framework. 
The present article shows that also for the geometrically linearized cubic-to-tetragonal phase transformation -- which now has \emph{gauge invariances} -- the \emph{complexity of the Dirichlet boundary conditions} (in the form of their lamination order) is the key factor in the determination of the complexity of the scaling law in our fixed domain setting. We view this observation as one of the main contributions of this article.

Let us emphasize that there are many further contributions on the study of the dichotomy between rigidity and flexibility and the exploration of (scaling laws for) complex microstructures related to shape-memory alloys. As a non-exhaustive list we refer to the articles \cite{ContiSchweizer06, ContiSchweizer06a, DF20,KLLR19, CDZ17,C,Pompe, CKZ17, KK,CDPRZZ20,CT05,K,CM99,L06, L01, BJ92, Ball:ESOMAT,B2,KMS03,CS13,CS15,BK16,CC15,CDMZ20,
R16,RS23,Rue16b,RTZ19,RT22,AKKR22,W97,C99,R22,RRT23} and the references therein.
Moreover, we highlight that similar ideas and techniques have also been used in important related models such as in compliance minimization \cite{KW14,KW16, PW21} or, for instance, in models for micromagnetics \cite{CKO99,OV10,KN18,KS22,GZ23}.

\subsection{Outline of the article}
The remainder of the article is structured as follows: In Section \ref{sec:not} we briefly recall the notation for the cubic-to-tetragonal phase transformation and some preliminary results. Next, in Section \ref{sec:lower} we provide the lower bound estimates of Theorems \ref{thm:main_zero}, \ref{thm:main_general} by exploiting systematic Fourier bounds. In Section \ref{sec:upper} we finally present the upper bound constructions of Theorems \ref{thm:upper}, \ref{thm:upper2}.

\section{Notation and Preliminaries}
\label{sec:not}

In this section we recall some basic notation and properties of the cubic-to-tetragonal phase transformation.

\begin{defi}[Laminates, order of lamination]
\label{defi:lam}
Let $K\subset \R^{n\times n}_{sym}$ be a compact set. We set $K^{(0)}:= K$ and for $j\geq 1$ 
\begin{align*}
K^{(j)}
&:=\{e \in \R^{n\times n}_{sym}: \  \mbox{ there exist } e_1, e_2 \in K^{(j-1)}, \ \lambda \in [0,1], \ a, b \in \R^{n}\setminus \{0\} \\
& \quad \mbox{ such that } e = \lambda e_1 + (1-\lambda) e_2, \mbox{ and } e_1 - e_2 = \frac{1}{2}(a\otimes b + b \otimes a)\}.
\end{align*}
The \emph{(symmetrized) lamination convex hull} $K^{(lc,sym)}$ is then defined as 
\begin{align*}
K^{(lc,sym)}:= \bigcup\limits_{j=0}^{\infty} K^{(j)}.
\end{align*}
For $j\geq 1$ we denote the elements of $K^{(j)}\setminus K^{(j-1)}$ as \emph{laminates of order $j$}. If $j=1$ we also refer to the elements in $K^{(1)}\setminus K$ as \emph{simple laminates}.
\end{defi}

It is well-known (c.f. \cite{B}) that for the geometrically linearized cubic-to-tetragonal phase transformation and thus for $K:=\{e^{(1)}, e^{(2)}, e^{(3)}\}$ as in \eqref{eq:diff-incl} it holds that $K^{(lc,sym)} = K^{(2)}$ and that 
\begin{align*}
K^{(1)} = \text{conv}(\{e^{(1)},e^{(2)}, e^{(3)}\})\setminus \text{intconv}(\{e^{(1)},e^{(2)}, e^{(3)}\}), \ K^{(2)}\setminus K^{(1)} = \text{intconv}(\{e^{(1)},e^{(2)}, e^{(3)}\}).
\end{align*}
In particular, $K^{(1)}$ is given by the line segments connecting the wells. For notational simplicity we also denote this as
\begin{align*}
\text{conv}(\{e^{(1)},e^{(2)}, e^{(3)}\})\setminus \text{intconv}(\{e^{(1)},e^{(2)}, e^{(3)}\})=[e^{(1)},e^{(2)}]\cup [e^{(2)}, e^{(3)}]\cup [e^{(1)}, e^{(3)}].
\end{align*}

\section{The Lower Bound: Fourier Estimates}
\label{sec:lower}

In this section we present the proofs of Theorems \ref{thm:main_zero} and \ref{thm:main_general}. To this end, we first recall a characterization of the elastic energy which had already been used in \cite{CO09} and \cite{KKO13}. We then use coercivity and high frequency bounds for a first frequency truncation similarly as in \cite{RT21, RT23}. In order to deduce the bounds for second order laminate boundary data this is then combined with a second frequency truncation. Here a major role is played by a trilinear quantity (in the phase indicators) which had been introduced in \cite{KKO13} and which encodes information on second order laminates. We derive sufficiently strong frequency localized control for it.

\subsection{The elastic energy}

Let $u\in L^2(\R^3)$. We define its Fourier transform (via the standard density argument) using the following convention
$$
\F u(k) := (2\pi)^{-\frac{3}{2}}\int_{\R^3} e^{-i k\cdot x}u(x)dx.
$$
If there is no danger of confusion, we will also use the notation $\hat u=\F u$.
For vector fields this is understood to be applied componentwise.

We next consider the elastic energy. We directly discuss this for general affine boundary data $Fx + b$ for some $F \in \conv\{e^{(1)},e^{(2)}, e^{(3)}\}$ and $b\in \R^3$. In this case it reads
\begin{equation}\label{eq:elastic-energy}
E_{el}(\chi)=\inf_{u \in \mathcal{A}_F} \int_{\Omega}\big|e(\nabla u)-\chi\big|^2dx,
\end{equation}
with $\mathcal{A}_F$ as in \eqref{eq:BCset}.
Seeking to use Fourier methods, we extend the expression for the elastic energy to an energy defined in the whole space: Considering $u(x) = v(x) + Fx +b$, we obtain
\begin{align}
\label{eq:el_freq}
\begin{split}
E_{el}(\chi)&=\inf_{u \in \mathcal{A}_F} \int_{\Omega}\big|e(\nabla u)-\chi\big|^2dx
= \inf\limits_{v \in H^{1}_0(\Omega;\R^3)} \int_{\Omega}\big|e(\nabla v)-(\chi-F)\big|^2dx\\
&\geq \inf\limits_{v \in H^{1}(\R^3;\R^3)} \int_{\R^{3}}\big|e(\nabla v)-\tilde{\chi}\big|^2dx.
\end{split}
\end{align}
Here we have set 
\begin{align}
\label{eq:tildechi}
\tilde{\chi}:= \left\{
\begin{array}{ll}
\chi -F \mbox{ in } \Omega,\\
0 \mbox{ in } \R^3 \setminus \overline{\Omega}.
\end{array}
\right.
\end{align}
In what follows, with slight abuse of notation, in cases in which there is no danger of confusion, we will often drop the tilda in the notation for $\tilde{\chi}$.

Due to the extension to a whole space problem, we can immediately infer a lower bound for the elastic energy in Fourier space by invoking \cite[Lemma 4.1]{KKO13}.

\begin{lem}[Lemma 4.1 in \cite{KKO13}]
\label{lem:KKO41}
Let $F\in\conv\{e^{(1)},e^{(2)},e^{(3)}\}$, $E_{el}$ be defined by \eqref{eq:elastic-energy}, let $\chi\in L^{\infty}(\Omega;K)$ 
and let $\tilde\chi$ be as in \eqref{eq:tildechi}. Then it holds
\begin{equation*}
E_{el}(\chi) \gtrsim \int_{\R^3}\F \tilde{\chi}_{\rm diag}(k) M(k) \overline{(\F\tilde{\chi}_{\rm diag}(k))^t}dk,
\end{equation*}
where, for every $k\neq0$, $M(k)$ is the symmetric and positive semidefinite matrix given by
\begin{equation*}
M(k):=\frac{1}{|k|^4}
\begin{pmatrix}
(k_2^2+k_3^2)^2 & k_1^2 k_2^2 & k_1^2 k_3^2 \\
k_2^2 k_1^2 & (k_1^2+k_3^2)^2 & k_2^2 k_3^2 \\
k_1^2 k_3^2 & k_2^2 k_3^2 & (k_1^2+k_2^2)^2
\end{pmatrix},
\end{equation*}
and where $\tilde\chi_{\rm diag}:\R^3\to\R^3$ is $\tilde\chi_{\rm diag}=(\tilde\chi_{11},\tilde\chi_{22},\tilde\chi_{33})$.
\end{lem}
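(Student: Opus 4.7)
The plan is to reduce the lower-bound inequality to a pointwise-in-frequency minimization via Plancherel, solve the resulting finite-dimensional quadratic problem in closed form, and then read off $M(k)$ from the minimum value. Since the statement coincides with Lemma~4.1 of \cite{KKO13}, the proof amounts to recording this computation.

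First, starting from \eqref{eq:el_freq} and applying Plancherel, the right-hand side equals
\begin{equation*}
\inf_{v\in H^1(\R^3;\R^3)}\int_{\R^3}\sum_{i,j=1}^{3}\Big|\tfrac{i}{2}\big(k_j\hat v_i(k)+k_i\hat v_j(k)\big)-\hat{\tilde\chi}_{ij}(k)\Big|^2\,dk.
\end{equation*}
The integrand being pointwise nonnegative, I would exchange infimum and integral to reduce the problem to a Hermitian quadratic minimization in $\hat v(k)\in\C^3$ for a.e.\ $k\neq 0$. The Euler--Lagrange equations at such a $k$, combined with the fact that $\tilde\chi$ is diagonal-valued (so $\hat{\tilde\chi}_{ij}(k)=0$ for $i\neq j$), take the form
\begin{equation*}
|k|^2\,\hat v_l(k)+k_l\,\big(k\cdot\hat v(k)\big)=-2i\,k_l\,\hat{\tilde\chi}_{ll}(k), \quad l=1,2,3.
\end{equation*}
Contracting with $k_l$ and summing in $l$ yields a scalar equation for $k\cdot\hat v(k)$, which then determines $\hat v(k)$ explicitly as a linear function of $\alpha(k):=\F\tilde\chi_{\rm diag}(k)$.

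Plugging this optimizer back into the integrand and regrouping the diagonal and off-diagonal contributions is expected to assemble the minimum into the Hermitian form $\alpha(k)\,M(k)\,\overline{\alpha(k)^t}$ with $M(k)$ as in the statement. The main obstacle lies in this last step: one has to expand $\big|\widehat{e(\nabla v)}_{ij}-\hat{\tilde\chi}_{ij}\big|^2$, separating the cases $i=j$ and $i\neq j$, and simplify the resulting rational expressions in $(k_1,k_2,k_3)$ (with common denominator $|k|^4$) until they collapse into the entries of $M(k)$. Once this is done, symmetry of $M(k)$ is immediate, and positive semidefiniteness is automatic from its construction as the minimum of a nonnegative quadratic form.
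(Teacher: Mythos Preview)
The paper does not give its own proof of this lemma; it is quoted verbatim from \cite{KKO13} and invoked as a black box. Your proposal is precisely the standard argument behind \cite[Lemma~4.1]{KKO13}: Plancherel, pointwise-in-$k$ minimization over $\hat v(k)\in\C^3$, solving the linear system $|k|^2\hat v_l+k_l(k\cdot\hat v)=-2ik_l\hat{\tilde\chi}_{ll}$, and back-substitution. The outline is correct, and the Euler--Lagrange system you wrote down is the right one. The only point worth flagging is the exchange of infimum and integral, which is harmless here since the pointwise minimizer depends measurably (in fact rationally) on $k$ and $\hat{\tilde\chi}(k)$, so the infimum is attained by an admissible $v$.
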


The lower bound from \eqref{eq:el_freq} can now be rephrased in terms of the vanishing of the Fourier multiplier from Lemma \ref{lem:KKO41}.
Again, this coercivity estimate comes from \cite[Lemma 4.2]{KKO13}.

\begin{lem}[Lemma 4.2 in \cite{KKO13}]
Let $F\in\conv\{e^{(1)},e^{(2)},e^{(3)}\}$, $E_{el}$ be defined by \eqref{eq:elastic-energy}, and let $\chi\in L^{\infty}(\Omega;K)$ and let $\tilde{\chi} \in BV(\R^3;\R^3)$ be as in \eqref{eq:tildechi}. Then it holds
\begin{equation*}
E_{el}(\chi) \gtrsim \sum_{j=1}^3\int_{\R^3}m_j(k)|\F\tilde{\chi}_{jj}(k)|^2 dk,
\end{equation*}
where, for every $j\in\{1,2,3\}$, $m_j$ is defined as the conical multiplier
\begin{equation}
\label{eq:mj}
m_j(k)=\dist^2\Big(\frac{k}{|k|},\B_j\Big)
\end{equation}
with $\B_j$ as in \eqref{eq:Bij}.
\end{lem}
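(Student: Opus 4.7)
Starting from Lemma~\ref{lem:KKO41}, which already gives
\begin{equation*}
E_{el}(\chi) \gtrsim \int_{\R^3} \F\tilde\chi_{\rm diag}(k)^* M(k) \F\tilde\chi_{\rm diag}(k)\, dk,
\end{equation*}
the plan is to reduce the statement to the pointwise inequality
\begin{equation*}
\xi^* M(k)\, \xi \gtrsim \sum_{j=1}^3 m_j(k)\, |\xi_j|^2
\end{equation*}
valid for every $k \in \R^3 \setminus \{0\}$ and every $\xi$ in the trace-free subspace $V := \{\eta \in \C^3 : \eta_1+\eta_2+\eta_3 = 0\}$. The restriction to $V$ is harmless because $\tr e^{(j)} = \tr F = 0$ together with $\tilde\chi \equiv 0$ on $\R^3 \setminus \overline\Omega$ forces $\tr \tilde\chi = 0$ a.e., so $\F\tilde\chi_{\rm diag}(k) \in V$ for a.e.\ $k$. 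Since both sides of the pointwise inequality are homogeneous of degree zero in $k$, the problem reduces to a uniform bound on the compact product $\mathbb{S}^2 \times \{\xi \in V : |\xi|=1\}$.

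The central step is to identify the joint zero set of the two quadratic forms on this compact set. A direct computation with the explicit form of $M(k)$ shows that $\xi^* M(k)\xi = 0$ for $\xi \in V\setminus\{0\}$ precisely when $k/|k| \in \mathcal{B}_{ij}$ for some $i \neq j$ and $\xi$ is proportional to the vector of diagonal entries of $e^{(i)}-e^{(j)}$; this simply reflects the fact that exact simple laminates with normal $b_{ij}$ are stress-free. At any such null configuration one has $b_{ij} \in \mathcal{B}_i \cap \mathcal{B}_j$, so $m_i(k) = m_j(k) = 0$, and the $\ell$-th diagonal entry of $e^{(i)}-e^{(j)}$ vanishes for $\ell \notin \{i,j\}$, so $m_\ell(k)|\xi_\ell|^2 = 0$ as well. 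Hence both sides of the desired inequality vanish on exactly the same finite set.

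To upgrade this coincidence of null sets into the quantitative bound, a local analysis around each null configuration $(b_{ij}, \xi^{(ij)})$ is required: introduce coordinates on $\mathbb{S}^2$ around $b_{ij}$ and decompose $V$ into the line through $\xi^{(ij)}$ and its orthogonal complement, then expand both quadratic forms to second order and verify that the Hessian of the left-hand side dominates that of the right-hand side in the transverse directions. This Hessian comparison is the principal technical obstacle. A clean way to carry it out is to exploit the variational interpretation underlying $M(k)$ (coming from the derivation of Lemma~\ref{lem:KKO41} via the minimization in $v$ of $|\tfrac12(k\otimes v + v\otimes k) - \hat{\tilde\chi}(k)|^2$): transverse to $b_{ij}$, the quadratic vanishing rate of $\xi^* M(k)\xi$ is governed by exactly the angular deviation $\dist(k/|k|,\mathcal{B}_j)$ that enters $m_j$, paired with the diagonal component $|\xi_j|$. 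Combined with continuity and compactness away from the finite null set, this yields the pointwise inequality with a uniform constant, and Plancherel then closes the proof.
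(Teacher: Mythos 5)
The paper does not prove this lemma. It is quoted verbatim as Lemma~4.2 of \cite{KKO13} and used as a black box, so there is no in-paper proof to compare against; I can only assess your proposal on its own merits.

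Your reduction is sound as far as it goes: trace-freeness of $\tilde\chi$ puts $\F\tilde\chi_{\rm diag}(k)$ in $V=\{\eta\in\C^3:\eta_1+\eta_2+\eta_3=0\}$, both quadratic forms are homogeneous of degree zero in $k$, and the joint null set consists precisely of the lamination normals $b_{ij}$ paired (up to phase) with the diagonal of $e^{(i)}-e^{(j)}$ — e.g.\ $\ker M(b_{12})\cap V=\operatorname{span}\{(1,-1,0)\}$ while $m_1(b_{12})=m_2(b_{12})=0$ and $(1,-1,0)_3=0$. The gap is in the passage from ``same null set'' to the quantitative pointwise bound. For nonnegative continuous functions on a compact set, coincidence of zero sets does \emph{not} imply one dominates the other: the left-hand side could vanish to strictly higher order along some approach to the zero set (compare $x^2$ against $|x|$ on $[-1,1]$). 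Thus the Hessian comparison at the null configurations is not a cleanup step, it is the entire mathematical content of the lemma — yet you only gesture at it. The appeal to ``the variational interpretation underlying $M(k)$'' asserts, rather than proves, the coincidence of vanishing rates. What is actually required is, at each null configuration up to $\pm k$, phase rotation in $\xi$, and coordinate permutation, to pick transverse coordinates on $\mathbb{S}^2\times(V\cap\{|\xi|=1\})$, expand both forms to second order, and verify that the Hessian of $\xi^* M(k)\xi$ dominates a multiple of that of $\sum_j m_j(k)|\xi_j|^2$. A spot check at $(b_{12},\tfrac{1}{\sqrt2}(1,-1,0))$, using the identity
\begin{equation*}
\xi^{(12)*}M(k)\xi^{(12)}=\frac{(k_1^2-k_2^2)^2+2k_3^2|k|^2}{2|k|^4},
\end{equation*}
does confirm the dominance there (the $\epsilon^2$-, $\epsilon\delta$- and $\delta^2$-coefficients of the transverse Hessian of the left-hand side form a positive definite matrix dominating those of the right-hand side), but you never carry this out — nor the more elementary direct algebraic lower bound for $\xi^*M(k)\xi$ on $V$ which is what actually appears in \cite{KKO13}. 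Until that step is executed, the argument has a hole at exactly the place you yourself flag as the ``principal technical obstacle.''
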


Before proceeding with our analysis, we first introduce our notation of Fourier multipliers. A function $m\in L^\infty(\R^3)$ gives rise to a \emph{Fourier multiplier} $m(D)$ defined as 
\begin{align}
\label{eq:mult}
\F(m(D)u)(k):= m(k) \F u(k).
\end{align}
For further use, we recall a corollary of the Marcinkiewicz multiplier theorem on $\R^n$ (see, for instance, \cite[Corollary 6.2.5]{Grafakos}) which provides $L^p$-$L^p$ bounds of regular Fourier multipliers provided a suitable decay of their derivatives holds.
We further recall that, if $m(k)=m(-k)$ and $u$ is a real-valued function then also $m(D)u$ is real valued.
\begin{prop}\label{prop:grafakos}
Let $m$ be a bounded $C^\infty(\R^n \setminus \{0\})$ function.
Assume that for all $h\in\{1,\dots,n\}$, all distinct $j_1,\dots, j_h\in\{1,\dots,n\}$, and all $k_j\in\R\setminus\{0\}$ with $j\in\{j_1,\dots,j_h\}$ we have
\begin{equation}\label{eq:decay-multiplier}
|\p_{j_1}\dots\p_{j_h} m(k)|\le A|k_{j_1}|^{-1}\dots|k_{j_h}|^{-1}
\end{equation}
for some $A>0$.
Then for all $p\in(1,\infty)$, there exists a constant $C_n>0$ depending on the dimension such that for every $u\in L^p(\R^n;\C)$ it holds
$$
\|m(D) u\|_{L^p}\le\|u\|_{L^p} C_n(A+\|m\|_{L^\infty})\max\Big\{p,\frac{1}{p-1}\Big\}^{6n}.
$$
\end{prop}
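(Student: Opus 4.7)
The plan is essentially a verification exercise: the statement coincides, up to cosmetic differences in notation, with the Marcinkiewicz multiplier theorem on $\R^n$ as stated in \cite[Corollary 6.2.5]{Grafakos}. I would match the hypothesis \eqref{eq:decay-multiplier} against Grafakos's formulation — the bound controls all mixed, non-repeated partial derivatives of $m$ taken along arbitrary subsets of the coordinate directions, which is precisely the Marcinkiewicz condition — and then invoke that theorem directly. This yields the desired $L^p$ boundedness with the constant $C_n(A+\|m\|_{L^\infty})\max\{p,1/(p-1)\}^{6n}$.

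If one wished to reprove the estimate rather than cite it, the standard route would proceed as follows. First, perform a product Littlewood--Paley decomposition: for each coordinate $j\in\{1,\dots,n\}$, let $\pi_\ell^{(j)}$ denote the smooth frequency projection onto the $\ell$-th dyadic annulus in the $j$-th variable, and set $P_{\vec\ell}:=\pi_{\ell_1}^{(1)}\cdots\pi_{\ell_n}^{(n)}$, so that $\sum_{\vec\ell\in\Z^n}P_{\vec\ell}f=f$ for Schwartz $f$. Second, exploit \eqref{eq:decay-multiplier} via an Abel-summation / telescoping argument on each dyadic rectangle to rewrite $m(D)P_{\vec\ell}f$ as a combination of rectangle-localized pieces whose symbols are uniformly bounded in $L^\infty$ by a constant times $A+\|m\|_{L^\infty}$. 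Third, apply the vector-valued Littlewood--Paley square function inequality iteratively in each of the $n$ coordinate directions to upgrade the rectangle-by-rectangle boundedness to the full $L^p$ bound.

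The main work — and the place where the exponent $6n$ arises — lies in this last step. Each of the $n$ one-dimensional Littlewood--Paley equivalences can be established through the Rademacher functions trick combined with the one-dimensional Mikhlin theorem, and the sharp $L^p$ operator norm at each such step is of order $\max\{p,1/(p-1)\}^6$; compounding the $n$ iterations yields the claimed factor $\max\{p,1/(p-1)\}^{6n}$. The hard part is not the architecture, which is completely standard, but the constant tracking: one must verify that the iterated square function estimates do not pick up any additional $p$-dependent losses beyond this exponent. For the present paper it is cleanest simply to cite \cite[Corollary 6.2.5]{Grafakos} and apply it as a black box.
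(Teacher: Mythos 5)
Your proposal matches the paper exactly: the authors state this as a recalled result and cite \cite[Corollary 6.2.5]{Grafakos} without giving a proof, which is precisely your primary route. The additional sketch you offer — product Littlewood--Paley decomposition, telescoping on dyadic rectangles via the mixed-derivative bounds, and iterated vector-valued square-function estimates accounting for the exponent $6n$ — is a faithful description of the standard argument behind Grafakos's corollary, but it is supererogatory here since the paper treats the proposition as a black box.
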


\subsection{Localization results in Fourier space}

Following \cite{KW16,RT21,RT22} and the related literature, we localize our phase indicator in some truncated cones in Fourier space, with the excess $L^2$ mass (outside these truncated cones) being quantified by the total energy rescaled appropriately according to the size of the truncated cones.

For $j\in\{1,2,3\}$ let $m_j$ be as in \eqref{eq:mj} and define $C_{j,\mu,\mu_2}:=\{k\in\R^3 : m_j(k)<\mu^2, |k|<\mu_2\}$. We note that each truncated cone $C_{j,\mu,\mu_2}$ is in turn a union of four truncated cones in the sense that these are the union of 8 connected truncated cones if we remove the vertex.
Analogously, we also define for every $j\in\{1,2,3\}$ and $b\in\B_j$ the truncated cones around a single lamination direction $C_{b,\mu,\mu_2}=\{k\in\R^3 : \frac{|k|^2-|k\cdot b|^2}{|k|^2}<\mu^2, |k|<\mu_2\}$.
We then define the corresponding Fourier multipliers as $\chi_{j,\mu,\mu_2}(D) v=\F^{-1}(\eta_{C_{j,\mu,\mu_2}}\hat v)$, for every $v\in L^2(\R^3)$, where $\eta_{C_{j,\mu,\mu_2}}$ is a \emph{smoothed-out} even characteristic function of the set $C_{j,\mu,\mu_2}$ complying with the decay properties from \eqref{eq:decay-multiplier} in a suitable coordinate system.
Analogous notation is used for $\chi_{b,\mu,\mu_2}(D)$.
Arguing as in \cite{RT21,RT22, RT23}, we obtain the following rough first Fourier localization bound:

\begin{lem}[Lemma 4.5, \cite{RT22}]
\label{lem:loc0}
Let $F\in\conv\{e^{(1)},e^{(2)},e^{(3)}\}$, $E_{el}$ and $E_{surf}$ be defined by \eqref{eq:elastic-energy} and \eqref{eq:total-energy}, respectively. Let $\chi\in BV(\Omega;K)$ and let $\tilde{\chi}$ be as in \eqref{eq:tildechi}.
Then, for every $\mu\in(0,1)$ and $\mu_2>0$ we have
\begin{equation*}
\sum_{j=1}^3\|\tilde{\chi}_{jj}-\chi_{j,\mu,\mu_2}(D)\tilde{\chi}_{jj}\|_{L^2}^2 \lesssim \mu^{-2}E_{el}(\chi)+\mu_2^{-1}(E_{surf}(\chi) + \Per(\Omega)).
\end{equation*}
\end{lem}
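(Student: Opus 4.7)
The plan is to estimate, via Plancherel, the $L^2$-norm of the Fourier transform of $\tilde\chi_{jj}$ against $(1-\eta_{C_{j,\mu,\mu_2}})$, and then split the support of $(1-\eta_{C_{j,\mu,\mu_2}})$ into a conical region (far from the one-dimensional axes in $\B_j$) and a high-frequency region (large $|k|$). The former will be absorbed by the elastic energy through the coercivity estimate recalled just after Lemma~\ref{lem:KKO41}, while the latter will be absorbed by the surface energy through a mollification argument.

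More concretely, I would choose the smoothing so that $\eta_{C_{j,\mu,\mu_2}}\equiv 1$ on $C_{j,\mu/2,\mu_2/2}$ and $0\le \eta_{C_{j,\mu,\mu_2}}\le 1$ globally. By Plancherel,
\[
\|\tilde\chi_{jj}-\chi_{j,\mu,\mu_2}(D)\tilde\chi_{jj}\|_{L^2}^2=\int_{\R^3}\bigl|1-\eta_{C_{j,\mu,\mu_2}}(k)\bigr|^2|\F\tilde\chi_{jj}(k)|^2\,dk,
\]
and the integrand is then supported in $\{m_j(k)\ge\mu^2/4\}\cup\{|k|\ge\mu_2/2\}$ with the prefactor bounded by $1$. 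On the conical piece $\{m_j(k)\ge\mu^2/4\}$, inserting $1\le 4\mu^{-2}m_j(k)$ and invoking the coercivity estimate yields a contribution of order $\mu^{-2}E_{el}(\chi)$, which handles the first term on the right-hand side.

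The main obstacle lies in the high-frequency piece $\{|k|\ge\mu_2/2\}$, since the naive pointwise bound $|\F\tilde\chi_{jj}(k)|\lesssim|D\tilde\chi_{jj}|(\R^3)/|k|$ is not square-integrable in $\R^3$ for large $|k|$. The idea is to replace pointwise Fourier bounds by an $L^2$ estimate on a suitable mollification defect: fix a smooth radial mollifier $\phi_\epsilon$ at scale $\epsilon=2/\mu_2$ such that $|\hat\phi_\epsilon(k)|\le 1/2$ for $|k|\ge\mu_2/2$, so that $|1-\hat\phi_\epsilon(k)|\ge 1/2$ there. Plancherel then gives
\[
\int_{|k|\ge\mu_2/2}|\F\tilde\chi_{jj}(k)|^2\,dk\lesssim \|\tilde\chi_{jj}-\tilde\chi_{jj}\ast\phi_\epsilon\|_{L^2}^2.
\]
The right-hand side is controlled by $L^1$–$L^\infty$ interpolation: the standard $BV$ translation estimate yields $\|\tilde\chi_{jj}-\tilde\chi_{jj}\ast\phi_\epsilon\|_{L^1}\lesssim \epsilon\,|D\tilde\chi_{jj}|(\R^3)$, while $\|\tilde\chi_{jj}-\tilde\chi_{jj}\ast\phi_\epsilon\|_{L^\infty}\lesssim 1$ is trivial, so together they give $\|\tilde\chi_{jj}-\tilde\chi_{jj}\ast\phi_\epsilon\|_{L^2}^2\lesssim \mu_2^{-1}|D\tilde\chi_{jj}|(\R^3)$. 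Since $\tilde\chi_{jj}$ is obtained by extending $\chi_{jj}-F_{jj}$ by zero across $\partial\Omega$, its total variation on $\R^3$ is controlled by $|D\chi_{jj}|(\Omega)$ plus a boundary jump contribution of order $\Per(\Omega)$, so this region contributes $\mu_2^{-1}(E_{surf}(\chi)+\Per(\Omega))$. Summing over $j\in\{1,2,3\}$ concludes the proof.
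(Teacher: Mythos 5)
Your proposal is correct, and it follows essentially the same route as the paper's cited reference [RT22]: Plancherel, decomposition of the complement of the truncated cone into a conical region (controlled via the coercivity estimate $E_{el}(\chi)\gtrsim\sum_j\int m_j|\F\tilde\chi_{jj}|^2\,dk$) and a high-frequency region $\{|k|\ge\mu_2/2\}$ (controlled via $L^1$--$L^\infty$ interpolation of the mollification defect for the $BV$ extension, whose total variation picks up the extra $\Per(\Omega)$ term from the jump across $\partial\Omega$). The only minor point is that the choice $\epsilon=2/\mu_2$ together with the requirement $|\hat\phi_\epsilon|\le 1/2$ for $|k|\ge\mu_2/2$ implicitly pins down the normalization of the reference mollifier, but this is harmless up to the universal constants hidden in $\lesssim$.
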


These estimates encode the high frequency and coercivity bounds from the surface and elastic energies.

Next, we deduce low frequency bounds, similar in spirit as in \cite{RT21,RRT23,KW16}.
Here we use a particularly convenient form of these bounds which originates from a more general auxiliary result in \cite{RRTT23}. It can be viewed as a continuum version of estimates which had been used in the periodic setting in \cite {RT22}.

\begin{lem}[Proposition 3.1(i) \cite{RRTT23}]\label{lem:low-freq}
Let $\Omega \subset \R^n$ be an open, bounded set.
Let $f\in L^2(\R^3)$  with $\supp(f) \subset \Omega$ and $b\in\Sp^2$.
Then for every $\nu>0$ it holds
$$
\int_{|k|^2-|k\cdot b|^2\le\nu^2}|\hat f|^2 dk \lesssim \nu^2 \int_{\R^3}|\hat f|^2 dk.
$$
\end{lem}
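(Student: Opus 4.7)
The plan is to exploit the compact support of $f$ via a partial Fourier transform in the direction $b$, combined with a pointwise Cauchy--Schwarz estimate in the perpendicular variables. First, I would fix an orthonormal basis $\{b, e_2, e_3\}$ of $\R^3$ and split coordinates as $x = x_1 b + y$ and $k = k_1 b + \ell$ with $y, \ell \in b^{\perp}$. In these coordinates the identity $|k|^2 - |k\cdot b|^2 = |\ell|^2$ immediately shows that the region of integration on the left-hand side of the claim is the solid cylinder $\{|\ell| \le \nu\}$ of radius $\nu$ around the $b$-axis, of area $\pi \nu^2$ in the perpendicular slice.

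Next I would apply the partial Fourier transform only in the $x_1$ variable, setting $g(k_1, y) := \F_{x_1} f(k_1, y)$. Plancherel in $x_1$ gives $\|g\|_{L^2(\R\times \R^2)} = \|f\|_{L^2(\R^3)}$, and the full Fourier transform factorizes as $\hat f(k_1, \ell) = \F_y g(k_1, \cdot)(\ell)$. The key observation is that for each fixed $k_1$, the function $y\mapsto g(k_1, y)$ has support in the projection $\pi_{b^\perp}(\Omega)$, which has finite 2D area depending only on $\Omega$ (uniformly in $b \in \Sp^2$, since it is controlled by $\diam(\Omega)^2$). Applying Cauchy--Schwarz to $\F_y g(k_1, \cdot)(\ell) = (2\pi)^{-1}\int_{\pi_{b^\perp}(\Omega)} g(k_1, y) e^{-i\ell \cdot y}\, dy$ then yields the pointwise bound
$$
|\hat f(k_1, \ell)|^2 \lesssim |\pi_{b^\perp}(\Omega)|\, \|g(k_1, \cdot)\|_{L^2(\R^2)}^2,
$$
uniformly in $\ell$.

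To conclude, I would integrate this pointwise estimate over the disc $\{|\ell|\le\nu\} \subset \R^2$, which contributes an area factor $\pi\nu^2$, and then integrate in $k_1 \in \R$. Plancherel in $x_1$ once more collapses $\int_{\R} \|g(k_1, \cdot)\|_{L^2(\R^2)}^2\, dk_1$ to $\|f\|_{L^2(\R^3)}^2 = \|\hat f\|_{L^2(\R^3)}^2$. This produces the desired inequality with implicit constant proportional to $|\pi_{b^\perp}(\Omega)|$. There is no deep obstacle here: the argument rests only on Plancherel and Cauchy--Schwarz, and the only mild bookkeeping concerns tracking the normalization constant from the Fourier convention used in the paper and the uniformity of the projected area as $b$ varies over $\Sp^2$.
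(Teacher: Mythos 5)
Your argument is correct and is essentially the same as the paper's: both proofs pass to an adapted orthonormal frame, take the partial Fourier transform in the $b$-direction, use the compact support of $f$ (via Cauchy--Schwarz / the $L^\infty$-$L^1$ Fourier bound) to get a uniform pointwise bound on $\hat f$ over the cylinder $\{|\ell|\le\nu\}$, and close with Plancherel. The only cosmetic difference is that the paper splits your single Cauchy--Schwarz step into an $L^\infty$-$L^1$ Fourier estimate followed by H\"older.
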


\begin{proof}
For convenience, we recall the short proof. To this end, we set $v_1:=b$ and extend this to an orthonormal basis $v_1,v_2,v_3$. With slight abuse of notation, we refer to the coordinates in this basis as $k_1:=k \cdot v_1, k_2:=k\cdot v_2, k_3:= k \cdot v_3$. Then,
\begin{align*}
\int_{|k|^2-|k\cdot b|^2\le\nu^2}|\hat f|^2 dk 
&= \int_{k_2^2 + k_3^2\le\nu^2}|\hat f|^2 dk  \lesssim \nu^2 \int\limits_{\R}  \Big(\sup\limits_{(k_2, k_3) \in \R^2} |\hat f(k_1,k_2,k_3)|\Big)^2 d k_1\\
&\lesssim \nu^2 \int\limits_{\R} \Big(\int\limits_{\R^2} |\mathcal{F}_{k_1} f(k_1,x_2,x_3)|  dx_2 dx_3\Big)^2 dk_1\\
&\lesssim \diam(\Omega) \nu^2 \int\limits_{\R} \int\limits_{\R^2} |\mathcal{F}_{k_1} f(k_1,x_2,x_3) |^2 d k_1 dx_2 dx_3
\lesssim \nu^2 \int_{\R^3}|\hat f|^2 dk.
\end{align*}
Here we have used the notation $\mathcal{F}_{k_1} f$ for the Fourier transform only in $k_1$ and invoked the $L^{\infty}$-$L^{1}$ estimate for the Fourier transform together with Hölder's inequality and the fact that $\diam(\Omega)$ is bounded. 
\end{proof}

\subsection{Reducing the relevant frequency region}
Finally, with the previous preparatory results in hand, we turn to a second frequency localization argument which is valid for second order laminates and provides the key step in the proof of Theorems \ref{thm:main_zero} and \ref{thm:main_general}.

\begin{prop}[A second conical localization argument] 
\label{prop:loc1}
Let $F\in\conv\{e^{(1)},e^{(2)},e^{(3)}\}$, $\chi \in BV(\Omega;K)$, $\tilde{\chi}$ as in \eqref{eq:tildechi}, $E_{el}$, $E_{surf}$ as in \eqref{eq:elastic-energy} and \eqref{eq:total-energy}, respectively.
There exists a constant $\mu_0=\mu_0(\B)\in(0,1)$ such that for every $\mu\in(0,\mu_0)$,
$\mu_2 > \mu_3>0$, there holds
\begin{align}
\label{eq:cone-reduction1}
\begin{split}
&\Big|\int_{\R^3}\tilde{\chi}_{11}\tilde{\chi}_{22}\tilde{\chi}_{33}dx \Big| \\
&\lesssim \sum_{\substack{(b_1, b_2, b_3)\in \tilde{\B}}}\Big|\int_{\R^3}(\chi_{b_1,\mu,[\mu_3,\mu_2]}(D)\tilde\chi_{11})(\chi_{b_2,\mu,[\mu_3,\mu_2]}(D)\tilde\chi_{22})(\chi_{b_3,\mu,[\mu_3,\mu_2]}(D)\tilde\chi_{33})dx \Big| \\
& \quad + (\mu^{-2}E_{el}(\chi)+\mu_2^{-1}(E_{surf}(\chi) + \Per(\Omega)))^{\frac{1}{2}} + \mu\mu_3\|\tilde\chi\|_{L^2}.
\end{split}
\end{align}
Here $\chi_{b_j,\mu,[\mu_3,\mu_2]}(D):= \chi_{b_j,\mu, \mu_2}(D)-\chi_{b_j,\mu,\mu_3}(D)$ for $j\in\{1,2,3\}$ and 
\begin{align}
\label{eq:index}
\tilde\B:=\left\{(b_1,b_2,b_3) \in \B_1\times\B_2\times\B_3 : b_1,b_2\in\B_1\cap\B_2 \text{ or } b_1,b_3\in\B_1\cap\B_3 \text{ or } b_2,b_3\in\B_2\cap\B_3
\right\}.
\end{align}
\end{prop}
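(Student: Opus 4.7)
The plan is a two-step Fourier localization on the trilinear integral $I:=\int_{\R^3}\tilde\chi_{11}\tilde\chi_{22}\tilde\chi_{33}dx$, reducing it to a sum over $\tilde\B$ modulo the three stated error terms. First I would perform a telescoping substitution replacing each $\tilde\chi_{jj}$ by its cone-localized counterpart $\chi_{j,\mu,\mu_2}(D)\tilde\chi_{jj}$; the three resulting differences lie in $L^2$ and are controlled via Lemma \ref{lem:loc0}, while the remaining two factors are paired in $L^2\cdot L^\infty$ using $\|\tilde\chi\|_{L^\infty}\lesssim 1$. A Cauchy--Schwarz estimate then produces the first error $(\mu^{-2}E_{el}(\chi)+\mu_2^{-1}(E_{surf}(\chi)+\Per(\Omega)))^{1/2}$.

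Next I would further split $C_{j,\mu,\mu_2}$ into single-direction cones: an explicit computation based on \eqref{eq:normals} shows that any two directions in $\B_j$ form an angle of at least $60^\circ$, so for $\mu<\mu_0(\B)$ the double cones $C_{b,\mu,\mu_2}$ with $b\in\B_j$ are pairwise disjoint and can be equipped with a compatible smooth partition, yielding $\chi_{j,\mu,\mu_2}(D)=\sum_{b\in\B_j}\chi_{b,\mu,\mu_2}(D)$. Applying Lemma \ref{lem:low-freq} with $\nu=\mu\mu_3$ gives $\|\chi_{b,\mu,\mu_3}(D)\tilde\chi_{jj}\|_{L^2}\lesssim\mu\mu_3\|\tilde\chi\|_{L^2}$, and a second telescoping substitution $\chi_{b,\mu,\mu_2}(D)\leadsto\chi_{b,\mu,[\mu_3,\mu_2]}(D)$ produces the error $\mu\mu_3\|\tilde\chi\|_{L^2}$. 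At this stage $I$ is rewritten, modulo the two errors above, as a sum over all $(b_1,b_2,b_3)\in\B_1\times\B_2\times\B_3$ of the frequency-localized trilinear products.

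The final step is to argue that triples $(b_1,b_2,b_3)\notin\tilde\B$ — i.e., the two cyclic configurations with each $b_i$ drawn from a distinct twin plane — contribute only to the error terms. Using Plancherel, the integrand is supported on $k_1+k_2+k_3=0$ with $k_j\in C_{b_j,\mu,[\mu_3,\mu_2]}$; writing $k_j=t_jb_j+s_j$ with $|t_j|\sim|k_j|\gtrsim\mu_3$ and $|s_j|\leq\mu\mu_2$, the constraint forces $\sum_j t_jb_j=O(\mu\mu_2)$. For linearly independent non-$\tilde\B$ triples this is incompatible with $|t_j|\gtrsim\mu_3$ once $\mu$ is small enough, so the corresponding integrals vanish. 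The main obstacle is the case of linearly dependent non-$\tilde\B$ triples such as $(b_{12},b_{23},b_{13})$, for which the relation $b_{12}=b_{23}+b_{13}$ permits nontrivial resonant Fourier configurations; to eliminate these I plan to exploit the algebraic identity $\sum_{j=1}^3\tilde\chi_{jj}=0$ (which follows from $\sum_j\chi_j=1$ and $\tr F=0$) applied after Fourier localization in the form $\chi_{b_i,\mu,[\mu_3,\mu_2]}(D)\tilde\chi_{ii}=-\sum_{k\ne i}\chi_{b_i,\mu,[\mu_3,\mu_2]}(D)\tilde\chi_{kk}$, which rewrites each problematic term as a combination of $\tilde\B$-type contributions; alternatively, a Cauchy--Schwarz bound exploiting the narrowness of the resonant region (which has $\R^6$-measure $\sim\mu^5\mu_2^6$) should suffice to absorb any remaining contribution into the stated errors under a suitable relationship between $\mu_3$, $\mu_2$, and $\mu$.
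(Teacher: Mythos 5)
Your high-level structure (telescope to cone-localized pieces, accumulate the two stated error terms via Lemma~\ref{lem:loc0} and Lemma~\ref{lem:low-freq}, then identify the surviving trilinear products) matches the paper, and you correctly single out the genuine obstacle: the linearly dependent ``cyclic'' triples such as $(b_{12},b_{23},b_{13})$ with $b_{12}=b_{23}+b_{13}$ support exactly resonant Fourier configurations. However, neither of your two proposed resolutions closes the gap, and one further structural issue means your route cannot prove the proposition as stated.

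First, the resolution via the post-localization identity $\chi_{b_i,\mu,[\mu_3,\mu_2]}(D)\tilde\chi_{ii}=-\sum_{k\ne i}\chi_{b_i,\mu,[\mu_3,\mu_2]}(D)\tilde\chi_{kk}$ does not land in the stated form. Applying it to, say, the factor $\chi_{b_{12},\mu,[\mu_3,\mu_2]}(D)\tilde\chi_{11}$ in the triple $(b_{12},b_{23},b_{13})$ produces integrals in which $\tilde\chi_{22}$ or $\tilde\chi_{33}$ appears twice; these are not of the form $\sum_{\tilde\B}(\chi_{b_1,\cdot}\tilde\chi_{11})(\chi_{b_2,\cdot}\tilde\chi_{22})(\chi_{b_3,\cdot}\tilde\chi_{33})$, so you have not reduced to the right-hand side of \eqref{eq:cone-reduction1}. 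Second, the measure-based Cauchy--Schwarz fallback fails for the usual reason that makes the trilinear quantity interesting: for a genuine double laminate the Fourier mass of $\tilde\chi_{jj}$ concentrates precisely on the resonant cones, so smallness of the ambient measure does not control the integral. Third, your resonance/support argument for the linearly \emph{independent} non-$\tilde\B$ triples requires $\mu_3\gtrsim\mu\mu_2$; that scaling relation is a hypothesis of Proposition~\ref{prop:loc2}, not of Proposition~\ref{prop:loc1}, which only assumes $\mu_2>\mu_3>0$. So under the hypotheses you are given, even the linearly independent discards are not justified.

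The paper avoids all of this by applying the trace-free constraint $\tilde\chi_{11}+\tilde\chi_{22}+\tilde\chi_{33}=0$ \emph{before} any frequency localization, through a particular decomposition \eqref{eq:lam-dec2} (following \cite[Lemma~4.3, Proposition~4.4]{KKO13}): each $\tilde\chi_{jj}$ is written as a signed sum of four auxiliary functions $f_b$, $b\in\B_j$, where each $f_b$ itself combines two of the diagonal components localized near $b$. With this sign structure, the non-$\tilde\B$ triples pair up exactly, $f_{b_3}^{(1)}f_{b_1}^{(2)}f_{b_2}^{(3)}=-f_{b_1}^{(1)}f_{b_2}^{(2)}f_{b_3}^{(3)}$, and cancel algebraically; only the $\tilde\B$ triples remain, and the iterated telescoping/commutator estimates are then applied to those. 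This exact cancellation step has no analogue in your proposal and is the missing ingredient.
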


In what follows the trilinear quantity on the left hand side of inequality \eqref{eq:cone-reduction1} which had first been identified in \cite{KKO13} will play a major role in encoding non-trivial information for laminates of second order, c.f. Remark \ref{rmk:trilin}.

\begin{proof}
We split the proof into three parts.
In the first two steps we write each state $\tilde\chi_{jj}$ as a sum of functions which contain information on the oscillations in a specific lamination direction $b_j\in\B_j$, following the principles in \cite[Section 4.3]{KKO13}.
The third and final step is then devoted to the proof of the result.

\emph{Step 1: Decomposition into  the most relevant lamination directions.}
Let $\{\tilde\eta_b\}_{b\in\B}$ be a partition of unity of $\Sp^2$, with $\tilde\eta_{b}(-\hat k)=\tilde\eta_{b}(\hat k)$ and  $\tilde\eta_b:\Sp^2\to[0,1]$ be $C^\infty$ functions being equal to $1$ in a neighbourhood of $b$.
Denote then $\eta_b(k):=\tilde\eta_b(\frac{k}{|k|})$ and define
\begin{equation}\label{eq:lam-dec}
f_{j,b}:=\eta_b(D)\tilde\chi_{jj}.
\end{equation}
These functions satisfy
\begin{equation*}
\begin{split}
	\tilde\chi_{11} &= f_{1,b_{12}}+f_{1,b_{21}}+f_{1,b_{13}}+f_{1,b_{31}}+f_{1,b_{23}}+f_{1,b_{32}}, \\
	\tilde\chi_{22} &= f_{2,b_{12}}+f_{2,b_{21}}+f_{2,b_{13}}+f_{2,b_{31}}+f_{2,b_{23}}+f_{2,b_{32}}, \\
	\tilde\chi_{33} &= f_{3,b_{12}}+f_{3,b_{21}}+f_{3,b_{13}}+f_{3,b_{31}}+f_{3,b_{23}}+f_{3,b_{32}},		
\end{split}
\end{equation*}
and, by Proposition \ref{prop:grafakos}, for any $p>1$ it holds $\|f_{j,b_j}\|_{L^p}\lesssim 1$ for every $j\in\{1,2,3\}$, $b_j\in\B_j$.

Using that by the vanishing trace constraint for the matrices $e^{(j)}$, we have $\tilde{\chi}_{11} + \tilde{\chi}_{22} + \tilde{\chi}_{33} = 0$,
we further observe that we can reduce the system above to the following decomposition
\begin{equation}\label{eq:lam-dec2}
\begin{split}
	\tilde\chi_{11} &= f_{b_{12}}+f_{b_{21}}-f_{b_{13}}-f_{b_{31}}, \\
	\tilde\chi_{22} &= -f_{b_{12}}-f_{b_{21}}+f_{b_{23}}+f_{b_{32}}, \\
	\tilde\chi_{33} &= f_{b_{13}}+f_{b_{31}}-f_{b_{23}}-f_{b_{32}},
\end{split}
\end{equation}
where
\begin{eqnarray*}\label{eq:lam-dec3}
	&f_{b_{12}} := f_{1,b_{12}}-f_{2,b_{31}}, &f_{b_{21}} := f_{1,b_{21}}-f_{2,b_{13}},\\
	&f_{b_{23}} := f_{2,b_{23}}-f_{3,b_{12}}, &f_{b_{32}} := f_{2,b_{32}}-f_{3,b_{21}}, \\
	&f_{b_{31}} := f_{3,b_{31}}-f_{1,b_{23}}, &f_{b_{13}} := f_{3,b_{13}}-f_{1,b_{32}},
\end{eqnarray*}
see also \cite[Lemma 4.3 and Proposition 4.4]{KKO13}.

Moreover, straightforward applications of Lemmas \ref{lem:loc0} and \ref{lem:low-freq} yield that for any $b\in\B$
\begin{equation}\label{cor:loc0}
\|f_{b}-\chi_{b,\mu,\mu_2}(D)f_{b}\|_{L^2}^2 \lesssim (\mu^{-2}+1)E_{el}(\chi)+\mu_2^{-1}(E_{surf}(\chi) + \Per(\Omega)).
\end{equation}
and
\begin{equation}\label{eq:low-freq}
\int_{|k|^2-|k\cdot b|^2\le\nu^2}|\hat f_{b}|^2 dk \lesssim \nu^2,
\end{equation}
respectively.
For later use, we notice that we can find a constant $\mu_0>0$ depending on $\B$ such that $\chi_{b,\mu,\mu_2}<\eta_b$ and $\eta_{b'}\chi_{b,\mu,\mu_2}=0$ for every $b,b'\in\B$, $b'\neq b$, $0<\mu < \mu_0$.

\emph{Step 2: Cancellations.}
We first note that, by the decomposition \eqref{eq:lam-dec2} we have
\begin{align}
\label{eq:sum_aux}
\Big|\int_{\R^3}\tilde\chi_{11}\tilde\chi_{22}\tilde\chi_{33} dx\Big| = \Big|\sum_{(b_1,b_2,b_3)\in\B_1\times\B_2\times\B_3}\int_{\R^3}f_{b_1}^{(1)}f_{b_2}^{(2)}f_{b_3}^{(3)} dx\Big|,
\end{align}
where we denoted by $f_b^{(j)}$ the function $f_b$ with the corresponding sign which it carries in the decomposition \eqref{eq:lam-dec2} of the state $\tilde\chi_{jj}$.
Using this, we next observe that the right-hand side of \eqref{eq:sum_aux} can be simplified via some cancellations.
Indeed, we observe that for any triple product $f_{b_1}^{(1)}f_{b_2}^{(2)}f_{b_3}^{(3)}$ with $(b_1,b_2,b_3)\in(\B_1\cap\B_2)\times(\B_2\cap\B_3)\times(\B_1\cap\B_3)$ the sum \eqref{eq:sum_aux} also contains a triple of the form $f_{b_3}^{(1)}f_{b_1}^{(2)}f_{b_2}^{(3)}=-f_{b_1}^{(1)}f_{b_2}^{(2)}f_{b_3}^{(3)}$. These contributions hence cancel out.
Recalling the definition of $\tilde{\mathcal{B}}$ from \eqref{eq:index}, the sum from \eqref{eq:sum_aux} can hence be rewritten and estimated as
\begin{align}
\label{eq:step1bound}
\Big|\int_{\R^3}\tilde\chi_{11}\tilde\chi_{22}\tilde\chi_{33} dx\Big| 
&= \Big|\sum_{(b_1,b_2,b_3)\in\tilde\B}\int_{\R^3}f_{b_1}^{(1)}f_{b_2}^{(2)}f_{b_3}^{(3)} dx\Big|
 \leq \sum_{(b_1,b_2,b_3)\in\tilde\B} \Big|\int_{\R^3}f_{b_1}^{(1)}f_{b_2}^{(2)}f_{b_3}^{(3)} dx\Big|.
\end{align}

\emph{Step 3: Commutator arguments.}
Now, using  iterated triangle inequalities, we claim that it suffices to study the expression from the right-hand side \eqref{eq:step1bound} localized to the relevant truncated cones. As we use no further cancellation effects for which we need to track the superscript in the notation $f_{b_j}^{(k)}$, in what follows below we drop the superscript. 
For any fixed triple $(b_1,b_2,b_3)\in \tilde\B$, applying first the triangle inequality and then H\"older's inequality, we infer that

\begin{align}
\label{eq:triangle1}
\begin{split}
\Big|\int_{\R^3}f_{b_1}f_{b_2}f_{b_3} dx \Big| &\le \Big|\int_{\R^3}(f_{b_1}-\chi_{b_1,\mu,\mu_2}(D)f_{b_1})f_{b_2}f_{b_3}dx \Big| \\
& \qquad +\Big|\int_{\R^3}(\chi_{b_1,\mu,\mu_2}(D)f_{b_1})f_{b_2}f_{b_3}dx \Big| \\
& \le \|f_{b_1}-\chi_{b_1,\mu,\mu_2}(D)f_{b_1}\|_{L^2}\|f_{b_2}\|_{L^4}\|f_{b_3}\|_{L^4} \\
& \qquad + \Big|\int_{\R^3}(\chi_{b_1,\mu,\mu_2}(D)f_{b_1})f_{b_2}f_{b_3}dx \Big|\\
& \leq C((\mu^{-2}+1)E_{el}(\chi) + \mu_2^{-1}(E_{surf}(\chi) + \Per(\Omega)))^{\frac{1}{2}}\\
&\qquad + \Big|\int_{\R^3}(\chi_{b_1,\mu,\mu_2}(D)f_{b_1})f_{b_2}f_{b_3}dx \Big|.
\end{split}
\end{align}

Here in the last estimate we used the energy estimates from \eqref{cor:loc0}. 
We thus focus on the second term on the right-hand side of \eqref{eq:triangle1}. For this we obtain

\begin{align}
\label{eq:triangle2}
\begin{split}
\Big|\int_{\R^3}(\chi_{b_1,\mu,\mu_2}(D)f_{b_1})f_{b_2}f_{b_3}dx \Big| &\le \Big|\int_{\R^3}(\chi_{b_1,\mu,\mu_2}(D)f_{b_1})(f_{b_2}-\chi_{b_2,\mu,\mu_2}(D)f_{b_2})f_{b_3}dx \Big| \\
& \qquad +\Big|\int_{\R^3}(\chi_{b_1,\mu,\mu_2}(D)f_{b_1})(\chi_{b_2,\mu,\mu_2}(D)f_{b_2})f_{b_3}dx \Big| \\
& \le \|f_{b_2}-\chi_{b_2,\mu,\mu_2}(D)f_{b_2}\|_{L^2}\|(\chi_{b_1,\mu,\mu_2}(D)f_{b_1})f_{b_3}\|_{L^2} \\
& \qquad + \Big|\int_{\R^3}(\chi_{b_1,\mu,\mu_2}(D)f_{b_1})(\chi_{b_2,\mu,\mu_2}(D)f_{b_2})f_{b_3}dx \Big| \\
& \le \|f_{b_2}-\chi_{b_2,\mu,\mu_2}(D)f_{b_2}\|_{L^2}\|\chi_{b_1,\mu,\mu_2}(D)f_{b_1}\|_{L^4}\|f_{b_3}\|_{L^4} \\
& \qquad + \Big|\int_{\R^3}(\chi_{b_1,\mu,\mu_2}(D)f_{b_1})(\chi_{b_2,\mu,\mu_2}(D)f_{b_2})f_{b_3}dx \Big|\\
& \lesssim \|f_{b_2}-\chi_{b_2,\mu,\mu_2}(D)f_{b_2}\|_{L^2}\|f_{b_1}\|_{L^4}\|f_{b_3}\|_{L^4} \\ 
& \qquad + \Big|\int_{\R^3}(\chi_{b_1,\mu,\mu_2}(D)f_{b_1})(\chi_{b_2,\mu,\mu_2}(D)f_{b_2})f_{b_3}dx \Big| \\
& \leq C((\mu^{-2}+1)E_{el}(\chi) + \mu_2^{-1}(E_{surf}(\chi) + \Per(\Omega)))^{\frac{1}{2}} \\
& \qquad + \Big|\int_{\R^3}(\chi_{b_1,\mu,\mu_2}(D)f_{b_1})(\chi_{b_2,\mu,\mu_2}(D)f_{b_2})f_{b_3}dx \Big|.
\end{split}
\end{align}

Again, in the last estimate, for the first term on the right-hand side we used the bound from estimate  \eqref{cor:loc0}
whereas in the second last line, we used Proposition \ref{prop:grafakos} to infer $\|\chi_{b_1,\mu,\mu_2}(D)f_{b_1}\|_{L^4}\lesssim\|f_{b_1}\|_{L^4}$.
As above, we thus only consider the second contribution on the right-hand side of \eqref{eq:triangle2}. 
Then,
\begin{align}
\label{eq:triangle3}
\begin{split}
&\Big|\int_{\R^3}(\chi_{b_1,\mu,\mu_2}(D)f_{b_1})(\chi_{b_2,\mu,\mu_2}(D)f_{b_2})f_{b_3}dx \Big| \\
& \quad \le \Big|\int_{\R^3}(\chi_{b_1,\mu,\mu_2}(D)f_{b_1})(\chi_{b_2,\mu,\mu_2}(D)f_{b_2})(f_{b_3}-\chi_{b_3,\mu,\mu_2}(D)f_{b_3})dx \Big| \\
& \qquad +\Big|\int_{\R^3}(\chi_{b_1,\mu,\mu_2}(D)f_{b_1})(\chi_{b_2,\mu,\mu_2}(D)f_{b_2})(\chi_{b_3,\mu,\mu_2}(D)f_{b_3})dx \Big| \\
& \quad \le \|(\chi_{b_1,\mu,\mu_2}(D)f_{b_1})(\chi_{b_2,\mu,\mu_2}(D)f_{b_2})\|_{L^2}\|f_{b_3}-\chi_{b_3,\mu,\mu_2}(D)f_{b_3}\|_{L^2} \\
& \qquad +\Big|\int_{\R^3}(\chi_{b_1,\mu,\mu_2}(D)f_{b_1})(\chi_{b_2,\mu,\mu_2}(D)f_{b_2})(\chi_{b_3,\mu,\mu_2}(D)f_{b_3})dx\Big| \\
& \quad \le \|\chi_{b_1,\mu,\mu_2}(D)f_{b_1}\|_{L^4}\|\chi_{b_2,\mu,\mu_2}(D)f_{b_2}\|_{L^4}\|f_{b_3}-\chi_{b_3,\mu,\mu_2}(D)f_{b_3}\|_{L^2} \\
& \qquad +\Big|\int_{\R^3}(\chi_{b_1,\mu,\mu_2}(D)f_{b_1})(\chi_{b_2,\mu,\mu_2}(D)f_{b_2})(\chi_{b_3,\mu,\mu_2}(D)f_{b_3})dx \Big| \\
& \quad \lesssim 
\|f_{b_3}-\chi_{b_3,\mu,\mu_2}(D)f_{b_3}\|_{L^2}\|f_{b_1}\|_{L^4}\|f_{b_2}\|_{L^4} \\& \qquad + \Big|\int_{\R^3}(\chi_{b_1,\mu,\mu_2}(D)f_{b_1})(\chi_{b_2,\mu,\mu_2}(D)f_{b_2})(\chi_{b_3,\mu,\mu_2}(D)f_{b_3})dx \Big| \\
& \quad \le C((\mu^{-2}+1)E_{el}(\chi) + \mu_2^{-1}(E_{surf}(\chi) + \Per(\Omega)))^{\frac{1}{2}} \\
& \qquad +\Big|\int_{\R^3}(\chi_{b_1,\mu,\mu_2}(D)\tilde\chi_{11})(\chi_{b_2,\mu,\mu_2}(D)\tilde\chi_{22})(\chi_{b_3,\mu,\mu_2}(D)\tilde\chi_{33})dx \Big|.
\end{split}
\end{align}
In the second last line, we again used 
Proposition \ref{prop:grafakos} to get $\|\chi_{b_j,\mu,\mu_2}(D)f_{b_j}\|_{L^4}\lesssim\|f_{b_j}\|_{L^4}$.
In the last line, we exploited the energy estimates \eqref{cor:loc0} and used the fact that $\chi_{b_j,\mu,\mu_2}(D)f_{b_j}=\chi_{b_j,\mu,\mu_2}(D)\tilde\chi_{jj}$ for every $j\in\{1,2,3\}$ and $b_{j}\in\B_j$ (cf.\ Step 1).
It thus remains to prove that 
\begin{equation}
\label{eq:triangle4}
\begin{split}
&\Big|\int_{\R^3}(\chi_{b_1,\mu,\mu_2}(D)\tilde\chi_{11})(\chi_{b_2,\mu,\mu_2}(D\tilde\chi_{22})(\chi_{b_3,\mu,\mu_2}(D)\tilde\chi_{33})dx \Big|\\
&\leq \Big|\int_{\R^3}(\chi_{b_1,\mu,[\mu_3,\mu_2]}(D)\tilde\chi_{11})(\chi_{b_2,\mu,[\mu_3,\mu_2]}(D)\tilde\chi_{22})(\chi_{b_3,\mu,[\mu_3,\mu_2]}(D)\tilde\chi_{33})dx \Big| + C \mu\mu_3\|\tilde\chi\|_{L^2}.
\end{split}
\end{equation}
Indeed, this follows by using the low frequency bounds from the estimate \eqref{eq:low-freq} and noticing that $C_{b_j,\mu,\mu_3}\subset\{k\in\R^3 : |k|^2-|k\cdot b_j|^2\lesssim(\mu\mu_3)^2\}$.
Then, exploiting this together with the triangle inequality and Calder\'on-Zygmund estimates, similarly as in the arguments above,
we obtain \eqref{eq:triangle4}.
Finally, combining \eqref{eq:triangle1}--\eqref{eq:triangle4} and by carrying
out these estimates for every possible combination of vectors $(b_1,b_2,b_3)\in
\tilde\B$, we infer the desired result.
\end{proof}

Next, we seek to further reduce the relevant localization for the (truncated) cones. 
To do so we exploit the fact that we cannot have all the three states localized in the same (truncated) cone at the same time.

\begin{prop}[A final conical localization argument] 
\label{prop:loc2}
Let $F\in\conv\{e^{(1)},e^{(2)},e^{(3)}\}$, $\chi \in BV(\Omega;K)$, $\tilde{\chi}$ as in \eqref{eq:tildechi}, $E_{el}$, $E_{surf}$ as in \eqref{eq:elastic-energy} and \eqref{eq:total-energy} respectively.
Let $b_1\in\B_1$, $b_2\in\B_2$ and $b_3\in\B_3$ and assume that either
 $\{b_1,b_2,b_3\}\subset \R^3$ forms a basis or is such that $b_i \neq b_j = b_k$ for $i,j,k\in\{1,2,3\}$, $i\neq j$, $j\neq k$, $i\neq k$. Then,
 there exist constants $\mu_0=\mu_0(b_1,b_2,b_3)\in(0,1)$ and $M=M(b_1,b_2,b_3)>4$ such that for every $\mu\in(0,\mu_0)$, $\mu_2>0$ and $\mu_3 = M\mu \mu_2$  it holds
 \begin{align}
 \label{eq:low_freq_loc}
\Big|\int_{\R^3}(\chi_{b_1,\mu,[\mu_3,\mu_2]}(D)\tilde{\chi}_{11})(\chi_{b_2,\mu,[\mu_3,\mu_2]}(D)\tilde{\chi}_{22})(\chi_{b_3,\mu,[\mu_3,\mu_2]}(D)\tilde{\chi}_{33})dx\Big|  =0.
 \end{align}
\end{prop}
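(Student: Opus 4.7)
The strategy is to reduce the vanishing statement to a purely geometric question about Fourier supports. By Plancherel's theorem (applied to the product of three functions, via the convolution theorem), for any $g_1, g_2, g_3 \in L^2(\R^3)$ one has the identity
\begin{equation*}
\int_{\R^3} g_1 g_2 g_3\, dx = (2\pi)^{-3/2}\int_{\R^3\times \R^3} \hat g_1(k_1)\hat g_2(k_2)\hat g_3(-k_1-k_2) \, dk_1 dk_2.
\end{equation*}
Applied to $g_j := \chi_{b_j,\mu,[\mu_3,\mu_2]}(D)\tilde\chi_{jj}$, whose Fourier transforms are supported in the (smoothed) truncated cone $S_j := \supp(\eta_{C_{b_j,\mu,\mu_2}} - \eta_{C_{b_j,\mu,\mu_3}})$, the integral in \eqref{eq:low_freq_loc} vanishes as soon as we show that
\begin{equation*}
\{(k_1,k_2,k_3) \in S_1 \times S_2 \times S_3 : k_1+k_2+k_3=0\} = \emptyset.
\end{equation*}

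Hence the heart of the proof reduces to a geometric claim. Decomposing $k_j = (k_j\cdot b_j) b_j + k_j^{\perp}$ with $k_j^\perp \perp b_j$, the membership $k_j \in S_j$ (up to controlling the smoothing by a universal factor absorbed into $M$) yields the two-sided bounds $|k_j^\perp| \lesssim \mu |k_j|$, $|k_j| \in [\mu_3 / 2, 2 \mu_2]$ say, and therefore $|k_j \cdot b_j| \geq \sqrt{1-\mu^2}\,|k_j| \gtrsim \mu_3 = M \mu \mu_2$. In the \emph{basis case}, writing $T$ for the invertible linear map $T e_j = b_j$, the relation $k_1+k_2+k_3=0$ rewrites as $\sum_j (k_j\cdot b_j) b_j = -\sum_j k_j^\perp$, so applying $T^{-1}$ gives $\max_j |k_j\cdot b_j| \leq 3\|T^{-1}\|\mu \mu_2$. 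Choosing $M > 6\|T^{-1}\|$ and $\mu_0$ small enough yields the desired contradiction with the lower bound on $|k_j\cdot b_j|$, which depends only on $(b_1,b_2,b_3)$.

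In the \emph{repeated case}, say $b_2 = b_3 \neq b_1$, I will use that distinct vectors from $\B_1 \cup \B_2 \cup \B_3$ are pairwise linearly independent (as is easily read off from \eqref{eq:normals}), so $b_1$ and $b_2$ span a 2-plane. Choose $n := b_1 - (b_1\cdot b_2) b_2 \neq 0$, which satisfies $n\cdot b_2 = n\cdot b_3 = 0$ and $n\cdot b_1 = 1 - (b_1\cdot b_2)^2 > 0$. Taking the inner product of $k_1+k_2+k_3 = 0$ with $n$ kills the $b_2$- and $b_3$-components and leaves only the perpendicular contributions together with $(n\cdot b_1)(k_1\cdot b_1)$, giving $|k_1 \cdot b_1| \lesssim (1-(b_1\cdot b_2)^2)^{-1/2} \mu \mu_2$. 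Once again this contradicts $|k_1 \cdot b_1| \gtrsim M\mu\mu_2$ for $M$ sufficiently large, so the common Fourier support is empty and the integral vanishes.

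The main (purely technical) obstacle will be accounting for the smoothing of the cut-off functions $\eta_{C_{b_j,\mu,\mu_i}}$, since their supports strictly exceed the sharp truncated cones $C_{b_j,\mu,\mu_i}$. This is handled by fixing the smoothing once and for all so that the effective support of $g_j$ still lies in an enlarged cone $\{|k_j - (k_j\cdot b_j)b_j| \leq c_0 \mu|k_j|,\ c_0^{-1}\mu_3 \leq |k_j| \leq c_0 \mu_2\}$ with a universal constant $c_0$; the geometric estimates above remain valid with this $c_0$ absorbed in the threshold $M = M(b_1,b_2,b_3)$ and $\mu_0 = \mu_0(b_1,b_2,b_3)$, so no essentially new ideas are needed beyond keeping track of constants.
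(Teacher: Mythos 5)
Your proposal is correct and follows essentially the same strategy as the paper: reduce the vanishing of the trilinear integral to a Fourier-support disjointness statement (via the convolution theorem / the identity $\int g_1 g_2 g_3 = \int \hat g_1 (\hat g_2 * \hat g_3)$) and then establish that disjointness by the transversality of the truncated cones, absorbing the smoothing of the cut-offs into the choice of $M$ and $\mu_0$. The only cosmetic difference is in the geometric step: the paper phrases the transversality uniformly in terms of the distance to $V=\text{span}(b_2,b_3)$ (which automatically covers both the basis case, $\dim V=2$, and the repeated case, $\dim V=1$), whereas you split into two cases and argue with the linear map $T^{-1}$ or the projection onto $n:=b_1-(b_1\cdot b_2)b_2$; the two formulations are equivalent.
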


\begin{rmk}
\label{rmk:combis}
After having exploited the cancellation of the contributions in $\mathcal{B} \setminus \tilde{\mathcal{B}}$ in Step 3 of Proposition \ref{prop:loc1}, in the discussion in Proposition \ref{prop:loc2} we only need to consider vectors $(b_1,b_2,b_3) \in \tilde{\mathcal{B}}$.
We point out that, due to the structure of the sets $\B_j$, for $b_1\in\B_1$, $b_2\in\B_2$ and $b_3\in\B_3$ for vectors in such that $(b_1, b_2, b_3) \in \tilde{\mathcal{B}}$ only the following two cases can occur: either
\begin{itemize}
 \item $\{b_1,b_2,b_3\}\subset \R^3$ forms a basis,
 \item or it is such that two of the three vectors agree.
 \end{itemize}
 Hence, the assumptions of Proposition \ref{prop:loc2} cover all possibilities of contributions which need to be estimated.
 
 The claim above can be proved as follows: let $(b_1,b_2,b_3)\in\tilde\B$ and
 let $i,j\in\{1,2,3\}$, $i\neq j$ such that
 $b_i,b_j\in\B_i\cap\B_j=\{b_{ij},b_{ji}\}$ and let $k \in \{1,2,3\}\setminus\{i,j\}$.
 If $b_i=b_j$ we fall in the second bullet point above.
 If $b_i\neq b_j$, they are orthogonal to $e_k$ (cf.\ \eqref{eq:normals}).
 Since $b_k\in\B_k$, $b_k\cdot e_k\neq0$ (see again \eqref{eq:normals}), thus $\{b_1,b_2,b_3\}$ is a basis and the claim is proved.
\end{rmk}

\begin{figure}[t]
\begin{center}
\includegraphics{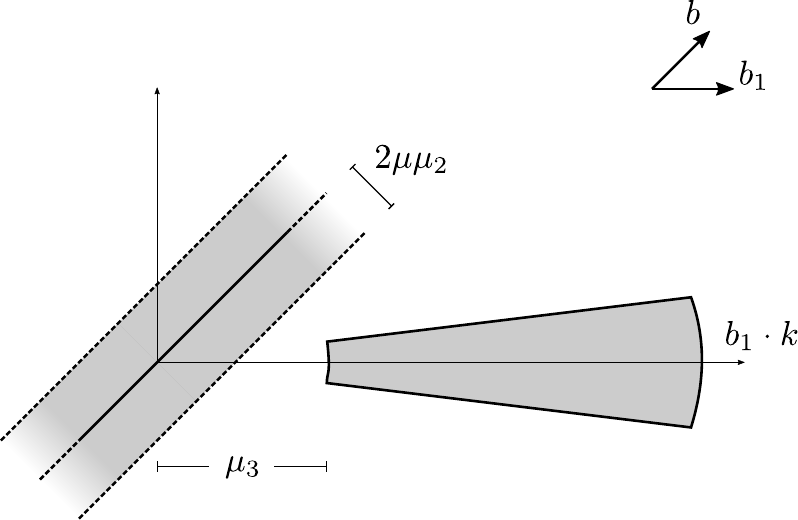}
\caption{A graphic representation of the arguments in the proof of Proposition \ref{prop:loc2}.
The vector $b$ is the projection of $b_1$ on the space $V$.}
\label{fig:cones}
\end{center}
\end{figure}

\begin{proof}[Proof of Proposition \ref{prop:loc2}]
In both the cases we can apply a bootstrap argument.
Indeed, Plancherel's theorem and Hölder's inequality yield
\begin{align}
\label{eq:case1}
\begin{split}
&\Big|\int_{\R^3} (\chi_{b_1,\mu,[\mu_3,\mu_2]}(D) \tilde\chi_{11})\left[(\chi_{b_2,\mu,[\mu_3,\mu_2]}(D)\tilde\chi_{22})(\chi_{b_3,\mu,[\mu_3,\mu_2]}(D)\tilde\chi_{33})\right] dx \Big| \\
& \quad = \Big|\int_{\R^3}\tilde\chi_{11}\chi_{b_1,\mu,[\mu_3,\mu_2]}(D)\left[(\chi_{b_2,\mu,[\mu_3,\mu_2]}(D)\tilde\chi_{22})(\chi_{b_3,\mu,[\mu_3,\mu_2]}(D)\tilde\chi_{33}) \right]dx \Big| \\
& \quad \lesssim\|\chi_{b_1,\mu,[\mu_3,\mu_2]}(D)(\chi_{b_2,\mu,[\mu_3,\mu_2]}(D)\tilde\chi_{22})(\chi_{b_3,\mu,[\mu_3,\mu_2]}(D)\tilde\chi_{33})\|_{L^2} \|\tilde{\chi}_{11}\|_{L^2}\\
& \quad \lesssim \Big|\int_{C_{b_1,\mu,\mu_2}\setminus C_{b_1,\mu,\mu_3}}\big|(\chi_{C_{b_2,\mu,[\mu_3,\mu_2]}}(k)\mathcal{F}\tilde\chi_{22})*(\chi_{C_{b_3,\mu,[\mu_3,\mu_2]}}(k)\mathcal{F}\tilde\chi_{33})\big|^2d k\Big|^\frac{1}{2}.
\end{split}
\end{align}
Let $V:=\text{span}(b_2,b_3)$.
By definition, $C_{b_2,\mu,\mu_2},C_{b_3,\mu,\mu_2}\subset\{k\in\R^3 : \dist(k,V)\le\mu\mu_2\}$, thus
\begin{equation}\label{eq:cones-mink1}
C_{b_2,\mu,\mu_2}+C_{b_3,\mu,\mu_2}\subset\{k\in\R^3 : \dist(k,V)\le2\mu\mu_2\},
\end{equation}
in the sense of Minkowski sum.
Now since $b_1$ is independent of the linear space $V$, we have that
\begin{equation}\label{eq:cones-mink2}
C_{b_1,\mu,[\mu_3,\mu_2]}\subset\{k\in\R^3 : \dist(k,V)>2\mu\mu_2\}
\end{equation}
(see Figure \ref{fig:cones}) for every $\mu<\mu_0$ and $\mu_3:=M\mu\mu_2$ where $\mu_0=\mu_0(b_1,b_2,b_3)\in(0,1)$ and $M:=M(b_1,b_2,b_3)>4$.
Gathering \eqref{eq:cones-mink1} and \eqref{eq:cones-mink2} we obtain
$$
(C_{b_2,\mu,\mu_2}+C_{b_3,\mu,\mu_2})\cap C_{b_1,\mu,[\mu_3,\mu_2]}=\emptyset.
$$
Thus, the right hand side in \eqref{eq:case1} above vanishes.
 In concluding, we stress that the arguments above work independently of whether $V$ is a one or two-dimensional vector space. 
\end{proof}

\subsection{Proof of the lower bound estimates from Theorems \ref{thm:main_zero} and \ref{thm:main_general}}

With the previous results in hand, we begin by presenting the proof of the lower bound estimates in our main lower bound results, Theorems \ref{thm:main_zero} and \ref{thm:main_general}(i).

\begin{proof}[Proof of the lower bounds from Theorems \ref{thm:main_zero} and \ref{thm:main_general}(i)]
By Propositions \ref{prop:loc1} and \ref{prop:loc2},
combining \eqref{eq:cone-reduction1}, \eqref{eq:low_freq_loc}, Remark \ref{rmk:combis} and invoking the triangle inequality, we infer that
\begin{align}
\label{eq:lower_bd1}
\Big|\int_{\R^3}\tilde{\chi}_{11}\tilde{\chi}_{22}\tilde{\chi}_{33}dx \Big|^2  \lesssim (\mu\mu_3)^2+\mu^{-2}E_{el}(\chi)+\mu_2^{-1}(E_{surf}(\chi) + \Per(\Omega)),
\end{align}
for every $\mu\in(0,\mu_0)$, $\mu_2>0$ and $\mu_3=M\mu\mu_2$, where $\mu_0=\mu_0(\B)\in(0,1)$ and $M=M(\B)>4$.
We further note that by the properties of the cubic-to-tetragonal phase transformation it holds that
\begin{align*}
\Big|\int_{\R^3}\tilde{\chi}_{11}\tilde{\chi}_{22}\tilde{\chi}_{33}dx \Big|^2 \geq \min_{e\in K}\min_{j\in\{1,2,3\}}|e_{jj}-F_{jj}|^6 |\Omega|^2.
\end{align*}
We note that for $F\in \text{intconv}\{e^{(1)},e^{(2)}, e^{(3)}\}$, by the structure of the wells, we have that $\min_{e\in K}\min_{j\in\{1,2,3\}}|e_{jj}-F_{jj}|\gtrsim \dist(F, K^{(1)})>0$.

Therefore,

\begin{align}
\label{eq:lower_bd1}
\begin{split}
1 
&\lesssim (\mu\mu_3)^{2}+\mu^{-2}E_{el}(\chi)+\mu_2^{-1}(E_{surf}(\chi) + \Per(\Omega))\\
&\lesssim (\mu^2\mu_2)^2+\left(\mu^{-2}+\epsilon^{-1}\mu_2^{-1} \right) E_{\epsilon}(\chi) + \mu_2^{-1} \Per(\Omega).
\end{split}
\end{align}
In order to conclude the desired result, it remains to carry out an optimization argument: Choosing $\mu_2 \sim \mu^{-2}$ we can absorb the term $(\mu^2\mu_2)^2$ in the left-hand side, by possibly reducing the value of the constant $\mu_0$ if needed. 
Thus, 
$$
1 \lesssim (\mu_2 +(\mu_2\epsilon)^{-1})E_{\epsilon}(\chi) + \mu_2^{-1} \Per(\Omega).
$$
Optimizing and choosing $\mu_2\sim(\epsilon\mu_2)^{-1}$, we obtain $\mu_2\sim\epsilon^{-\frac{1}{2}}$ and therefore
$$
1\lesssim \epsilon^{-\frac{1}{2}}E_\epsilon(\chi) + \epsilon^{\frac{1}{2}} \Per(\Omega).
$$

Finally, choosing $\epsilon_0>0$ sufficiently small (depending only on $\Omega$ and $\dist(F,K^{(1)})$) then allows to absorb the perimeter contribution into the left hand side and hence yields the claim. 
\end{proof}

\begin{rmk}
\label{rmk:trilin}
We remark that, in general, for $F \in \text{conv}(K)\setminus \text{intconv}(K)$ the bound 
\begin{align*}
\min_{e\in K}\min\limits_{j \in \{1,2,3\}}|e_{jj}-F_{jj}|\gtrsim \dist(F, K)>0
\end{align*}
fails. Indeed, this is a component-wise bound, and for a simple laminate one of the components $\chi_{jj}$ of $\chi$ will essentially be equal to $F_{jj}$. Thus, the above argument no longer holds in this case and a different, less tight lower scaling bound will be deduced (corresponding to the different expected lower scaling behaviour for first order laminates).
\end{rmk}

We finally, conclude by presenting the proof of the lower bound in Theorem \ref{thm:main_general}(ii):

\begin{proof}[Proof of the lower bound in Theorem \ref{thm:main_general}(ii)]
The argument is direct and already follows from the first order localization argument from Proposition \ref{prop:loc1} together with the low frequency bound. Indeed, by Proposition \ref{prop:loc1} and the low frequency bounds we have
\begin{align*}
\sum\limits_{j =1}^3\|\tilde{\chi}_{jj}\|_{L^2}^2 
&\leq \sum\limits_{j =1}^3\|\tilde{\chi}_{jj}- \chi_{j,\mu,\mu_2}(D) \tilde{\chi}_{jj}\|_{L^2}^2 + \sum\limits_{j =1}^3\sum\limits_{b \in \mathcal{B}_j}\int\limits_{\{|k|^2-|b\cdot k|^2\leq (\mu\mu_2)^2\}}|\F\tilde{\chi}_{jj}|^2 dk\\
&\leq C(\mu\mu_2)^2+C(\mu^{-2} + \epsilon^{-1} \mu_2^{-1})E_{\epsilon}(\chi) + \mu_2^{-1} \Per(\Omega).
\end{align*}
Absorbing the term $(\mu\mu_2)^2$ in the left-hand side (yielding $\mu_2\sim\mu^{-1}$) and  optimizing in $\mu$ yields that $\mu\sim\epsilon^{\frac{1}{3}}$. Thus, by the triangle inequality,
\begin{align}
\label{eq:simple_lam}
|\Omega|\sum\limits_{j=1}^{3}\min_{e\in K}|e_{jj}-F_{jj}|^2
\leq \sum\limits_{j =1}^3 \|\tilde{ \chi}_{jj} \|_{L^2}^2
\leq C \Big( \epsilon^{-\frac{2}{3}}E_{\epsilon}(\chi) + \epsilon^\frac{1}{3} \Per(\Omega)\Big).
\end{align}
We note that for first order laminates
\begin{align*}
\min_{e\in K}\max\limits_{j\in\{1,2,3\}}|e_{jj}-F_{jj}|^2>0.
\end{align*}
Thus, multiplying \eqref{eq:simple_lam} by $\epsilon^{\frac{2}{3}}$ and choosing $\epsilon \in (0,\epsilon_0)$ for some $\epsilon_0>0$ small (depending on $\Per(\Omega)$ and $\dist(F,K)$ only) then allows us to absorb the perimeter contribution and to conclude the desired estimate.
\end{proof}

\begin{rmk}[Self-accommodation vs compatibility]
Our argument from above distinguishes the two relevant -- and competing -- physical phenomena of compatibility versus self-accommodation in a clear manner. Indeed, the localization bounds from Lemma \ref{lem:loc0} are clearly effects of compatibility (and result from the (anisotropic) coercivity properties of the energy). The more refined properties from Propositions \ref{prop:loc1} and \ref{prop:loc2} now already involve a combination of compatibility and self-accommodation in the sense that the upper bounds are purely effects of (in-)compatibility, while the relevance of the trilinear expression (i.e., its non-triviality) is a consequence of self-accommodation (or, in our case, the presence of Dirichlet data). More generally, the trilinear expression is only relevant as a lower bound in the case that one considers second order laminates as boundary conditions (see Remark \ref{rmk:trilin}). It is thus only in bounding the left hand expression in
\eqref{eq:lower_bd1} that the phenomenon of self-accommodation enters. 
\end{rmk}

\section{Upper-Bound Constructions: Proof of Theorems \ref{thm:upper}  and \ref{thm:upper2}}
\label{sec:upper}

In this section we present a construction involving two orders of lamination, attaining the energy scaling bound $\epsilon^\frac{1}{2}$ and thus proving the claimed upper bound from Theorem \ref{thm:upper}. Moreover, we discuss the simple laminate construction from Theorem \ref{thm:upper2}.

As outlined in the introduction, due to technical difficulties (explained below), the construction for Theorem \ref{thm:upper} does not attain Dirichlet boundary conditions on the whole boundary of the domain being instead constant in one direction. In spite of our construction thus not fully matching the lower bound setting from Theorems \ref{thm:main_zero} and \ref{thm:main_general}, the main features of a branched double laminate are present. We expect that the same scaling behaviour emerges when attaining a full Dirichlet boundary condition.

\smallskip

As the main part of this section, we present the construction for Theorem \ref{thm:upper} in Sections \ref{sec:features}-\ref{sec:final}. In Section \ref{sec:Thm4} we outline the proof of Theorem \ref{thm:upper2} for which we invoke the ideas from \cite[Proposition 6.3]{RT21}.

\subsection{Features and difficulties of the construction for Theorem \ref{thm:upper}} 
\label{sec:features}
Before turning to the precise technical aspects of the upper bound construction for Theorem \ref{thm:upper}, let us discuss the main features of it: As a main characteristic the construction discussed below presents \emph{two} levels of branched lamination (i.e., branching within branching) occurring at two different length scales.

The coarsest lamination arises between two ``auxiliary states'' (i.e., mixtures of two martensite states) -- the matrices $e^{(A)}, e^{(B)}$ from the introduction (see also Sections \ref{sec:grads} and \ref{sec:first}).
At this level, for the setting of the full Dirichlet data the main difficulty is that of defining a self-similar construction refining towards the boundary of a three-dimensional domain.
This problem has been overcome by the (third-order laminates) construction given in \cite[Section 6.2]{RT21}.
Nonetheless, using the construction from \cite[Section 6.2]{RT21} for the first lamination level in our present case gives rise to new technical difficulties when defining the second order ones.
These additional difficulties (which were not present in \cite[Section 6.2]{RT21}) are due to the fact that on the second lamination level (in which we split \emph{both} $e^{(A)}$ and $e^{(B)}$ into finer scale branched laminates) we have to compound two different finer branched laminates  ``at the same time'' (see Section \ref{sec:second}).

In order to avoid these difficulties, we will work with mixed Dirichlet and periodic data in what follows.
In this setting we will thus first define an essentially two-dimensional first order branched lamination construction (at the expense of not obtaining Dirichlet data on the full boundary). In a second step, we will then replace the two auxiliary states by a finer (second order) branched lamination between these affine stress-free states.

\subsection{Choice of the gradients}
\label{sec:grads}

As outlined in the introduction, one possible choice for attaining the zero boundary condition is by defining the two auxiliary states
$$
e^{(A)}:=\frac{1}{3}e^{(1)}+\frac{2}{3}e^{(2)}, \quad e^{(B)}:=\frac{1}{3}e^{(1)}+\frac{2}{3}e^{(3)}.
$$
The austenite matrix then is a second order laminate involving only wells in $\{e^{(1)},e^{(2)},e^{(3)}\}$:
$$
\textbf{0}=\frac{1}{2}e^{(A)}+\frac{1}{2}e^{(B)}.
$$

We choose gradients for the strains $e^{(j)}$ and for the auxiliary strains $e^{(A)}$ and $e^{(B)}$ such that the lamination between $e^{(A)}$ and $e^{(B)}$ occurs in direction $b_{32}$, and the finer lamination between $e^{(1)}$ and $e^{(2)}$ (respectively, $e^{(1)}$ and $e^{(3)}$) happens in direction $b_{21}$ (respectively, $b_{13}$).
This is the same setting as the one chosen in the construction from \cite[Section 6]{KKO13}. As a consequence, we will also consider the choice of gradients therein, i.e., we set
$$
A_1:=\begin{pmatrix}-2&2&0\\-2&1&1\\0&-1&1\end{pmatrix},
\quad
A_2:=\begin{pmatrix}1&-1&0\\1&-2&1\\0&-1&1\end{pmatrix},
\quad
A:=\begin{pmatrix}0&0&0\\0&-1&1\\0&-1&1\end{pmatrix},
$$
and
$$
B_1:=\begin{pmatrix}-2&0&2\\0&1&-1\\-2&1&1\end{pmatrix},
\quad
B_3:=\begin{pmatrix}1&0&-1\\0&1&-1\\1&1&-2\end{pmatrix},
\quad
B:=\begin{pmatrix}0&0&0\\0&1&-1\\0&1&-1\end{pmatrix}.
$$
With this choice, it can be easily checked that $e(A_j)=e^{(j)}$, $e(A)=e^{(A)}$, $e(B_j)=e^{(j)}$, $e(B)=e^{(B)}$, that
$$
A_1-A_2=6b_{12}\otimes b_{21},
\quad
A=\frac{1}{3}A_1+\frac{2}{3}A_2,
$$
$$
B_1-B_3= -6 b_{31} \otimes b_{13},
\quad
B=\frac{1}{3}B_1+\frac{2}{3}B_3,
$$
and that
$$
A-B=4b_{23}\otimes b_{32}.
$$

Motivated by these rank-one conditions, we recall the basis from \eqref{eq:basis} which is given by $\mathcal{R}:=\{n,b_{32},d\}$ and where $n,d\in\mathbb{S}^2$ are defined as in \eqref{eq:basis}.
We highlight once more that $d$ is orthogonal to all the chosen directions of lamination, i.e., to $b_{21},b_{13},b_{32}\in \text{span}(d)^\perp$. In what follows, we will use the $n, b_{32}, d$ components as coordinates for our construction, i.e., for notational simplicity, for every $x\in\R^3$ we will use the shorthand notation 
\begin{align}
\label{eq:coordinates}
z_1:=x\cdot n,
\quad
z_2:=x\cdot b_{32}
\quad\mbox{and}\quad
z_3:=x\cdot d.
\end{align}

\subsection{First level of lamination}
\label{sec:first}

We define an auxiliary first order branched laminate $\uaux:\R^3\to\R^3$ which is constant along the lines parallel to $d$, thus essentially two-dimensional.

Let $r>0$ be a sufficiently small parameter to be determined such that $\frac{1}{r}$ is an integer and let $\theta\in(\frac{1}{4},\frac{1}{2})$ be a fixed geometric constant.
We define a domain subdivision which corresponds to a branched laminate which is refining in direction $n$, oscillating in direction $b_{32}$ and constant in direction $d$.
Similar constructions are well-known in the literature (see, e.g., \cite{KM1,CC15,RT21}).
We repeat the argument for the convenience of the reader.

\smallskip

\textbf{Domain subdivision:}
for every $j\ge 0$ we define the parameters
\begin{equation}\label{eq:parameters}
L_j:=\frac{r}{2^j}, \quad Y_j:=\frac{1-\theta^j}{2}, \quad H_j:=\frac{\theta^j(1-\theta)}{2}.
\end{equation}
Here $j$ keeps track of the generation of the cell of the self-similar construction.
We define the reference cell of each generation, the others will be obtained via translation in direction $b_{32}$.

We recall the coordinate convention from \eqref{eq:coordinates}. Furthermore, for the sake of clarity of exposition, we only work in the half-space $\{z_1 \ge 0\}$, but all the following arguments can be reworked in $\{z_1<0\}$ in an analogous way. With these preliminary remarks, we define our domain decomposition of the unit cell of our construction at level $j\geq 0$ as follows:
\begin{align*}\label{eq:ref-cell}
\Omega^{(j)}_1 &:= \Big\{x\in\Omega : Y_j\le z_1\le Y_{j+1},\, 0\le z_2\le \frac{L_j}{4}\Big\}, \\
\Omega^{(j)}_2 &:= \Big\{x\in\Omega : Y_j\le z_1\le Y_{j+1},\, \frac{L_j}{4}\le z_2 \le \frac{L_j}{4}+\frac{L_j(z_1-Y_j)}{4H_j}\Big\}, \\
\Omega^{(j)}_3 &:= \Big\{x\in\Omega : Y_j \le z_1 \le Y_{j+1},\, \frac{L_j}{4}+\frac{L_j(z_1-Y_j)}{4H_j} \le z_2 \le \frac{L_j}{2}+\frac{L_j(z_1-Y_j)}{4H_j}\Big\} ,\\
\Omega^{(j)}_4 &:= \Big\{x\in\Omega : Y_j \le z_1 \le Y_{j+1},\, \frac{L_j}{2}+\frac{L_j(z_1-Y_j)}{4H_j} \le z_2 \le L_j\Big\},
\end{align*}
see Figure \ref{fig:cell1}.
\begin{figure}
\begin{center}
\includegraphics{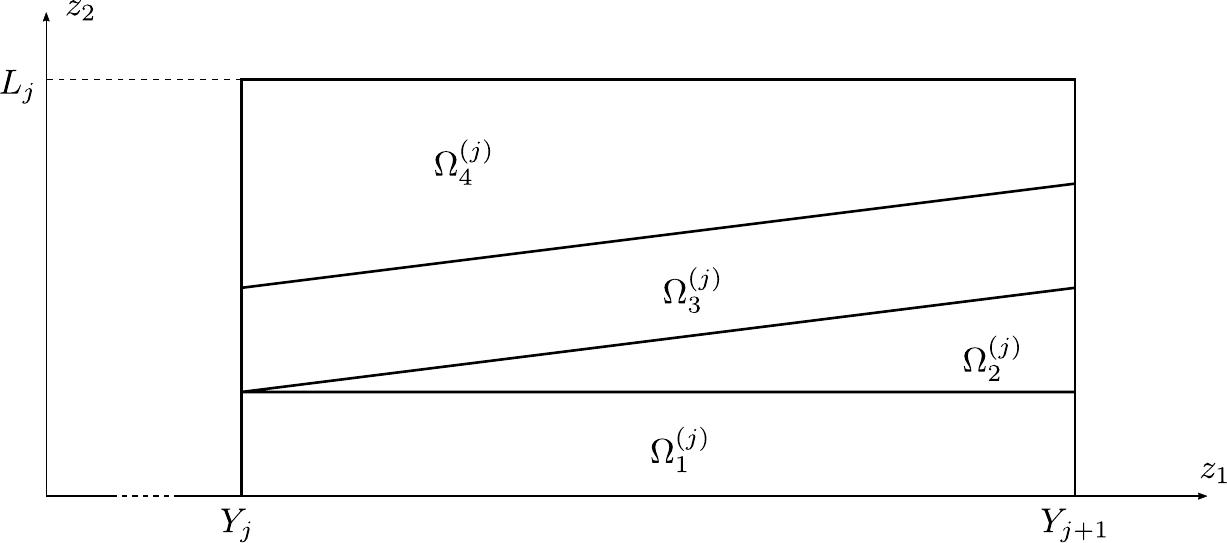}
\caption{The slice orthogonal to $d$ of the sets $\Omega_i^{(j)}$.}
\label{fig:cell1}
\end{center}
\end{figure}
Moreover, in our construction, we will stop the self-similar refinement as soon as $L_j\ge H_j$. As a consequence, the last branching generation is indexed by $j_0$ which is the largest integer such that $L_{j_0}<H_{j_0}$.
We highlight that by choosing $r<\frac{1-\theta}{2}$ for any $\theta<\frac{1}{2}$, the index $j_0$ is well defined and strictly positive.

We also define
\begin{align*}
\Omega^{(j_0+1)}_1 &:= \Big\{x\in\Omega : Y_{j_0+1}\le z_1\le \frac{1}{2},\, 0\le z_2\le \frac{L_{j_0+1}}{2}\Big\}, \\
\Omega^{(j_0+1)}_2 &:= \Big\{x\in\Omega : Y_{j_0+1}\le z_1\le \frac{1}{2},\, \frac{L_{j_0+1}}{2}\le z_2\le L_{j_0+1}\Big\},
\end{align*}
which is a covering of the $z_1$-neighbourhood of $\p_0\Omega$. Here we will exploit a cut-off argument to match the zero boundary condition.
We also write $\Omega^{(j)}:=\Omega^{(j)}_1\cup \Omega^{(j)}_2\cup \Omega^{(j)}_3\cup \Omega^{(j)}_4$ for every $j\in\{0,\dots,j_0\}$ and $\Omega^{(j_0+1)}:=\Omega^{(j_0+1)}_1\cup \Omega^{(j_0+1)}_2$.

In order to cover the whole domain, for each generation $j$ we have $\frac{2^j}{r}$ many identical copies of $\Omega^{(j)}$.
More precisely, we have
\begin{equation}\label{eq:covering}
\{x\in\Omega : z_1 \ge 0\}=\text{int}\Big(\bigcup_{j=0}^{j_0+1} \bigcup_{k=-\frac{2^{j-1}}{r}}^{\frac{2^{j-1}}{r}-1}\big(\Omega^{(j)}+kL_j b_{32}\big)\Big).
\end{equation}

\smallskip

Given the covering above, we can infer the following first order branching construction (analogous to \cite[Lemma 3.1]{RT21}).

\begin{lem}\label{lem:1tree}
Let $A,B\in\mathbb{R}^{3\times 3}$ be as above.
Then there exist $\uaux\in C^{0,1}(\R^3;\R^3)$ and $\chiaux\in BV_{\rm loc}(\R^3;\{e^{(A)},e^{(B)}\})$ complying with
\begin{equation}\label{eq:bc-aux}
\p_d\uaux \equiv 0 \text{ a.e.\ in $\R^3$},
\quad
\uaux(x)=0 \text{ if } x\in\p_0\Omega,
\end{equation}
such that
\begin{equation}\label{eq:reg-aux}
\nabla\uaux\in BV_{\rm loc}(\R^3;\R^{3\times 3}),
\quad
\|\nabla\uaux\|_{L^\infty(\R^3)}\lesssim 1,
\end{equation}
and that
\begin{equation}\label{eq:energy-aux}
\int_\Omega |e(\nabla\uaux(x))-\chiaux(x)|^2 dx+\epsilon|D\chiaux|(\Omega)\lesssim r^2+\epsilon\frac{1}{r}.
\end{equation}
\end{lem}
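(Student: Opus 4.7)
The plan is to follow the by-now-classical Kohn--Müller branching scheme (cf.\ \cite{KM1,CC15,RT21}), adapted to our specific cell decomposition and to the rank-one connection $A-B=4\,b_{23}\otimes b_{32}$ between the two auxiliary affine displacements. Since every chosen lamination normal lies in $\text{span}(d)^\perp$, I will define both $\chi_{\rm aux}$ and $u_{\rm aux}$ to depend only on $(z_1,z_2)$; the required condition $\partial_d u_{\rm aux}\equiv 0$ is then automatic, and continuity of the construction has to be checked only in the $(z_1,z_2)$-plane.

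First I would define $\chi_{\rm aux}$ on the reference cell $\Omega^{(j)}$ by assigning $e^{(A)}$ in $\Omega_1^{(j)}\cup\Omega_3^{(j)}$ and $e^{(B)}$ in $\Omega_4^{(j)}$ (so that the $b_{32}$-period is $L_j$ at the bottom $z_1=Y_j$ and $L_j/2$ at the top $z_1=Y_{j+1}$, as required by the branching), leaving $\Omega_2^{(j)}$ as a transition strip. On the pure-phase cells I set $\nabla u_{\rm aux}=A$, respectively $\nabla u_{\rm aux}=B$, and I adjust the translation constants cell by cell using the rank-one identity $A-B=4\,b_{23}\otimes b_{32}$ so that $u_{\rm aux}$ is continuous across every $b_{32}$-interface. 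In the transition trapezoid $\Omega_2^{(j)}$, whose inclined side has slope $L_j/(4H_j)$ in the $(z_1,z_2)$-plane, I set $\nabla u_{\rm aux}$ equal to the affine matrix obtained by shearing $A$ (or $B$) by the appropriate rank-one term in the direction of the slanted interface; this is the standard branching interpolation and produces a gradient jump only in direction $b_{32}$ on all vertical faces and a gradient of the form $A+\mathcal O(L_j/H_j)$ inside $\Omega_2^{(j)}$. Copies translated by $kL_jb_{32}$ according to \eqref{eq:covering} assemble into a globally continuous, piecewise-affine $u_{\rm aux}$ with piecewise-constant, and hence $BV_{\rm loc}$, gradient. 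Finally, in the cut-off strip $\Omega^{(j_0+1)}$ adjacent to $\{z_1=\pm\frac12\}$, where by choice $L_{j_0+1}\gtrsim H_{j_0+1}$, I interpolate linearly in $z_1$ between the trace of $u_{\rm aux}$ at $z_1=Y_{j_0+1}$ (which is of size $\lesssim L_{j_0+1}$ since the two affine maps agree on average) and zero on $\partial_0\Omega$; this is the step that pays for enforcing the Dirichlet condition on $\partial_0\Omega$ while respecting $\partial_d u_{\rm aux}\equiv 0$.

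For the energies, the pure-phase cells contribute nothing to $E_{el}$. In each transition cell $\Omega_2^{(j)}$ the shear has size $L_j/H_j$ and the area of the trapezoid is $\lesssim L_jH_j$, giving an elastic contribution $\lesssim L_j^3/H_j$ per cell; since there are $\sim 1/L_j$ copies at generation $j$, the total at generation $j$ is $\lesssim L_j^2/H_j\sim r^2(4\theta)^{-j}$. Because $\theta>\tfrac14$, summing over $j=0,\dots,j_0$ yields a convergent geometric series $\lesssim r^2$, and the cut-off layer contributes an equivalent $O(r^2)$ via the same scaling. For the surface energy, each generation-$j$ cell has interfaces of total length $\lesssim H_j$ in the $(z_1,z_2)$-plane (hence area $\lesssim H_j$ after integrating trivially in $d$ against $1$, since $\Omega$ has unit length in $d$), and there are $\sim 1/L_j$ of them, so the per-generation count is $\lesssim(2\theta)^j/r$; since $\theta<\tfrac12$, the geometric sum is $\lesssim 1/r$. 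Combining yields \eqref{eq:energy-aux}.

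The part I expect to be most delicate is the bookkeeping that makes $u_{\rm aux}$ genuinely continuous across all cell boundaries: the horizontal interfaces between generations $j$ and $j+1$ (where the $b_{32}$-period halves and new pure-phase regions must nucleate out of the slanted boundary of $\Omega_2^{(j)}$) and the interfaces between $\Omega^{(j_0)}$ and the cut-off layer $\Omega^{(j_0+1)}$. Both amount to checking that the affine pieces on either side agree on the common face, which in our setting is guaranteed by the rank-one structure $A-B\parallel b_{23}\otimes b_{32}$ and by the fact that the slanted interface in $\Omega_2^{(j)}$ has been calibrated so that the branching-compatible rank-one jump across it uses exactly this same direction. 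Once this matching is verified, the energy estimates reduce to the geometric sums above and the Lipschitz bound $\|\nabla u_{\rm aux}\|_{L^\infty}\lesssim 1$ follows from the uniform boundedness of $A$, $B$ and of the shear factors $L_j/H_j\lesssim 1$ (using $L_{j_0}\lesssim H_{j_0}$ at the finest generation).
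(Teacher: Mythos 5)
Your plan is the standard Kohn--M\"uller branching scheme, which is exactly what the paper does, and your energy accounting is correct (elastic $\lesssim r^2$ from a geometric sum with ratio $(4\theta)^{-1}<1$, surface $\lesssim r^{-1}$ from a geometric sum with ratio $2\theta<1$, cut-off layer controlled because $L_{j_0+1}\gtrsim H_{j_0+1}$). However, your cell-by-cell gradient assignment is not rank-one compatible, so the step you yourself flag as the delicate one would fail as stated.

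Recall the geometry of the reference cell: $\Omega_1^{(j)}$ is a rectangle; $\Omega_2^{(j)}$ is a thin wedge with a vertical left face $\{z_2=L_j/4\}$ and a slanted right face; $\Omega_3^{(j)}$ is a parallelogram whose \emph{both} long sides are slanted with slope $L_j/(4H_j)$; and $\Omega_4^{(j)}$ has one slanted and one vertical long side. You propose pure gradients $A$ on $\Omega_1^{(j)}\cup\Omega_3^{(j)}$ and $B$ on $\Omega_4^{(j)}$, with a sheared gradient confined to $\Omega_2^{(j)}$, and you claim a ``gradient jump only in direction $b_{32}$ on all vertical faces''. But the interface between $\Omega_3^{(j)}$ and $\Omega_4^{(j)}$ is \emph{slanted}, and under your assignment the jump across it is $A-B=4\,b_{23}\otimes b_{32}$, which is rank-one with respect to $b_{32}$ but \emph{not} with respect to the slanted normal $\propto b_{32}-\tfrac{L_j}{4H_j}n$. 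So $u_{\rm aux}$ as you define it is discontinuous across that face; the Hadamard jump condition fails. One also checks that no single shear on $\Omega_2^{(j)}$ can simultaneously reconcile its vertical face (against pure $A$ on $\Omega_1^{(j)}$) and its slanted face (against pure $A$ on $\Omega_3^{(j)}$). The correct bookkeeping---and what the paper does---is to assign the pure gradient $B$ (phase $e^{(B)}$) to $\Omega_2^{(j)}$, and the sheared gradient $A^{(j)}:=A-\tfrac{L_j}{H_j}\,b_{23}\otimes n$ (phase $e^{(A)}$) to $\Omega_3^{(j)}$. Then $A^{(j)}-B=b_{23}\otimes\bigl(4b_{32}-\tfrac{L_j}{H_j}n\bigr)$ is rank-one precisely with the slanted normal, so both slanted interfaces of $\Omega_3^{(j)}$ are compatible, while the vertical interfaces carry $A-B$, rank-one with $b_{32}$. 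With the shear moved to $\Omega_3^{(j)}$ your elastic and surface estimates go through verbatim (all the elastic energy then sits in $\Omega_3^{(j)}$), and the rest of your outline---translation constants, halving of the $b_{32}$-period at $z_1=Y_{j+1}$, the cut-off layer $\Omega^{(j_0+1)}$---matches the paper's proof.
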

\begin{proof}
We divide the proof into four steps.

\smallskip

\emph{Step 1: Reference cell construction.}
For every $j\in\{0,\dots,j_0\}$ we define $u^{(j)}:\Omega^{(j)}\to\R^3$ as
$$
u^{(j)}(x):=
\begin{cases}
Ax & x\in \Omega^{(j)}_1, \\
Bx+L_j b_{23} 
& x\in \Omega^{(j)}_2, \\
A^{(j)} x+\frac{L_j}{H_j}Y_j b_{23} & x\in\Omega^{(j)}_3, \\
Bx + 2L_j b_{23} 
& x\in\Omega^{(j)}_4,
\end{cases}
$$
with $A^{(j)} := A -\frac{L_j}{H_j} b_{23} \otimes n$.
By construction $u^{(j)}$ is Lipschitz continuous, $\nabla u^{(j)}\in\{A,B,A^{(j)}\}$ is $BV$ and $\|\nabla u^{(j)}\|_{L^\infty(\Omega^{(j)})} \lesssim 1$.
Since $A d=B d=A^{(j)} d=0$, we also have $\p_d u^{(j)}\equiv0$ a.e..
Moreover, it holds that
\begin{equation}\label{eq:bc-refcell1}
u^{(j)}(x)=0,
\quad \text{if }x\in\Omega^{(j)} \mbox{ such that } z_2\in\{0,L_j\},
\end{equation}
and
\begin{equation}\label{eq:bc-refcell2}
u^{(j)}(x) = f_j(z_2) b_{23},
\quad \text{if } x\in \Omega^{(j)} \mbox{ such that } z_1=Y_j,
\end{equation}
\begin{equation}\label{eq:bc-refcell3}
u^{(j)}(x) =
\begin{cases}
\frac{1}{2}f_j(2z_2) b_{23} & 0\le z_2\le\frac{L_j}{2}, \\
\frac{1}{2}f_j(2z_2-L_j) b_{23} & \frac{L_j}{2}\le z_2\le L_j,
\end{cases}
\quad x\in \Omega^{(j)} \mbox{ and such that } z_1=Y_{j+1}.
\end{equation}
Here we have defined $f_j(t)=\min\{2t,2L_j-2t\}$.

We eventually define $u^{(j_0+1)}:\Omega^{(j_0+1)}\to\R^3$ as
$$
u^{(j_0+1)}(x)=\frac{1-2z_1}{1-2Y_{j_0+1}} f_{j_0+1}(z_2)b_{23}, 
$$
which is a cut-off to attain zero boundary conditions on the whole lateral boundary $\p_0\Omega$.
The function $u^{(j_0+1)}$ complies with \eqref{eq:bc-refcell1}, \eqref{eq:bc-refcell2} and
\begin{equation}\label{eq:bc-refcell-cutoff}
u^{(j_0+1)}(x)=0 \quad \text{if } x\in\Omega^{(j_0+1)} \mbox{ is such that } z_1=\frac{1}{2}.
\end{equation}
Moreover, \eqref{eq:parameters} and the definition of $j_0$ yield $\|\nabla u^{(j_0+1)}\|_{L^\infty(\Omega^{(j_0+1)})}\lesssim 1$, and it satisfies $\p_d u^{(j_0+1)}\equiv0$ almost everywhere. 

\smallskip

\emph{Step 2: Compatibility conditions.}
We show that the reference constructions defined in Step 1 can be concatenated continuously.

Indeed, by \eqref{eq:bc-refcell1} for every $j\in\{0,\dots,j_0+1\}$ each $u^{(j)}$ can be extended periodically (in the $z_2$-direction) to a continuous function (not relabelled) $u^{(j)}:\{x\in\Omega: Y_j\le z_1\le Y_{j+1}\}\to\R^3$. 
Moreover, for every $j\in\{0,\dots,j_0\}$, from \eqref{eq:parameters}, \eqref{eq:bc-refcell2} and \eqref{eq:bc-refcell3} we obtain that
$$
u^{(j)}(x)=u^{(j+1)}(x),
\quad \text{if } x\in\Omega \mbox{ with } z_1=Y_{j+1}.
$$

\smallskip

\emph{Step 3: Full construction.}
With the previous preparation we are now in a position to define $\uaux:\Omega\to\R^3$ by simply setting
$$
\uaux(x):= u^{(j)}(x), \quad \text{if } x\in\Omega \mbox{ with } Y_j\le z_1\le Y_{j+1} , \ j \in \{0,\dots,j_0+1\}.
$$
On $\Omega\cap\{z_1<0\}$ we define it by reflection.
By construction the properties in \eqref{eq:bc-aux} and \eqref{eq:reg-aux} are satisfied.
Accordingly, we define the phase indicator as
$$
\chiaux(x)=
\begin{cases}
e^{(A)} & \text{on } \Omega^{(j)}_i+kL_j b_{32} \text{ for } i=\{1,3\},\, j\in\{0,\dots,j_0+1\},\, k\in\{-\frac{2^{j-1}}{r},\dots,\frac{2^{j-1}}{r}-1\},  \\
e^{(B)} & \text{on } \Omega^{(j)}_i+kL_j b_{32} \text{ for } i=\{2,4\},\, j\in\{0,\dots,j_0+1\},\, k\in\{-\frac{2^{j-1}}{r},\dots,\frac{2^{j-1}}{r}-1\}. \\
\end{cases}
$$
As above, on $\Omega\cap\{z_1<0\}$, we define it by reflection.
By \eqref{eq:bc-aux} $u_{aux}$ is extended to be constant in the $d$-direction and to vanish outside $[-\frac{1}{2},\frac{1}{2}]n\times[-\frac{1}{2},\frac{1}{2}]b_{32}\times\R d$.

\emph{Step 4: Energy contribution.}
By \eqref{eq:parameters}, \eqref{eq:covering} and by construction, the surface energy is controlled by
\begin{equation}\label{eq:surf-en-ord1}
|D\chiaux|(\Omega) \lesssim \sum_{j=0}^{j_0+1} \frac{2^j}{r}\mathcal{H}^2(\p\Omega_1^{(j)}) \lesssim \sum_{j\ge0} \frac{2^j}{r}H_j \lesssim \sum_{j\ge0} \frac{(2\theta)^j}{r}\lesssim \frac{1}{r}.
\end{equation}
The only nonzero elastic energy contributions (apart from the cut-off region) are on $\Omega_3^{(j)}+kL_j b_{32}$, which are identical to the contribution on the reference cell $\Omega_3^{(j)}$.
In there
$$
\int_{\Omega_3^{(j)}}|e(\nabla\uaux(x))-e^{(A)}|^2dx = \int_{\Omega_3^{(j)}}|e(A^{(j)})-e^{(A)}|^2dx \sim \frac{L_j^2}{H_j^2}L_j H_j \sim \frac{r^3}{(8\theta)^j}.
$$
In the cut-off region, by the boundedness of the gradient and the definition of $j_0$ we have
$$
\int_{\Omega^{(j_0+1)}} |e(\nabla\uaux(x))-\chiaux(x)|^2 dx \lesssim L_{j_0+1}H_{j_0+1} \lesssim L_{j_0+1}H_{j_0+1}\Big(\frac{L_{j_0+1}}{H_{j_0+1}}\Big)^2 \lesssim \frac{r^3}{(8\theta)^{j_0+1}}.
$$
Thus, summing over $j$, since we have $\frac{2^j}{r}$ identical copies of $\Omega^{(j)}$ and since $4\theta>1$, we infer
\begin{equation}\label{eq:el-en-ord1}
\int_{\Omega}|e(\nabla\uaux(x))-\chiaux(x)|^2dx \lesssim \sum_{j=0}^{j_0+1}\frac{2^j}{r}\int_{\Omega^{(j)}}\Big|e(A^{(j)})-e(A) \Big|^2dx \lesssim \sum_{j\ge0}\frac{r^2}{(4\theta)^j} \lesssim r^2.
\end{equation}
Collecting \eqref{eq:surf-en-ord1} and \eqref{eq:el-en-ord1} yields the claimed result of the lemma.
\end{proof}

\begin{rmk}\label{rmk:shear}
We highlight that $\Omega_3^{(j)}=\phi_j(\Omega_1^{(j)})$ where $\phi_j$ is the (shifted) shear 
$$
\phi_j(x)=M_j x+\frac{L_j}{4}\Big(1-\frac{Y_j}{H_j}\Big) b_{32}
$$
and $M_j=Id+\frac{L_j}{4H_j} b_{32}\otimes n$.
Notice also that $A^{(j)}=(A-B) M_j^{-1}+B$. 
For later use (c.f. the definition of the displacement in Section \ref{sec:A3}), we observe that the displacement $u^{(j)}|_{\Omega_3^{(j)}}$ can be obtained from the displacement $u^{(j)}|_{\Omega_1^{(j)}}$ by using these components: Indeed, first ``isolating the oscillating part'', i.e., writing $\tilde u^{(j)}(x):=u^{(j)}(x)-Bx$ for every $x\in\Omega_1^{(j)}$, we set
$$
\tilde u^{(j)}|_{\Omega_3^{(j)}}(x'):= \tilde u^{(j)}|_{\Omega_1^{(j)}}(\phi_j^{-1}(x')), \quad x'\in\Omega_3^{(j)}.
$$
With this definition, a short computation then implies that
\begin{equation}
\label{eq:change-var}
u^{(j)}|_{\Omega_3^{(j)}}(x') := \tilde u^{(j)}|_{\Omega_3^{(j)}}(x')+(Bx'+L_j b_{23}).
\end{equation}
We note that $\uaux$ is constructed from $u^{(j)}$ in the way just outlined. Hence, if now a function $v^{(j)}:\Omega^{(j)} \rightarrow \R^3$ is obtained through an analogous shear procedure and if $v^{(j)}(x) = \uaux(x)$ for $x\in \partial_0 \Omega^{(j)}_1$, then by construction $v^{(j)}$ also coincides with $\uaux(x)$ for $x\in\p_0\Omega_3^{(j)}$. We will make use of this in Section \ref{sec:A3} below.
\end{rmk}

\subsection{Second level of branching}
\label{sec:second}

We now turn to the second order of lamination.

In the subdomains $\Omega^{(j)}_1$ and $\Omega^{(j)}_3$ we define a branched laminate with gradients $A_1$ and $A_2$, attaining the affine boundary condition $\uaux$.
The construction inside $\Omega^{(j)}_3$ will be obtained from the one in $\Omega_1^{(j)}$ with a shear argument.

In the subdomains $\Omega^{(j)}_2$ we define a branched laminate with gradients $B_1$ and $B_3$, attaining the affine boundary condition $\uaux$.
This will require another self-similar refinement due to the triangular shape of the sections (orthogonal to $d$) of $\Omega^{(j)}_2$.
Exploiting the  two constructions in $\Omega^{(j)}_1$, $\Omega_2^{(j)}$, we subsequently obtain the branched laminate inside $\Omega^{(j)}_4$ easily. We split the discussion of these constructions into several subsections and first deal with the $A_1, A_2$ lamination in the domains $\Omega^{(j)}_1$ in Section \ref{sec:2-ord-lam-1}, in the domains $\Omega^{(j)}_3$ in Section \ref{sec:A3} and the lamination between $B_1, B_2$ in $\Omega_j^{(2)}$ in Section \ref{sec:B2}. In Section \ref{sec:final} we then combine all previous estimates into the desired upper scaling bound.

\subsubsection{Lamination of $A_1$ and $A_2$ inside $\Omega_1^{(j)}$}
\label{sec:2-ord-lam-1}

Let $r_2>0$ be a parameter to be fixed later (in the proof of Theorem \ref{thm:upper} below) with ratio $\frac{r_2}{r}$ being sufficiently small.
We define a branched laminate, refining in $b_{32}$, oscillating in $b_{21}$ and being constant in $d$ direction. The non-orthogonality of the second order of lamination with respect to the first order will provide new difficulties in this construction compared to the constructions from \cite{RT21}.

In what follows, we consider the following domain subdivision:
We cut $\Omega_1^{(j)}$ into slices orthogonal to $b_{21}$ of amplitude $\frac{r_2}{2^j}$, starting from the bottom-left corner and ending on the top-right one, assuming for simplicity that $\frac{2^jH_j}{r_2}$ is integer.
This assumption is not restrictive, as we may always realize this by changing the value of $r_2$ for each $j$ suitably. 
For further insights on the choice of the length scale $\frac{r_2}{2^j}$ we point the interested reader to the discussion in \cite[Remark 5.1]{RT21}.

Due to the non-orthogonality of $b_{21}$ and $n$, this subdivision leaves the corners of $\Omega_1^{(j)}$ uncovered.
Specifically,
$$
\Omega_1^{(j)}=T_l\cup T_r\cup S,
$$
where
\begin{align*}
S & := \Big\{x\in\Omega_1^{(j)} : \frac{1}{\sqrt{3}}z_2+Y_j\le z_1\le\frac{1}{\sqrt{3}}z_2+Y_{j+1}-\frac{1}{4\sqrt{3}}L_j \Big\}, \\
T_l & := \Big\{x\in\Omega_1^{(j)} : Y_j\le z_1\le\frac{1}{\sqrt{3}}z_2+Y_j\Big\}, \\
T_r & := \Big\{x\in\Omega_1^{(j)} : \frac{1}{\sqrt{3}}z_2+Y_{j+1} - \frac{1}{4\sqrt{3}}L_j\le z_1\le Y_{j+1}\Big\},
\end{align*}
see Figure \ref{fig:oscA1}. For ease of notation, we have here dropped the index $j$ in the definition of the sets $T_l, T_r, S$.
\begin{figure}
\begin{center}
\includegraphics{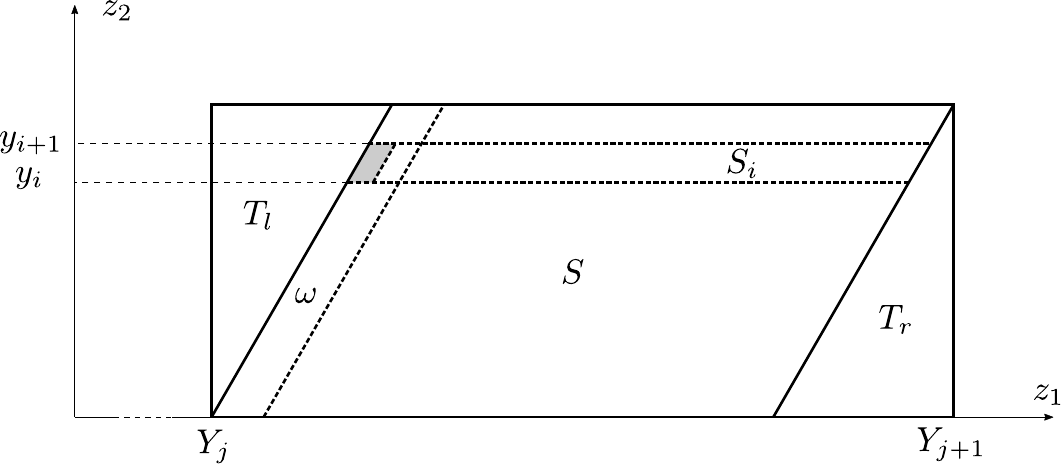}
\caption{The domain subdivision of $\Omega_1^{(j)}$ into the sets $S$, $T_l$ and $T_r$.
The regions $S_i$ and $\omega$ (cf. Steps 1 and 3 in the proof of Lemma \ref{lem:2ord1}) are also depicted.
The shaded region represents the cell $\omega^{(i)}$ which is depicted in more detail in Figure \ref{fig:oscA2}.}
\label{fig:oscA1}
\end{center}
\end{figure}
Here we have used that $b_{21}= \frac{\sqrt{3}}{2}n- \frac{1}{2}b_{32}$.
Thus, in what follows, we provide a ``standard'' covering for $S$ and treat the corners $T_l$ and $T_r$ separately (cf. Step 2 of the proof of Lemma \ref{lem:2ord1} and Figure \ref{fig:corner}).

\textbf{Second order domain subdivision:}
Let $i\in\N$ denote the generation of the self-similar refinement of this second order construction.
We define the new parameters
\begin{equation}\label{eq:parameters2}
\ell_i := \frac{r_2}{2^j2^i},
\quad
y_i := \frac{L_j}{8}(2-\theta^i),
\quad
h_i := \frac{L_j}{8}(1-\theta)\theta^i,
\end{equation}
and the associated reference cells 
\begin{align*}
\omega_1^{(i)} &:= \Big\{x\in\Omega_1^{(j)} : y_i\le z_2\le y_{i+1},\, \frac{1}{\sqrt{3}}z_2\le z_1-Y_j\le\frac{1}{\sqrt{3}}z_2+\frac{\ell_i}{6}\Big\}, \\
\omega_2^{(i)} &:= \Big\{x\in\Omega_1^{(j)} : y_i\le z_2\le y_{i+1},\, \frac{1}{\sqrt{3}}z_2+\frac{\ell_i}{6}\le z_1-Y_j\le \sigma_i(z_2-y_i)+\frac{y_i}{\sqrt{3}}+\frac{\ell_i}{6}\Big\}, \\
\omega_3^{(i)} &:= \Big\{x\in\Omega_1^{(j)} : y_i\le z_2\le y_{i+1},\, \sigma_i(z_2-y_i)+\frac{\ell_i}{6}+\frac{y_i}{\sqrt{3}}\le z_1-Y_j\le\sigma_i(z_2-y_i)+\frac{\ell_i}{3}+\frac{y_i}{\sqrt{3}}\Big\}, \\
\omega_4^{(i)} &:= \Big\{x\in\Omega_1^{(j)} : y_i\le z_2\le y_{i+1},\, \sigma_i(z_2-y_i)+\frac{\ell_i}{3}+\frac{y_i}{\sqrt{3}}\le z_1-Y_j\le\frac{1}{\sqrt{3}}z_2+\ell_i\Big\},
\end{align*}
where $\sigma_i:=(\frac{\ell_i}{3 h_i}+\frac{1}{\sqrt{3}})$, see Figure \ref{fig:oscA2}.
Again the subdivision stops at $i_0$, the largest index such that $\ell_{i_0}\leq h_{i_0}$.
We define the boundary layer cells as
\begin{align*}
\omega_1^{(i_0+1)} &:= \Big\{x\in\Omega_1^{(j)} : y_{i_0+1}\le z_2\le\frac{L_j}{4},\, \frac{1}{\sqrt{3}}z_2\le z_1-Y_j\le \frac{1}{\sqrt{3}}z_2+\frac{\ell_{i_0+1}}{3}\Big\}, \\
\omega_2^{(i_0+1)} &:= \Big\{x\in\Omega_1^{(j)} : y_{i_0+1}\le z_2\le\frac{L_j}{4},\, \frac{1}{\sqrt{3}}z_2+\frac{\ell_{i_0+1}}{3}\le z_1-Y_j\le\frac{1}{\sqrt{3}}z_2+\ell_{i_0+1}\Big\}.
\end{align*}
\begin{figure}
\begin{center}
\includegraphics{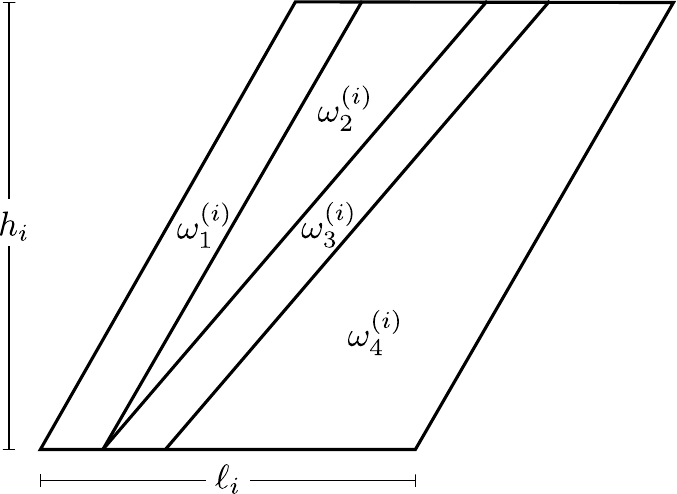}
\caption{The domain subdivision of $\omega^{(i)}$.}
\label{fig:oscA2}
\end{center}
\end{figure}\\
We also write $\omega^{(i)}=\omega^{(i)}_1\cup\omega^{(i)}_2\cup\omega^{(i)}_3\cup\omega^{(i)}_4$.
For each generation $i$ we have $\frac{2^{i+j} H_j}{r_2}$ many identical copies of $\omega^{(i)}$, i.e.,
\begin{equation}\label{eq:covering2}
\Big\{x\in S : z_2 \ge \frac{L_j}{8}\Big\}=\bigcup_{i=0}^{i_0+1} \bigcup_{k=0}^{\frac{2^{i+j}H_j}{r_2}-1}\big(\omega^{(i)}+k\ell_i n\big).
\end{equation}
Having fixed this notation, we deduce the following upper bound construction in the cells $\Omega^{(j)}_1$.
For the sake of clarity of exposition, in the sequel we will use the notation
$$
E_\epsilon(v,\psi;\Omega'):=\int_{\Omega'}|v(x)-\psi(x)|^2dx+\epsilon|D\psi|(\text{int}(F)),
$$
for any $\Omega'\subset\Omega$ is a Lipschitz set, $v\in H^1(\text{int}(\Omega');\R^3)$, $\psi\in BV(\text{int}(\Omega');K)$.

\begin{lem}\label{lem:2ord1}
Let $A_1$ and $A_2$ be as above.
Then there exist $u^{(j,1)}\in C^{0,1}(\Omega^{(j)}_1;\R^3)$ and $\chi^{(j,1)}\in BV({\rm int(}\Omega^{(j)}_1;\{e^{(1)},e^{(2)}\})$ complying with
\begin{equation}\label{eq:bc-2ord1}
\p_d u^{(j,1)}\equiv 0 \text{ a.e. in $\Omega_1^{(j)}$},
\quad
u^{(j,1)}(x)=\uaux(x) \text{ if } x\in\p_0\Omega^{(j)}_1,
\end{equation}
such that
\begin{equation}\label{eq:reg-2ord1}
\nabla u^{(j,1)}\in BV({\rm int(}\Omega^{(j)}_1;\R^{3\times 3}),
\quad
\|\nabla u^{(j,1)}\|_{L^\infty(\Omega^{(j)}_1)}\lesssim 1,
\end{equation}
and satisfying
\begin{equation}\label{eq:2ord1-en}
E_\epsilon(u^{(j,1)},\chi^{(j,1)};\Omega^{(j)}_1) \lesssim \Big(\frac{\theta}{2}\Big)^j\frac{r_2^2}{r}+\frac{r_2 r}{4^j}+\epsilon \theta^j\frac{r}{r_2}.
\end{equation}
\end{lem}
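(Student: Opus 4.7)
The plan is to mimic the branched laminate construction of Lemma~\ref{lem:1tree}, now at the finer oscillation scale $\ell_i = r_2/2^{j+i}$ and refinement scale $h_i \sim L_j\theta^i$, with lamination direction $b_{21}$ replacing $b_{32}$. First, on each reference cell $\omega^{(i)}\subset S$ I would define $u^{(j,1)}$ to be Lipschitz continuous and piecewise affine with gradients in $\{A_1,A_2,A^{(i)}\}$, where the transition gradient $A^{(i)}$ is chosen precisely so that the four pieces glue continuously across the interfaces separating $\omega^{(i)}_1,\dots,\omega^{(i)}_4$, in direct analogy to the matrix $A^{(j)}=A-\frac{L_j}{H_j}b_{23}\otimes n$ appearing in Step~1 of Lemma~\ref{lem:1tree}. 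The phase indicator $\chi^{(j,1)}$ is set to $e^{(1)}$ on the subregions carrying gradient $A_1$ and to $e^{(2)}$ elsewhere, so that the elastic error $|e(\nabla u^{(j,1)})-\chi^{(j,1)}|^2$ is supported only on the transition parallelograms $\omega^{(i)}_2$, the cut-off cell $\omega^{(i_0+1)}$, and the corners $T_l,T_r$. Compatibility between consecutive generations is enforced by $\ell_{i+1}=\ell_i/2$ exactly as in Step~2 of Lemma~\ref{lem:1tree}. The construction is then extended to $\{z_2\le L_j/8\}$ by reflection (which also produces the affine trace $Ax$ on the $z_2=0$ and $z_2=L_j/4$ portions of $\p_0\Omega_1^{(j)}$ after the cut-off $\omega^{(i_0+1)}$), and tiled periodically in the $n$-direction in line with \eqref{eq:covering2}; since all three candidate gradients annihilate $d$, the condition $\p_d u^{(j,1)}\equiv 0$ is automatic.

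Second, in the triangular corners $T_l,T_r$ -- uncovered by the tilted cells because $b_{21}$ is not orthogonal to $n$ -- I would define a separate cut-off matching the slanted branched trace on the diagonal side and the affine datum $Ax$ on the outer faces $z_1=Y_j,\,Y_{j+1}$. A workable choice is a single-period laminate of $A_1$ and $A_2$ at the coarsest scale $\ell_0=r_2/2^j$, joined to the affine $Ax$ through a sheared transition layer of width $\sim \ell_0$ located near the outer face; this places the elastic error on a region of volume $\sim \ell_0 L_j = r_2 r/4^j$ with bounded gradient deviation, yielding a corner contribution of this order.

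Finally, the energy estimate follows by summing geometric series in $i$. The surface contribution is $\sum_{i=0}^{i_0+1}\tfrac{2^{i+j}H_j}{r_2}h_i\lesssim \tfrac{2^j H_j L_j}{r_2}\sum_i(2\theta)^i\lesssim \theta^j r/r_2$ (using $2\theta<1$ since $\theta<1/2$), giving the $\epsilon\theta^j r/r_2$ term. The elastic energy from the transition regions sums as $\sum_i\tfrac{2^{i+j}H_j}{r_2}\cdot\tfrac{\ell_i^3}{h_i}=\tfrac{H_j r_2^2}{2^{2j}L_j}\sum_i(4\theta)^{-i}\lesssim (\theta/2)^j r_2^2/r$ (using $4\theta>1$ since $\theta>1/4$), giving the first term; the corners and cut-off contribute the middle term $r_2 r/4^j$ as just discussed. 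The main obstacle is the corner construction in $T_l,T_r$: one must simultaneously match the slanted branched laminate on the diagonal side without destroying $\p_d u^{(j,1)}\equiv 0$, match $Ax$ on the outer side, and keep both surface and elastic contributions within the $r_2 r/4^j$ budget (note that $\theta<1/2$ rules out the crude option of spanning the corner with $\sim L_j/\ell_0$ interfaces of length $L_j$, which would contribute $\epsilon r^2/(2^j r_2)\gg\epsilon\theta^j r/r_2$). This slanted-corner bookkeeping, absent in the orthogonal constructions of \cite{RT21}, is the principal technical novelty of the lemma.
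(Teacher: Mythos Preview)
Your construction in $S$ and the resulting energy bookkeeping are essentially correct and match the paper. The corners, however, contain a concrete error: you claim that the ``crude option'' of filling $T_l$ with an unbranched laminate of period $\ell_0$ has surface energy $\epsilon r^2/(2^jr_2)\gg\epsilon\theta^j r/r_2$, but this inequality reads $r/2^j\gg\theta^j$, i.e.\ $L_j\gg H_j$, which is \emph{false} for every $j\le j_0$ by the very definition of $j_0$. In fact $\epsilon r^2/(2^jr_2)\lesssim\epsilon\theta^j r/r_2$, so the surface energy is not the obstruction you identify, and your ``single-period laminate plus sheared transition layer'' proposal---which is anyway too vague (one period cannot fill a triangle of width $\sim L_j$, and you do not address matching $Ax$ on the $z_2=L_j/4$ face)---is not needed to avoid it.

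The paper's actual corner construction is different from anything you describe. It slices $T_l$ into parallelogram-shaped strips $\omega_m$ parallel to the hypotenuse, of geometrically shrinking width $\rho_m=\ell_0\bigl(1-\tfrac{4\sqrt{3}r_2}{2^jL_j}\bigr)^m$ and height $w_m\sim L_j\bigl(1-\tfrac{4\sqrt{3}r_2}{2^jL_j}\bigr)^{m+1}$, and places in each $\omega_m$ a full \emph{branched} laminate (a rescaled copy of the tree in $\omega\subset S$) attaining $Ax$ on all of $\p_0\omega_m$; this matches both $v$ on the hypotenuse and $Ax$ on the legs. The small triangular leftovers are filled with $\uaux=Ax$; their total area $\lesssim\sum_m\rho_m^2+(r_2/2^j)^2\lesssim r_2L_j/2^j=r_2r/4^j$ supplies the middle term of \eqref{eq:2ord1-en}, while the surface energy $\epsilon\sum_m w_m\lesssim\epsilon 2^jL_j^2/r_2\lesssim\epsilon\theta^j r/r_2$ is absorbed into the last term.
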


Apart from the non-orthogonality (which gives rise to the new domains $T_l, T_r$), the construction is analogous as the one presented in the proof of Lemma \ref{lem:1tree}. Hence, we just highlight the main differences in the proof.

\begin{proof}
We divide the proof into four steps.

\emph{Step 1: Second order branching construction inside $S$.}
For every $i\in\{0,\dots,i_0\}$ we define $v^{(i)}\in W^{1,\infty}(\omega^{(i)};\R^3)$ as
$$
v^{(i)}(x) =
\begin{cases}
A_1(x-Y_jn) & x\in\omega_1^{(i)}, \\
A_2(x-Y_jn)+\frac{\sqrt{3}}{2}\ell_i b_{12} & x\in\omega_2^{(i)}, \\
A_1^{(i)}(x-Y_jn)+\sqrt{3}\frac{\ell_i}{h_i}y_ib_{12} & x\in\omega_3^{(i)}, \\
A_2(x-Y_jn)+\sqrt{3}\ell_ib_{12} & x\in\omega_4^{(i)},
\end{cases}
$$
where $A_1^{(i)}=A_1-\sqrt{3}\frac{\ell_i}{h_i}b_{12}\otimes b_{32}$.
For $i=i_0+1$ we use a cut-off argument (to attain $Ax$) analogous to that of the proof of Lemma \ref{lem:1tree}, denoting the corresponding displacement as $v^{(i_0+1)}$.
With a slight abuse of notation, we define $v^{(i)}$ on the whole stripe
$$
S_i:=\{x\in S : y_i\le z_2\le y_{i+1}\}
$$
(cf. Figure \ref{fig:oscA1}) with an affine extension; namely, $v^{(i)}(x)=w^{(i)}(x)+Ax$ where $w^{(i)}$ is the $\omega^{(i)}$-periodic extension of $v^{(i)}(x)-Ax$ on $S_i$.
Correspondingly we define
$$
\psi^{(i)}(x)=
\begin{cases}
e^{(1)} & x\in\omega_1^{(i)}\cup\omega_3^{(i)}, \\
e^{(2)} & x\in\omega_2^{(i)}\cup\omega_4^{(i)},
\end{cases}
$$
which we extend $\omega^{(i)}$-periodically on $S_i$.
Arguing analogously as in Step 1 and 2 of the proof of Lemma \ref{lem:1tree} (cf. \eqref{eq:bc-refcell2} and \eqref{eq:bc-refcell3}), by construction we have that $v^{(i)}(x)=v^{(i+1)}(x)$ for $x\in S$ with $z_2=y_{i+1}$.
Thus, 
$$
v(x):=v^{(i)}(x)
\quad \text{if } x\in S_i,
$$
defines a Lipschitz function on $S$.
We also define $\psi(x):=\psi^{(i)}(x)$ as $x\in S_i$.
Hence, $v\in C^{0,1}(S;\R^3)$ with affine boundary data $\uaux(x)=A x$ and $\psi\in BV(S;\{e^{(1)},e^{(2)}\})$.

\emph{Step 2: Filling the corners $T_l$ and $T_r$.}
We cut $T_l$ into slices orthogonal to $b_{21}$.
For $m\ge 0$, we iteratively define the slices
\begin{align*}
\omega_m &:= \Big\{x\in T_l : \sqrt{3}\sum_{m'=0}^{m}\rho_{m'}\le z_2\le\frac{L_j}{4},\, \frac{1}{\sqrt{3}}z_2-\rho_m\le z_1-Y_j+\sum_{m'=0}^{m-1}\rho_{m'}\le\frac{1}{\sqrt{3}}z_2\Big\},
\end{align*}
where we set
$$
\rho_m:=\frac{r_2}{2^j}\Big(1-\frac{4\sqrt{3} r_2}{2^j L_j}\Big)^m
\quad \text{and} \quad w_m:= \frac{L_j}{4}\Big(1-\frac{4\sqrt{3}r_2}{2^j L_j}\Big)^{m+1},
$$
see Figure \ref{fig:corner}.
\begin{figure}
\begin{center}
\includegraphics{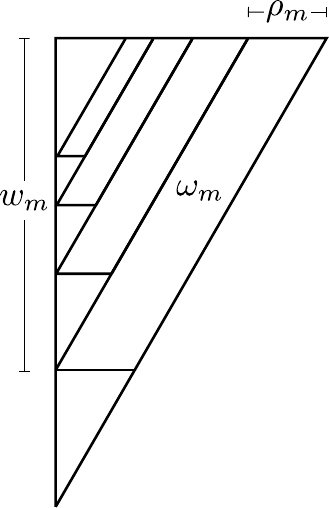}
\caption{The domain subdivision of $T_l$ into sets $\omega_m$.}
\label{fig:corner}
\end{center}
\end{figure}
We stop as $m\le m_0$, with $m_0$ being the largest index such that $\sum_{m=0}^{m_0}\rho_m<\frac{L_j}{4\sqrt{3}}-\frac{r_2}{2^j}$.
Notice that with this choice it holds
\begin{equation}\label{eq:corner-area}
\Big|T_l\setminus\Big(\bigcup_{m=0}^{m_0}\omega_{m}\Big)\Big| \lesssim \sum_{m\ge0}\rho_m^2+\Big(\frac{r_2}{2^j}\Big)^2 \lesssim \frac{r_2}{2^j}L_j+\frac{r_2^2}{4^{j}}.
\end{equation}
In each $\omega_m$ we define $v_l^{(m)}$ and $\psi_l^{(m)}$ analogously as $v^{(i)}$ and $\psi^{(i)}$ in Step 1.
Eventually, for every $x\in T_l$ we set
$$
v_l(x):=
\begin{cases}
v_l^{(m)}(x) & x\in\omega_m, \ m\in\{0,\dots,m_0\}, \\
\uaux(x) & \text{otherwise},
\end{cases}
\quad
\psi_l(x):=
\begin{cases}
\psi_l^{(m)}(x) & x\in\omega_m, \ m\in\{0,\dots,m_0\}, \\
e^{(2)} & \text{otherwise.}
\end{cases}
$$
A completely analogous argument on $T_r$ is used to define $v_r$ and $\psi_r$.

\emph{Step 3: Energy contribution of a single ``tree'' of branching in $S$.}
We consider the reference cell
$$
\omega := \Big\{x\in\Omega : 0\le z_2\le\frac{L_j}{4},\,
\frac{1}{\sqrt{3}}z_2\le z_1-Y_j\le\frac{r_2}{2^j}+\frac{1}{\sqrt{3}}z_2\Big\}\subset S,
$$
which is the first slice of $S$ (Figure \ref{fig:oscA1}).
The restriction of $v$ and $\psi$ on $\omega$ corresponds to a single ``tree'' of our branching construction.

Again the energy contributions originates only from $\omega_3^{(i)}$ and it holds
$$
\int_{\omega_3^{(i)}}|e(A_1^{(i)})-e^{(1)}|^2 dx \lesssim \Big(\frac{\ell_{i}}{h_{i}}\Big)^2\ell_{i}h_{i} \lesssim \frac{r_2^3}{L_j(8\theta)^i8^j}.
$$
In $\omega$ we have $2^i$ of these contributions for each generation $i$. Thus,
$$
\int_\omega |e(\nabla v)-\psi|^2 dx \lesssim \sum_{i\ge 0} \frac{r_2^3}{L_j(4\theta)^{i}8^j} \lesssim \frac{r_2^3}{8^jL_j},
$$
where we also include the cut-off contributions working as in the proof of Lemma \ref{lem:1tree}.
The surface energy contributions originating from the second order branched laminates in a cell of generation $i$ within $\omega$ in turn simply corresponds to $\sim 2^{i}h_{i}$. Hence,
$$
|D\psi|({\rm int(}\omega) \lesssim \sum_{i\ge0} L_j (2\theta)^{i} \lesssim L_j.
$$
The total energy contribution on $\omega$ is
\begin{equation}\label{eq:en-1tree-2ord}
E_\epsilon(v,\psi;\omega) \lesssim \frac{r_2^3}{8^jL_j} + \epsilon L_j.
\end{equation}

\emph{Step 4: Total energy contribution inside $\Omega_1^{(j)}$.}
We finally define the construction on the whole $\Omega_1^{(j)}$ as follows
$$
u^{(j,1)}(x):=
\begin{cases}
v(x) & x\in S, \\
v_l(x) & x\in T_l, \\
v_r(x) & x\in T_r,
\end{cases}
\quad\text{and}\quad
\chi^{(j,1)}(x):=
\begin{cases}
\psi(x) & x\in S, \\
\psi_l(x) & x\in T_l, \\
\psi_r(x) & x\in T_r.
\end{cases}
$$
The contribution in $\Omega_1^{(j)}$ splits as
$$
E_\epsilon(u^{(j,1)},\chi^{(j,1)};\Omega_1^{(j)}) \le E_\epsilon(v,\psi;S)+E_\epsilon(v_l,\psi_l;T_l)+E_\epsilon(v_r,\psi_r;T_r).
$$
Since there are $\frac{2^jH_j}{r_2}$ many identical copies of $\omega$ inside $S$ we infer from \eqref{eq:en-1tree-2ord} that
\begin{equation}\label{eq:en-S}
E_\epsilon(v,\psi;S) \lesssim \frac{2^jH_j}{r_2}E_\epsilon(v,\psi;\omega)\lesssim \frac{\theta^j r_2^2}{2^jr}+\epsilon\frac{\theta^j r}{r_2}\lesssim \Big(\frac{\theta}{2}\Big)^j\frac{r_2^2}{r}+\epsilon \theta^j\frac{r}{r_2}.
\end{equation}
By construction of the domains $\omega_m$ (as in Step 2) 
the definitions of $(v,\psi)$ and $(v_l^{(m)},\psi_l^{(m)})$ are completely analogous.
Hence, the energy in $\omega_m$ can be obtained by replacing $\frac{r_2}{2^j}$ and $\frac{L_j}{4}$ with $\rho_m$ and $w_m $ in \eqref{eq:en-1tree-2ord}, respectively.
Thus, from \eqref{eq:en-1tree-2ord} and \eqref{eq:corner-area}, by summing over $m$ we get
\begin{equation}\label{eq:en-Tl}
E_\epsilon(v_l,\psi_l;T_l) \lesssim \Big(\sum_{m\ge0} \Big(\frac{\rho_m^3}{w_m}+\epsilon w_m \Big) \Big)+\frac{r_2}{2^j}L_j+\frac{r_2^2}{4^{j}} \lesssim \frac{r_2^2}{4^j}+\epsilon2^j\frac{L_j^2}{r_2}+\frac{r_2}{2^j}L_j.
\end{equation}
Recalling that for $j\le j_0$ we have that $L_j\lesssim\theta^j$, the surface energy term above can be controlled as $\epsilon 2^j\frac{L_j^2}{r_2}\lesssim\epsilon\theta^j\frac{ 2^jL_j}{r_2}=\epsilon\theta^j\frac{r}{r_2}$.
Summing \eqref{eq:en-S} and \eqref{eq:en-Tl}, therefore yields
$$
E_\epsilon(u^{(j,1)},\chi^{(j,1)};\Omega_1^{(j)}) \lesssim \Big(\frac{\theta}{2}\Big)^j\frac{r_2^2}{r}+\frac{r_2^2}{4^j}+\frac{r_2 r}{4^j}+\epsilon \theta^j\frac{r}{r_2},
$$
which results in the desired estimate.
\end{proof}

\subsubsection{Lamination of $A_1$ and $A_2$ inside $\Omega^{(j)}_3$}
\label{sec:A3}

In order to define the construction in this domain, we use the shear argument introduced in Remark \ref{rmk:shear}.
Let $u^{(j,1)}$ be given by Lemma \ref{lem:2ord1}, then we define
$$
u^{(j,3)}(x) := u^{(j,1)}(\phi_j^{-1}(x))+B(x-\phi_j^{-1}(x))+L_j b_{23}
\quad \text{and} \quad
\chi^{(j,3)} := \chi^{(j,1)}(\phi_j^{-1}(x)).
$$
Thus, $u^{(j,3)}\in C^{0,1}(\Omega^{(j)}_3;\R^3)$ with
$$
\nabla u^{(j,3)}\in BV(\rm int(\Omega^{(j)}_3;\R^{3\times 3}),
\quad
\|\nabla u^{(j,3)}\|_{L^\infty(\Omega^{(j)}_3}\lesssim 1,
$$
and by Remark \ref{rmk:shear} (since this holds in $\partial_0 \Omega_1^{(j)}$)
$$
u^{(j,3)}(x)=\uaux(x)
\text{ if } x\in\p_0\Omega^{(j)}_3.
$$
Moreover, letting $y=\phi_j^{-1}(x)$ and noticing that
$$
\nabla u^{(j,3)}(x)=\nabla u^{(j,1)}(y)M_j^{-1}+B(Id-M_j^{-1})=\nabla u^{(j,1)}(y)-\frac{L_j}{4H_j}(\nabla u^{(j,1)}(y)-B)b_{32}\otimes n
$$
by this change of variables we get
\begin{equation}\label{eq:en-second}
\begin{split}
\int_{\Omega^{(j)}_3}|e(\nabla u^{(j,3)}(x))-\chi^{(j,3)}(x)|^2 dx & \lesssim \int_{\Omega^{(j)}_1}|e(\nabla u^{(j,1)}(y))-\chi^{(j,1)}(y)|^2 dy +\Big(\frac{L_j}{H_j}\Big)^2|\Omega^{(j)}_1| \\
& \lesssim \Big(\frac{\theta}{2}\Big)^j\frac{r_2^2}{r}+\frac{r_2 r}{4^j}+\frac{r^3}{(8\theta)^j}.
\end{split}
\end{equation}

\subsubsection{Lamination of $B_1$ and $B_3$ inside $\Omega^{(j)}_2$}
\label{sec:B2}

We define a branched lamination, refining in the $b_{32}$ and oscillating in the $b_{13}$ direction and being constant in the $d$ direction.
Since $\Omega_2^{(j)}$ has triangular cross-sections, we need to refine the lamination when approaching the corner.
For this we exploit an argument analogous to that in \cite[Section 5.2]{RT23}.

\begin{lem}\label{lem:2ord-triangle}
Let $B_1$ and $B_3$ be as above.
Then there exist $u^{(j,2)}\in C^{0,1}(\Omega^{(j)}_2;\R^3)$ and $\chi^{(j,2)}\in BV(\rm int(\Omega^{(j)}_2;\{e^{(1)},e^{(3)}\})$ complying with
\begin{equation*}\label{eq:bc-2ord2}
\p_d u^{(j,2)}\equiv 0 \text{ a.e.\ in $\Omega_2^{(j)}$},
\quad
u^{(j,2)}(x)=\uaux(x) \text{ if } x\in\p_0\Omega^{(j)}_2,
\end{equation*}
such that
\begin{equation*}\label{eq:reg-2ord2}
\nabla u^{(j,2)}\in BV(\rm int(\Omega^{(j)}_2;\R^{3\times 3}),
\quad
\|\nabla u^{(j,2)}\|_{L^\infty(\Omega^{(j)}_2)}\lesssim 1,
\end{equation*}
and satisfying
\begin{equation}\label{eq:2ord2-en}
E_\epsilon(u^{(j,2)},\chi^{(j,2)};\Omega^{(j)}_2) \lesssim \Big(\frac{\theta}{2}\Big)^j\frac{r_2^2}{r}+\epsilon \theta^j\frac{r}{r_2}+\frac{r_2 r}{4^j}+\frac{r^3}{(8\theta)^j}.
\end{equation}
\end{lem}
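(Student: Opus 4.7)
The plan is to follow the template of Lemma \ref{lem:2ord1}, now laminating between $B_1$ and $B_3$ oscillating in direction $b_{13}$ (rather than $A_1, A_2$ in direction $b_{21}$), while still refining in direction $b_{32}$ and remaining constant in $d$. The rank-one condition $B_1 - B_3 = -6\, b_{31}\otimes b_{13}$ and the convex decomposition $B = \frac{1}{3}B_1 + \frac{2}{3}B_3$ play exactly the roles of their $A$-analogues; consequently the reference-cell construction, the inter-generational compatibility at the affine data $Bx + L_j b_{23}$, and the final boundary-layer cut-off attaining $\uaux$ on the flat portions of $\p_0\Omega^{(j)}_2$ can be carried out essentially verbatim. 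Since $\nabla\uaux \equiv B$ on $\p_0\Omega_2^{(j)}$, no shear step analogous to Section \ref{sec:A3} is needed at this stage for matching the outer boundary data.

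The new difficulty, absent in the parallelogram-shaped $\Omega_1^{(j)}$, is the degenerate tip of the triangular cross-section of $\Omega_2^{(j)}$ at $(z_1,z_2)=(Y_j,L_j/4)$. Following the strategy of \cite[Section 5.2]{RT23}, I would perform a self-similar dyadic decomposition of $\Omega_2^{(j)}$ into sub-triangles $T_m$ of linear size $\sim 2^{-m}(H_j, L_j)$ accumulating at this corner. In each $T_m$ one runs an appropriately rescaled copy of the $B_1, B_3$-branched construction built above, with compatibility across adjacent $T_m$ and $T_{m+1}$ guaranteed by the common affine datum $Bx + L_j b_{23}$ and with $d$-constancy preserved throughout. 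Because each $T_m$ is self-similar to $T_0$, the elastic and surface energies scale as a geometric series whose ratio, up to $\theta$, is $1/2$, and the sum is controlled by the energy of $T_0$; this gives rise to the first three terms of \eqref{eq:2ord2-en}, in direct analogy with the bounds \eqref{eq:en-S} and \eqref{eq:en-Tl}.

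The additional term $r^3/(8\theta)^j$ in \eqref{eq:2ord2-en} will arise from the innermost scale at which the dyadic refinement must be stopped (when $2^{-m}L_j$ becomes comparable to $2^{-m}H_j$ divided by the aspect ratio of the underlying $B_1, B_3$ construction). In this residual tip, one interpolates linearly to the affine boundary data on a region of volume $\sim L_j H_j$ with gradient mismatch of order $L_j/H_j$, incurring elastic cost of order $L_j^3/H_j \sim r^3/(8\theta)^j$, completely analogous to the shear-type error in \eqref{eq:en-second}. The main obstacle is the coherent gluing of the dyadically-rescaled $b_{13}$-oriented laminations across the corner scales while matching $\uaux$ on the entirety of $\p_0\Omega_2^{(j)}$: the non-orthogonality of $b_{13}$ with respect to $b_{32}$ produces the same auxiliary parallelogram corners as in Steps 2--3 of Lemma \ref{lem:2ord1}, and one must check that the slicing-plus-corner-filling technique developed there can be iterated over all dyadic scales without creating incompatible interfaces. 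Once this is verified, collecting all contributions yields the bound \eqref{eq:2ord2-en}.
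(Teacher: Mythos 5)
Your overall framework is right: reuse the Lemma \ref{lem:2ord1} template with the rank-one pair $(B_1,B_3)$ in direction $b_{13}$, observe that no analogue of the Section \ref{sec:A3} shear is needed because $\nabla\uaux\equiv B$ on $\Omega_2^{(j)}$, and handle the degenerate triangular cross-section by a self-similar refinement that accumulates at the tip. But you propose a genuinely different decomposition from the paper and leave the crucial step unverified, precisely at the point where you flag it yourself. The paper does \emph{not} iterate the entire slice-plus-corner machinery on a nested sequence of dyadic sub-triangles; instead, after splitting $\Omega_2^{(j)}=\Delta\cup T$, it slices $\Delta$ \emph{tree-by-tree} into slabs $\omega_m$ orthogonal to $b_{13}$ whose widths $\rho_m=\tfrac{r_2}{2^j}(1-\tfrac{r_2}{2^jH_j})^m$ and heights $w_m$ decay by the \emph{non-dyadic} ratio $(1-\tfrac{r_2}{2^jH_j})$, so that each $\omega_m$ carries a single branched tree of the exact Lemma \ref{lem:2ord1}--Step 1 type, and compatibility across consecutive slices is automatic because each tree attains the affine datum on both of its narrow faces. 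The residual corner $T$ is then filled by the literal Step 2 argument of Lemma \ref{lem:2ord1}. Your dyadic-triangle scheme, by contrast, would nest trapezoidal annuli each containing many trees of varying widths, so you would need to re-match tree counts across annuli interfaces; that is exactly the ``coherent gluing'' you admit is unchecked, and it is not a cosmetic issue --- it is the whole point of choosing the decay ratio $(1-\tfrac{r_2}{2^jH_j})$ rather than $\tfrac12$.

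The provenance of the term $\tfrac{r^3}{(8\theta)^j}$ is also not what you describe. It does not arise from a linear interpolation with gradient mismatch $L_j/H_j$ over a region of volume $\sim L_jH_j$ (that mechanism belongs to the shear estimate \eqref{eq:en-second} for $\Omega_3^{(j)}$). It arises because the stopping condition of the slicing leaves an uncovered region near the tip of volume $\lesssim \tfrac{L_j}{H_j}\sum_m\rho_m^2+\tfrac{L_j^3}{H_j}$, cf.\ \eqref{eq:area-refinement}, on which one simply keeps the auxiliary pair $(\uaux,\text{one well})$; the elastic density there is $O(1)$, and the small \emph{volume} $\tfrac{L_j^3}{H_j}\sim\tfrac{r^3}{(8\theta)^j}$ supplies the bound. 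The arithmetic coincidence of the exponent should not be read as the two mechanisms being the same.
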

\begin{proof}
As in the previous case, we split $\Omega_2^{(j)}=\Delta \cup T$, where
$$
\Delta := \Big\{x\in\Omega_2^{(j)} : z_1-Y_{j+1}\le-\frac{1}{\sqrt{3}}(z_2 - \frac{L_j}{4})\Big\}
$$
and $T:=\Omega_2^{(j)}\setminus \text{int(}\Delta$ (see Figure \ref{fig:oscB}).

\emph{Step 1: Covering refinement towards the corner inside $\Delta$.}
We cut $\Delta$ into slices orthogonal to $b_{13}$.
The first of such slices is of amplitude $\frac{r_2}{2^j}$, starting from the bottom-right corner of $\Omega_2^{(j)}$, namely we cut along $z_2=-\sqrt{3}(z_1-Y_{j+1})+\frac{L_j}{4}$ since $b_{13}= -\frac{\sqrt{3}}{2}n-\frac{1}{2}b_{32}$. 
We define the quantity $w:=\frac{L_j}{4}\frac{\sqrt{3}}{\sqrt{3}+\frac{L_j}{4H_j}}$ and, for $m\geq 0$, we consider
\begin{multline*}
\omega_m:= \Big\{x\in\Omega : 0\le z_2-\frac{L_j}{4}\le w_m, \\
-\frac{1}{\sqrt{3}}\Big(z_2-\frac{L_j}{4}\Big)-\rho_m\le z_1-\Big(Y_{j+1}-\sum_{m'=0}^{m-1}\rho_{m'}\Big)\le-\frac{1}{\sqrt{3}}\Big(z_2-\frac{L_j}{4}\Big)\Big\}\subset\Delta,
\end{multline*}
where
$$
\rho_m:=\frac{r_2}{2^j}\Big(1-\frac{r_2}{2^jH_j}\Big)^m
\quad \text{and} \quad
w_m:=w\Big(1-\frac{r_2}{2^jH_j}\Big)^{m+1},
$$
see Figure \ref{fig:oscB}.
\begin{figure}
\begin{center}
\includegraphics{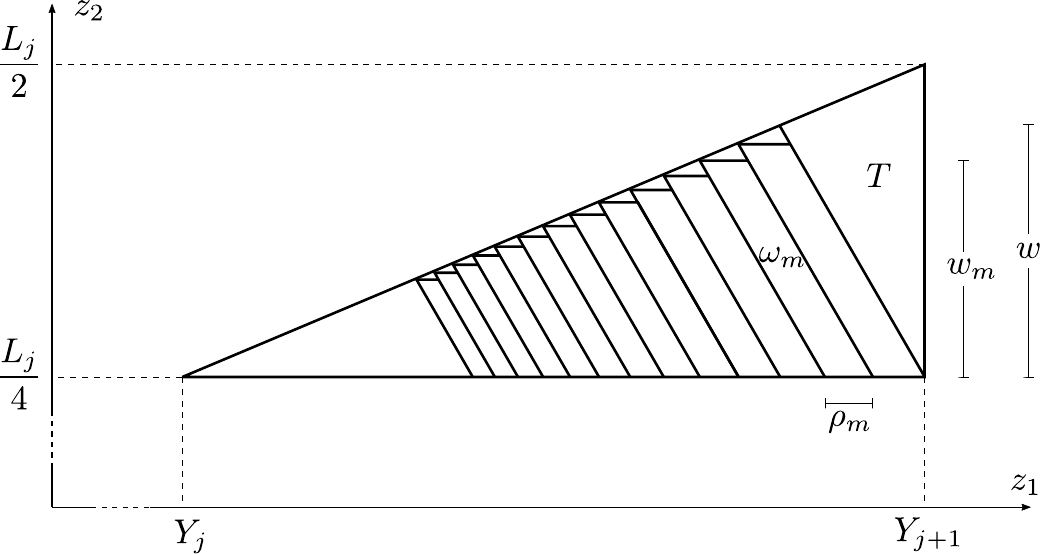}
\caption{The domain subdivision of $\Omega_2^{(j)}$ into sets $\omega_m$ and $T$.}
\label{fig:oscB}
\end{center}
\end{figure}
We stop as $m\le m_0$, with $m_0$ being the largest index such that $\sum_{m=0}^{m_0}\rho_m<H_j-\frac{L_j}{4}$.
Notice that with this choice it holds
\begin{equation}\label{eq:area-refinement}
\Big|\Delta\setminus(\bigcup_{m=0}^{m_0}\omega_m)\Big| \lesssim \frac{L_j}{H_j}\sum_{m\ge0}\rho_m^2+\frac{L_j^3}{H_j} \lesssim \frac{r_2}{2^j}L_j+\frac{L_j^3}{H_j}.
\end{equation}

\emph{Step 2: Filling the corner $T$.} 
This argument is analogous to that of Step 2 of Lemma \ref{lem:2ord1} so we will not repeat it.
We denote as $v_T$ and $\psi_T$ the corresponding constructions.
For these the analogue of \eqref{eq:en-Tl} holds, that is
\begin{equation}\label{eq:en-T}
E_\epsilon(v_T,\psi_T;T) \lesssim \frac{r_2^2}{4^j}+\epsilon\frac{L_j^2 2^{j}}{r_2}+\frac{r_2}{2^j}L_j.
\end{equation}

\emph{Step 3: Construction inside the slices.}
In an identical way as in the proof of Lemma \ref{lem:2ord1}, we define $v^{(m)}\in C^{0,1}(\omega_m;\R^3)$ and $\psi^{(m)}\in BV({\rm int}(\omega_m);\{e^{(1)},e^{(3)}\})$ with $v^{(m)}=\uaux$ on $\p_0 \omega_m$ and such that
\begin{equation}\label{eq:2ord3-est}
E_\epsilon(u^{(m)},\psi^{(m)};\omega_m) \lesssim \frac{\rho_m^3}{w_m}+\epsilon w_m.
\end{equation}
Last be not least, we then define $u^{(j,2)}$ and $\chi^{(j,2)}$ as follows: To this end, we begin by introducing two auxiliary functions
$$
v(x)=
\begin{cases}
v^{(m)}(x) & x\in\omega_m,\, m\in\{0,\dots,m_0\}, \\
\uaux(x) & x\in S\setminus(\bigcup_{m=0}^{m_0}\omega_m),
\end{cases},
\quad
\psi(x)=
\begin{cases}
\psi^{(m)}(x) & x\in\omega_m,\, m\in\{0,\dots,m_0\}, \\
\chiaux(x) & x\in S\setminus(\bigcup_{m=0}^{m_0}\omega_m).
\end{cases}
$$
By \eqref{eq:area-refinement} and \eqref{eq:2ord3-est}, summing over $m$ we get
\begin{equation}\label{eq:en-S2}
\begin{split}
E_\epsilon(v,\psi;\Delta) &\lesssim \sum_{m\ge 0}\Big(\frac{\rho_m^3}{w_m}+\epsilon w_m\Big)+\frac{r_2}{2^j}L_j+\frac{L_j^3}{H_j} \\
& \lesssim \sum_{m\ge 0} \Big(\frac{r_2^3}{8^jL_j}\Big(1-\frac{r_2}{2^jH_j}\Big)^{2m}+\epsilon L_j\Big(1-\frac{r_2}{2^jH_j}\Big)^m\Big)+\frac{r_2}{2^j}L_j+\frac{L_j^3}{H_j} \\
&\lesssim \frac{r_2^3}{4^j r}\frac{(2\theta)^j}{r_2}+\epsilon\frac{r}{2^j}\frac{(2\theta)^j}{r_2} +\frac{r_2 r}{4^j}+\frac{r^3}{(8\theta)^j} \\
& \lesssim \Big(\frac{\theta}{2}\Big)^j\frac{r_2^2}{r}+\epsilon\theta^j\frac{r}{r_2}+\frac{r_2 r}{4^j}+\frac{r^3}{(8\theta)^j}.
\end{split}
\end{equation}
By finally defining
$$
u^{(j,2)}(x)=
\begin{cases}
v(x) & x\in \Delta ,\\
v_T(x) & x\in T,
\end{cases}
\quad \text{and} \quad
\chi^{(j,2)}(x)=
\begin{cases}
\psi(x) & x\in \Delta, \\
\psi_T(x) & x\in T,
\end{cases}
$$
and summing together \eqref{eq:en-T} and \eqref{eq:en-S2}, the claim follows.
\end{proof}

\subsection{Final energy contribution}
\label{sec:final}

We have now everything in place to compute the total energy contribution on the whole domain $\Omega$.

Indeed, we note that the only missing domain is $\Omega_4^{(j)}$ for which we observe that it equals to $\Omega^{(j)}_1\cup\Omega_2^{(j)}$ (up to reflection and shifts). Hence, the lamination inside $\Omega^{(j)}_4$ can be split into laminations analogous to those in $\Omega^{(j)}_1$ and $\Omega^{(j)}_2$, and thus its energy contributions are absorbed by those inside $\Omega^{(j)}_1$ and $\Omega^{(j)}_2$.
We denote as $u^{(j,4)}$ and $\chi^{(j,4)}$ the constructions in there.

\begin{prop}
Let $A_1,A_2,B_1,B_2\in\mathbb{R}^{3\times 3}$ be as above.
Let $r,r_2>0$ such that $r<\frac{1}{2}$ and $r_2<\frac{r}{2}$.
Then there exist $u\in C^{0,1}(\R^3;\R^3)$ and $\chi\in BV_{\rm loc}(\R^3;\{e^{(1)},e^{(2)},e^{(3)}\})$ complying with
\begin{equation*}
\p_d u \equiv 0 \text{ a.e.\ in $\R^3$},
\quad
u(x)=0 \text{ if } x\in\p_0\Omega,
\end{equation*}
such that
\begin{equation*}
\nabla u\in BV_{\rm loc}(\R^3;\R^{3\times 3}),
\quad
\|\nabla u\|_{L^\infty(\R^3)}\lesssim 1,
\end{equation*}
and that
\begin{equation*}
\int_\Omega |e(\nabla u(x))-\chi(x)|^2 dx+\epsilon|D\chi|(\Omega)\lesssim r^2+\Big(\frac{r_2}{r}\Big)^2+r_2+\epsilon\frac{1}{r_2}. 
\end{equation*}
\end{prop}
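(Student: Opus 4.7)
The plan is to assemble the full construction by replacing each auxiliary state in $\uaux$ with the refined second-order laminates from Lemmas \ref{lem:2ord1} and \ref{lem:2ord-triangle} (and the sheared variant from Section \ref{sec:A3}). Using the covering \eqref{eq:covering} of $\{z_1\ge 0\}$, I would set $u(x):=u^{(j,i)}(x)$ and $\chi(x):=\chi^{(j,i)}(x)$ on $\Omega^{(j)}_i+kL_j b_{32}$ for the appropriate $j,i,k$, extend to $\{z_1<0\}$ by reflection as in Lemma \ref{lem:1tree}, and then extend constantly in the $d$-direction (so in particular $\p_d u\equiv 0$) and by zero outside $\Omega$ in the remaining directions.

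The continuity of $u$ across all the shared interfaces follows immediately from the boundary conditions $u^{(j,i)}=\uaux$ on $\p_0\Omega^{(j)}_i$ which are built into Lemmas \ref{lem:2ord1} and \ref{lem:2ord-triangle} (together with the matching property from Remark \ref{rmk:shear} for $\Omega^{(j)}_3$), combined with the continuity of $\uaux$ from Lemma \ref{lem:1tree}. The regularity $\nabla u\in BV_{\rm loc}$ and the uniform bound $\|\nabla u\|_{L^\infty}\lesssim 1$ are inherited cell by cell from \eqref{eq:reg-2ord1} and its analogues, and the boundary condition $u=0$ on $\p_0\Omega$ transfers from the cut-off already built into $u^{(j_0+1)}$ and $\uaux$.

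For the energy bound, I would sum the per-cell estimates \eqref{eq:2ord1-en}, \eqref{eq:en-second}, \eqref{eq:2ord2-en} (with the contribution of $\Omega^{(j)}_4$ absorbed by those of $\Omega^{(j)}_1$ and $\Omega^{(j)}_2$) over $j\in\{0,\dots,j_0+1\}$ with $\frac{2^j}{r}$ identical copies per generation. Each of the four types of terms converges under the assumption $\theta\in(\frac{1}{4},\frac{1}{2})$: the contribution $\frac{2^j}{r}(\theta/2)^j\frac{r_2^2}{r}=\theta^j(r_2/r)^2$ sums to $(r_2/r)^2$; the contribution $\frac{2^j}{r}\cdot\frac{r_2 r}{4^j}=\frac{r_2}{2^j}$ sums to $r_2$; the contribution $\frac{2^j}{r}\cdot\frac{r^3}{(8\theta)^j}=\frac{r^2}{(4\theta)^j}$ sums to $r^2$ thanks to $4\theta>1$; and the contribution $\frac{2^j}{r}\cdot\epsilon\theta^j\frac{r}{r_2}=\frac{(2\theta)^j\epsilon}{r_2}$ sums to $\epsilon/r_2$ thanks to $2\theta<1$. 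In addition, the jumps of $\chi$ across the interfaces separating the subdomains $\Omega^{(j)}_i$ produce a further surface-energy term dominated by $\epsilon|D\chiaux|(\Omega)\lesssim \epsilon/r \lesssim \epsilon/r_2$ (cf.\ \eqref{eq:surf-en-ord1}). Collecting these contributions yields the claimed bound.

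The main obstacle is the careful bookkeeping to ensure the two orders of lamination are compatible at the interfaces in the geometrically delicate situation where the finer lamination directions $b_{21}, b_{13}$ are only transversal (not orthogonal) to the coarse lamination direction $b_{32}$. This non-orthogonality is precisely what necessitates the corner constructions $T_l$, $T_r$, $T$ in Lemmas \ref{lem:2ord1} and \ref{lem:2ord-triangle} and what motivates the use of mixed Dirichlet-periodic data, so that the $d$-direction may be decoupled from the analysis. Once the per-cell estimates are in hand, the choice $\theta\in(\frac{1}{4},\frac{1}{2})$ is needed exactly to balance the two geometric sums with ratios $2\theta$ and $4\theta$ simultaneously, one of which must remain strictly below and the other strictly above unity.
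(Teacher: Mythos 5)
Your proposal is correct and follows essentially the same route as the paper: patch the per-cell constructions $u^{(j,i)},\chi^{(j,i)}$ over the covering \eqref{eq:covering}, use reflection in $z_1$ and constant extension in $d$, invoke the shared boundary data $u^{(j,i)}=\uaux$ on $\p_0\Omega^{(j)}_i$ (and Remark \ref{rmk:shear}) for continuity and $BV$-regularity, and sum the four geometric series arising from \eqref{eq:2ord1-en}, \eqref{eq:en-second}, \eqref{eq:2ord2-en} together with the inter-cell jump term $\epsilon|D\chiaux|(\Omega)\lesssim\epsilon/r$. The only cosmetic difference is that you spell out the four geometric sums and the role of $\theta\in(\tfrac14,\tfrac12)$ explicitly, whereas the paper records the intermediate display and states the result directly.
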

\begin{proof}
Without loss of generality we can reduce to $r$ and $r_2$ as in the previous sections.
Let $u^{(j,i)}$ be the functions defined in Section \ref{sec:second} and above.
We define 
\begin{multline*}
u(x):=u^{(j,i)}(x-kL_j b_{32}), \quad \mbox{when } x\in \Omega^{(j)}_{i}+k L_j b_{32}, \ j\in\{0,\dots,j_0+1\}, \ i\in\{1,2,3,4\}, \\
k\in\Big\{-\frac{2^{j-1}}{r},\dots,\frac{2^{j-1}}{r}-1\Big\},
\end{multline*}
and
\begin{multline*}
\chi(x):=
\chi^{(j,i)}(x-kL_j b_{32}), \quad \mbox{when } x\in \Omega^{(j)}_{i}+k L_j b_{32}, \ j\in\{0,\dots,j_0+1\}, \ i\in\{1,2,3,4\}, \\
k\in\Big\{-\frac{2^{j-1}}{r},\dots,\frac{2^{j-1}}{r}-1\Big\}.
\end{multline*}
Moreover, we extend the functions $u,\chi$ to $\R^n$, first constantly in $d$ and then to zero outside $[-\frac{1}{2},\frac{1}{2}] n \times [-\frac{1}{2},\frac{1}{2}] b_{32} \times \R d$. 
Gathering \eqref{eq:energy-aux}, \eqref{eq:2ord1-en}, \eqref{eq:en-second} and \eqref{eq:2ord2-en}, since for every generation $j$ we have $\frac{2^j}{r}$ identical copies of $\Omega_i^{(j)}$, by summing over $j$ we obtain
\begin{align*}
E_\epsilon(u,\chi,\Omega) & \lesssim r^2+\epsilon\frac{1}{r}+\sum_{j\ge0}\frac{2^j}{r}\Big(\Big(\frac{\theta}{2}\Big)^j\frac{r_2^2}{r}+\epsilon \theta^j\frac{r}{r_2}+\frac{r^3}{(8\theta)^j}+\frac{r_2 r}{4^j}\Big) \\
& \lesssim \Big(\frac{r_2}{r}\Big)^2+\epsilon\frac{1}{r_2} +r^2+r_2,
\end{align*}
and the result is proved.
\end{proof}

Finally, an optimization argument in the above construction yields the proof of Theorem \ref{thm:upper}.

\begin{proof}[Proof of Theorem \ref{thm:upper}]
An optimization of $r_2$ and $r$ in terms of $\epsilon$ gives $r_2\sim r^2$ and $r\sim \epsilon^\frac{1}{4}$, yielding then
\begin{equation*}\label{eq:2ord-scaling}
E_\epsilon(u,\chi;\Omega) \lesssim \epsilon^\frac{1}{2}.
\end{equation*}
\end{proof}

Towards the setting of the full Dirichlet data, we observe that by a relatively straightforward argument, it is possible to obtain a sub-optimal upper bound construction.

\begin{rmk}
\label{rmk:subopt}
By a simple cut-off procedure towards $\p\Omega\cap\Big\{z_3=\pm\frac{1}{2}\Big\}$, of amplitude $r$ (thus giving energy contribution $r$) it is immediate to show that
$$
E_\epsilon(u_0,\chi_0;\Omega) \lesssim r+\Big(\frac{r_2}{r}\Big)^2+\frac{\epsilon}{r_2}
$$
which optimized gives $r_2\sim r^\frac{3}{2}$, $r\sim \epsilon^\frac{2}{5}$ and hence
$$
\epsilon^\frac{1}{2} \lesssim \inf_{u,\chi}E_\epsilon (u,\chi) \lesssim \epsilon^\frac{2}{5}.
$$

We highlight that, while the scaling $\epsilon^\frac{2}{5}$ is not matching the lower bound (which is expected to be optimal), still it improves the scaling of a simple double laminate, which scales as $\epsilon^\frac{1}{3}$.
\end{rmk}

\subsection{Proof of Theorem \ref{thm:upper2}} 
\label{sec:Thm4}
Last but not least, we give a short sketch of the proof of Theorem \ref{thm:upper2}. This relies on the construction from Section \ref{sec:first} together with the three-dimensional lamination construction which had been introduced in Step 1 in the proof of Proposition 6.3 in \cite{RT21}. 

\begin{proof}[Proof of Theorem \ref{thm:upper2}]
We begin by noting that  $e^{(2)}-e^{(3)} = \frac{3}{2}(e^{(A)}-e^{(B)})$, where $e^{(A)}, e^{(B)}$ are as in Section \ref{sec:grads}. We hence define
\begin{align*}
C^{(2)}:= \frac{3}{2}A + C, \ C^{(2)} = \frac{3}{2}B + C,
\end{align*}
with 
\begin{align*}
C = \begin{pmatrix} 1 & 0 & 0 \\ 0 & - \frac{1}{2} & 0 \\ 0 & 0 & - \frac{1}{2}
\end{pmatrix},
\end{align*}
and observe that 
\begin{align*}
e(C^{(2)}) = e^{(2)}, \ e(C^{(3)}) = e^{(3)}.
\end{align*}
Using the ideas from the construction from \cite[Proposition 6.3]{RT21}, we set
\begin{align*}
u(x_1,x_2,x_3) = \frac{3}{2} \tilde{u}(\max\{|z_1|,|z_3|\},z_2) + C x,
\end{align*}
where $z_j = z_j(x_1,x_2,x_3)$ for $j\in\{1,2,3\}$ are the coordinates from \eqref{eq:coordinates} and $\tilde{u}(z_1, z_2)$ denotes the deformation from Step 1 in Section \ref{sec:first}. By definition of $\tilde{u}$ the function $u$ then satisfies the desired boundary data.
Moreover, as in the first step of \cite[Proposition 6.3]{RT21} we infer that
\begin{align*}
E_{\epsilon}(u,\chi) \lesssim r^2 + \frac{\epsilon}{r},
\end{align*} 
which, by an optimization argument, yields the desired upper bound estimate with full Dirichlet data.
\end{proof}

\section*{Acknowledgements}
Both authors gratefully acknowledge funding by the Deutsche Forschungsgemeinschaft (DFG,
German Research Foundation) through SPP 2256, project ID 441068247, and by the Hausdorff Institute for Mathematics at the University of Bonn which is funded by the Deutsche Forschungsgemeinschaft (DFG, German Research Foundation) under Germany's Excellence Strategy – EXC-2047/1 – 390685813, as part of the Trimester Program on Mathematics for Complex Materials.

\bibliographystyle{alpha}
\bibliography{citations4}

\end{document}